\documentclass[reqno]{amsart}
\usepackage{enumerate}
\usepackage{mhequ}
\usepackage[margin=1.2in]{geometry}
\usepackage{amssymb,url,color, booktabs,nccmath}
\usepackage{mathrsfs}
\usepackage{enumitem}
\usepackage{graphicx}
\usepackage{tikz}
\usetikzlibrary{decorations.shapes}
\usepackage{color}
\usepackage[colorlinks=true]{hyperref}
\hypersetup{
    linkcolor=blue,          % color of internal links
    citecolor=red,        % color of links to bibliography
    filecolor=blue,      % color of file links
    urlcolor=cyan
}

\numberwithin{equation}{section}

\newtheorem{theorem}{Theorem}[section]

\newtheorem{proposition}[theorem]{Proposition}

\newtheorem{assumptioninner}{Assumption}

\newenvironment{assumption}
  {\begin{assumptioninner}}
  {\end{assumptioninner}}
\newenvironment{nouppercase}{%
  \renewcommand{\uppercasenonmath}[1]{}}{}
\def\dd{\,\mathrm{d}}
\def\geq{\geqslant}
\def\ge{\geqslant}
\def\leq{\leqslant}
\def\le{\leqslant}

\begin{document}

\title[Learning Equilibrium Expansions from Dynamics]{\LARGE Learning Equilibrium Fluctuation Expansions from Overdamped Langevin Dynamics}

\author[Lin Wang]{\large Lin Wang}
\address[L. Wang]{School of Mathematical Sciences, University of Chinese Academy of Sciences, Beijing 100049, China; Academy of Mathematics and Systems Science, Chinese Academy of Sciences, Beijing 100190, China.}
\email{wanglin2021@amss.ac.cn}

\author[Zhengyan Wu]{\large Zhengyan Wu}
\address[Z. Wu]{Department of Mathematics, Technische Universit\"at M\"unchen, Boltzmannstr. 3, 85748 Garching, Germany}
\email{wuzh@cit.tum.de}

\begin{abstract}
	We study higher-order small-noise fluctuation expansions for the overdamped Langevin dynamics in a quartic double-well potential. Assuming that the initial data admits a suitable expansion structure, we obtain a strong dynamical expansion of the trajectories, as well as an expansion of the laws with respect to smooth observables. We then investigate the long-time behavior of the expansion coefficients. In the scalar case $d=1$, each coefficient converges exponentially fast to a finite limit as $t\to\infty$. In contrast, for $d\ge 2$, the fluctuation expansion coefficients reflect the degeneracy of the manifold of minima, which in general prevents the existence of a finite long-time limit. Furthermore, by combining a multi-level induction with combinatorial arguments, we derive a recursive formula for the fluctuation expansion coefficients. This recursion shows that the long-time limits of these dynamical expansion coefficients coincide with those arising from the corresponding equilibrium expansions.

\end{abstract}

\subjclass[2020]{60H10, 41A60, 37H10}
\keywords{overdamped Langevin dynamics, small-noise expansion, long-time behavior, double-well potential}

\begin{nouppercase}
\maketitle
\end{nouppercase}

\setcounter{tocdepth}{1}
\tableofcontents

\section{Introduction}\label{sec-intro}
In both probability and statistics, the central limit theorem (CLT) plays a central role in characterizing fluctuation phenomena. Going beyond the CLT, higher-order fluctuation expansions provide a more accurate description of the associated approximation errors. Such refinements are commonly known as Edgeworth expansions and have become fundamental tools in probabilistic and statistical theory. In the study of stochastic differential equations (SDEs), under suitable scaling regimes, central limit fluctuations are typically available; it is therefore natural to investigate finer fluctuation structures beyond the Gaussian approximation.

In many situations, asymptotic expansions of the form 
\begin{equation}\label{invariantmeas-expansion}
\int F(x)\,\mu_\varepsilon(\mathrm{d} x) = B_0(F) + \varepsilon^{1/2} B_1(F) + \dots + \varepsilon^{n/2} B_n(F) + o(\varepsilon^{n/2}), \quad \varepsilon\to 0,
\end{equation}
for observables $F\in C^{n+1}$ with respect to a Gibbs-type equilibrium measure
$$
\mu_{\varepsilon}(\mathrm{d}x)=Z_{\varepsilon}^{-1} e^{-V(x)/\varepsilon}\dd x,
$$
attract significant interest. However, these measures are often difficult to compute explicitly. A common strategy, reminiscent of Monte Carlo methods, is to approximate such equilibrium expectations through the trajectories of an associated stochastic dynamics, for instance the overdamped Langevin dynamics with $\mu_\varepsilon$ as its invariant measure. From this perspective, understanding the fluctuation structure of the dynamics provides a way to ``learn'' refined information about equilibrium statistics from dynamical observations.

In this paper, we establish higher-order small-noise fluctuation expansions for the overdamped Langevin dynamics and analyze the long-time behavior of the corresponding expansion coefficients. In particular, the long-time asymptotics reveal a qualitative distinction between systems with nondegenerate minima and those whose potential exhibits degenerate manifolds of minima. Specifically, we consider
\begin{equation}\label{SDE-0}
\dd X_{\varepsilon}(t)=-\nabla V (X_{\varepsilon}(t))\,\dd t+\sqrt{2\varepsilon}\,\dd W_t, \quad X_{\varepsilon}(0)=\xi_\varepsilon,
\end{equation}
where $(W_t)_{t \ge 0}$ is a standard $d$-dimensional Brownian motion and the initial data $\xi_\varepsilon$ is an $\mathbb{R}^d$-valued measurable random variable independent of the Brownian motion $W$. 

For every $n\in \mathbb{N}$, under a matching asymptotic expansion assumption on $\xi_\varepsilon$, we establish a dynamical asymptotic expansion of the form
\begin{equation}\label{eq-dyna-exp}
X_{\varepsilon}=\bar{X}_0+\varepsilon^{1/2}\bar{X}_1+\varepsilon\,\bar{X}_2+\cdots+\varepsilon^{n/2} \bar{X}_n + o(\varepsilon^{n/2}), \quad \text{as } \varepsilon\to 0,
\end{equation}
where the fluctuation coefficients $\bar{X}_m$, $m=1,\dots,n$, are Wiener functionals determined by recursive equations introduced later. 

Moreover, based on \eqref{eq-dyna-exp}, we obtain an asymptotic expansion in the sense of distributions: for every $n\in \mathbb{N}$, $F\in C^{n+1}(\mathbb{R}^d)$, and $t>0$,
\begin{align}\label{eq-weak-exp-intro}
\int_{\mathbb{R}^d} F(x)\,\mathcal{L}_{X_{\varepsilon}(t)}(\mathrm{d}x)
= a_0(t,F) + \varepsilon^{1/2} a_1(t,F) + \cdots + \varepsilon^{n/2} a_n(t,F) + o(\varepsilon^{n/2}), \quad \varepsilon\to 0,
\end{align}
where $\mathcal{L}_{X_{\varepsilon}(t)}$ denotes the law of $X_{\varepsilon}(t)$. In the context of probability theory, expansion \eqref{eq-weak-exp-intro} goes beyond central limit fluctuations, providing control over errors at arbitrarily high orders. 

Building on the analysis of \eqref{invariantmeas-expansion} and motivated by interests in probabilistic sampling, it is natural to investigate the long-time behavior ($t\to\infty$) of the fluctuation sequences $(a_m(t,F))_{1\le m \le n}$ and to understand their relationship with the equilibrium fluctuations $(B_m(F))_{1\le m \le n}$ in \eqref{invariantmeas-expansion}. Furthermore, the scalar case $d=1$ and the vector case $d\ge 2$ correspond to nondegenerate and degenerate geometric structures of the potential, respectively, leading to qualitatively different dynamics. This naturally raises the question of how such structural differences influence the long-time behavior of the fluctuation expansions. To explore these aspects in greater detail, we propose the following concrete questions.

\begin{description}
    \item[Q1] Do the long-time limits ($t \to \infty$) of the fluctuation sequences $(a_m(t,F))$ exist? If so, what are the convergence rates? In particular, do these limits coincide with the equilibrium fluctuations; that is, does
$$
\lim_{t\to\infty} a_m(t,F) = B_m(F) \quad \text{for every } 1\le m \le n \text{ and every } F \in C^{n+1}(\mathbb{R}^d)\,?
$$

    \item[Q2] Can the sequences $(a_m(t,F))$ and $(B_m(F))$ be characterized explicitly via recursive formulas?

    \item[Q3] Do the answers to \textbf{Q1} and \textbf{Q2} differ between the scalar case $d=1$ and the vector case $d \ge 2$?
\end{description}
As a supplement to these questions, we emphasize that \textbf{Q1} is challenging. Heuristically, suppose that the long-time limits of $(a_m(t,F))_{1\le m\le n}$ exist, and denote them by $b_m(F) := \lim_{t \to \infty} a_m(t,F)$ for each $1\le m\le n$. The two families $(b_m(F))$ and $(B_m(F))$ correspond to different orders of limits:
\begin{align*}
    \text{Dynamical: } &\lim_{t\to\infty}\lim_{\varepsilon\to0} \frac{\mathbb{E}[F(X_\varepsilon(t))]-\sum_{k=0}^{m-1}\varepsilon^{k/2}a_k(t,F)}{\varepsilon^{m/2}} = b_m(F),\\
    \text{Equilibrium: } &\lim_{\varepsilon\to0}\lim_{t\to\infty} \frac{\mathbb{E}[F(X_\varepsilon(t))]-\sum_{k=0}^{m-1}\varepsilon^{k/2}B_k(F)}{\varepsilon^{m/2}} = B_m(F).
\end{align*}
In general, it is not clear whether these limits can be interchanged; this depends on the intricate long-term dynamics of $X_\varepsilon$ and the interplay between transition rates and the order of fluctuations. To address these challenges, we develop a careful multi-level induction method based on combinatorial arguments, which allows us to systematically answer \textbf{Q1}, \textbf{Q2}, and \textbf{Q3}.

\subsection{Statement of the main results}
 We begin by introducing some notation. Throughout the paper,  we denote by $\mathbb{N} = \{0, 1, 2, \dots\}$ the set of natural numbers and by $\mathbb{N}_+ = \{1, 2, \dots\}$ the set of positive integers. For $a, b \in \mathbb{R}$, we write $a\asymp b$ if $cb \le a \le Cb$ for some constants $c, C > 0$, and adopt the convention that all such generic constants may change from line to line.
For $x,y\in\mathbb{R}^d$, we denote by $x\cdot y$ their Euclidean inner product and by $|x|$ the associated norm.

For each $n\in \mathbb{N}_+$, let $C^n(\mathbb{R}^d)$ denote the space of real-valued functions on $\mathbb{R}^d$ with continuous derivatives up to order $n$.
For every $F\in C^n(\mathbb{R}^d)$, $x\in\mathbb{R}^d$, and $1\le i\le n$, the $i$-th derivative of $F$ at $x$ is a symmetric $i$-linear map acting on $i$ vectors $v_1, \cdots, v_i\in \mathbb{R}^d$,  defined by 
$$D^iF(x)(v_1,\dots,v_i)
:=\sum_{j_1,\dots,j_i=1}^d
\frac{\partial^i F}{\partial x_{j_1}\dots \partial x_{j_i}}(x)\,(v_1)_{j_1}\dots (v_i)_{j_i}.
$$
When all $i$ vectors are identical, we simply write
\begin{equation*}
D^iF(x)[v]^i:=D^iF(x)(v,\cdots,v).
\end{equation*}
The operator norm of $D^i F(x)$ is defined by
\begin{equation*}
\|D^i F(x)\| := \sup_{v_1, \dots, v_i \in \mathbb{R}^d \setminus \{0\}} \frac{|D^i F(x)(v_1, \dots, v_i)|}{|v_1|\cdots |v_i|}.
\end{equation*}
In the scalar-valued case $d=1$, we have $D^i F(x)=F^{(i)}(x).$

For every $n\in\mathbb{N}_+$ and $i\in\{1,2,\cdots,n\}$, let 
\begin{equation}\label{D-ni}
\mathcal{D}_i^{n}:=\{(j_1,\cdots,j_i)\in(\mathbb{N}_+)^i:j_1+\cdots+j_i=n\}
\end{equation}
 denote the set of all $i$-tuples of positive integers summing to $n$. 
 We adopt the convention that $\mathcal{D}_i^{m}=\varnothing$ whenever $i>m$ or $i\leq 0$ or $m\leq 0$, and that any sum over the empty set is $0$.
Our first main result is a higher-order dynamical fluctuation expansion for $X_\varepsilon$ in the small-noise regime $\varepsilon\to0$.
We assume that the initial data $\xi_\varepsilon$ admits an asymptotic expansion of the form
$$
\xi_\varepsilon = \xi_0 + \varepsilon^{1/2}\xi_1 + \varepsilon\xi_2 + \dots + \varepsilon^{n/2}\xi_n + o\bigl(\varepsilon^{n/2}\bigr),
$$ in the following sense.
\begin{assumption}\label{A1}
    There exist random variables $\xi_0, \xi_1, \dots, \xi_n$ such that, for every $p \ge 1$ and $0 \le k \le n$:
\begin{equation*}
\mathbb{E}|\xi_k|^{2p} < \infty, \quad
\mathbb{E}\left|\xi_\varepsilon - \sum_{j=0}^k \varepsilon^{j/2}\xi_j \right|^{2p} \le C(k,p)\varepsilon^{(k+1)p}.
\end{equation*}
\end{assumption}

The expansion coefficients $\bar X_k$ are determined recursively by substituting \eqref{eq-dyna-exp}
into  \eqref{SDE-0} and matching powers of $\varepsilon^{1/2}$. The leading term $\bar X_0$ solves the deterministic ODE with random initial data:
\begin{equation}\label{eq-barX0-intro}
	\dd\bar X_0(t)=\bigl(\bar X_0(t)-|\bar X_0(t)|^2\bar X_0(t)\bigr)\dd t,\qquad 
	\bar X_0(0)=\xi_0.
\end{equation}
The first-order fluctuation $\bar X_1$ satisfies a linear SDE driven by the Brownian motion $W_t$:
\begin{equation}\label{eq-barX1-intro}
	\dd\bar X_1(t)=\bigl[(1-|\bar X_0(t)|^2)I-2\bar X_0(t)\otimes\bar X_0(t)\bigr]\bar X_1(t)\dd t
	+\sqrt{2}\dd W_t,\qquad \bar X_1(0)=\xi_1.
\end{equation}
For every $m\ge2$, the coefficient $\bar X_m$ solves a linear inhomogeneous equation whose forcing term collects all contributions of total order $m$ from the nonlinearity $|x|^2x$:
\begin{equation}\label{eq-barXm-intro}
	\begin{aligned}
		\dd\bar X_m(t) &=\bigl[(1-|\bar X_0(t)|^2)I-2\bar X_0(t)\otimes\bar X_0(t)\bigr]\bar X_m(t)\dd t \\
		&\quad -\sum_{(i,j)\in\mathcal{D}_2^{m}}
		\Bigl[(\bar X_i(t)\cdot\bar X_j(t))\bar X_0(t)+2(\bar X_i(t)\cdot\bar X_0(t))\bar X_j(t)\Bigr]\dd t \\
		&\quad -\sum_{(i,j,k)\in\mathcal{D}_3^{m}}
		(\bar X_i(t)\cdot\bar X_j(t))\bar X_k(t)\dd t,\qquad 
		\bar X_m(0)=\xi_m.
	\end{aligned}
\end{equation}

Throughout the remainder of this paper, we fix an arbitrary $n \in \mathbb{N}$ as the expansion order.   With these notations, we have the following remainder estimate for the expansion \eqref{eq-dyna-exp}. 
\begin{proposition}[Dynamical fluctuation expansion] \label{thm-dyna-exp-intro} Let  $\varepsilon\in (0,1)$ and assume that $X_\varepsilon$ is the solution of \eqref{SDE-0}  with initial data $\xi_{\varepsilon}$ satisfying Assumption \ref{A1}. Let $(\bar{X}_m)_{0\leq m\leq n}$ be the solution of \eqref{eq-barX0-intro}, \eqref{eq-barX1-intro}, \eqref{eq-barXm-intro}, respectively. We define the remainder terms as $$
w_{\varepsilon,m}(t)
:=\frac{X_\varepsilon(t)-\sum_{k=0}^{m}\varepsilon^{k/2}\bar X_k(t)}{\varepsilon^{m/2}},
\qquad 0\le m\le n,\,\,t>0.
$$ 
Then for each $p\ge1$ and $t>0$, there exists a constant $C(p,t,n)>0$, independent of $\varepsilon$, such that for all $m=0,1,\cdots,n$,
\begin{equation*}
	\mathbb{E}\bigl|w_{\varepsilon,m}(t)\bigr|^{2p}\leq C(p,t,n)\varepsilon^{p}.
\end{equation*}
\end{proposition}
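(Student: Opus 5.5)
The plan is induction on $m$, using the equation satisfied by the remainder together with moment bounds for $X_\varepsilon$ and the $\bar X_k$. Write $b(x)=x-|x|^2x=-\nabla V(x)$ for the drift appearing in \eqref{eq-barX0-intro}.

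\textbf{A priori moment bounds.} First I establish, for every $p\ge1$ and $T>0$,
\[
\sup_{\varepsilon\in(0,1)}\mathbb{E}\sup_{t\le T}|X_\varepsilon(t)|^{2p}<\infty ,
\qquad
\mathbb{E}\sup_{t\le T}|\bar X_k(t)|^{2p}<\infty,\quad 0\le k\le n .
\]
For $X_\varepsilon$, apply It\^o's formula to $|X_\varepsilon|^{2p}$. The drift is dissipative, since $x\cdot b(x)=|x|^2-|x|^4\le C$, and Assumption \ref{A1} controls the initial moments. For $\bar X_0$, the same computation gives $|\bar X_0(t)|\le \max(|\xi_0|,1)$ deterministically in $t$. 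For $k\ge1$, each $\bar X_k$ solves a linear equation. Its coefficient matrix $A(t)=(1-|\bar X_0|^2)I-2\bar X_0\otimes\bar X_0$ satisfies $v\cdot A(t)v\le |v|^2$, so the fundamental solution has norm at most $e^{t}$. The forcing in \eqref{eq-barXm-intro} is a polynomial in $\bar X_0,\dots,\bar X_{k-1}$, so induction on $k$ with H\"older's inequality gives all moments.

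\textbf{Base case $m=0$.} Here $w_{\varepsilon,0}=X_\varepsilon-\bar X_0$ satisfies
\[
\mathrm{d}w_{\varepsilon,0}=\bigl(b(X_\varepsilon)-b(\bar X_0)\bigr)\mathrm{d}t+\sqrt{2\varepsilon}\,\mathrm{d}W .
\]
I will use the one-sided Lipschitz property $(x-y)\cdot(b(x)-b(y))\le |x-y|^2$, which follows from monotonicity of $x\mapsto|x|^2x$ (it is the gradient of the convex function $|x|^4/4$). It\^o's formula for $|w_{\varepsilon,0}|^{2p}$ then gives
\[
\frac{\mathrm{d}}{\mathrm{d}t}\mathbb{E}|w|^{2p}\le C\,\mathbb{E}|w|^{2p}+C\varepsilon\,\mathbb{E}|w|^{2p-2}.
\]
Young's inequality and Gronwall, with $\mathbb{E}|w(0)|^{2p}\le C\varepsilon^p$ from Assumption \ref{A1}, yield the bound $C\varepsilon^p$.

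\textbf{Inductive step $m\ge1$.} The remainder $R_m:=\varepsilon^{m/2}w_{\varepsilon,m}=X_\varepsilon-\sum_{k\le m}\varepsilon^{k/2}\bar X_k$ must be shown to satisfy $\mathbb{E}|R_m|^{2p}\le C\varepsilon^{(m+1)p}$. Set $S_m=\sum_{k\le m}\varepsilon^{k/2}\bar X_k$. The noise cancels against the $\bar X_1$ equation, so
\[
\mathrm{d}R_m=\Bigl(b(X_\varepsilon)-b(S_m)\Bigr)\mathrm{d}t+\Bigl(b(S_m)-\sum_{k\le m}\varepsilon^{k/2}\,\mathrm{drift}_k\Bigr)\mathrm{d}t .
\]
Since $b$ is a cubic polynomial, $b(S_m)$ expands exactly in powers of $\varepsilon^{1/2}$. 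By construction of \eqref{eq-barX0-intro}--\eqref{eq-barXm-intro}, the drifts $\mathrm{drift}_k$ match all terms of order $\le m$; this is the combinatorial check that the index sets $\mathcal D^k_2,\mathcal D^k_3$ enumerate exactly the order-$k$ contributions of $|x|^2x$. The residual is therefore a finite sum of terms $\varepsilon^{(i+j+l)/2}(\bar X_i\cdot\bar X_j)\bar X_l$ with $i+j+l\ge m+1$. Its $2p$-th moments are $O(\varepsilon^{(m+1)p})$ by the a priori bounds.

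For the first term I again use one-sided Lipschitz: $R_m\cdot(b(X_\varepsilon)-b(S_m))\le|R_m|^2$. It\^o's formula on $|R_m|^{2p}$ has no martingale term, because $R_m$ has no noise. Hence
\[
\frac{\mathrm{d}}{\mathrm{d}t}\mathbb{E}|R_m|^{2p}\le C\,\mathbb{E}|R_m|^{2p}+C\,\mathbb{E}|\mathrm{residual}|^{2p}.
\]
The initial condition gives $\mathbb{E}|R_m(0)|^{2p}\le C\varepsilon^{(m+1)p}$ by Assumption \ref{A1}, and Gronwall concludes. This argument actually does not need the induction hypothesis; it does need $m\le n$ so that the coefficients $\xi_k$ are available.

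\textbf{Main obstacle.} The key step is the exact algebraic bookkeeping showing that $b(S_m)$ minus the coefficient drifts leaves only terms of order $\ge m+1$. The analytic part is routine once one-sided Lipschitz replaces global Lipschitz, which fails for the cubic nonlinearity.
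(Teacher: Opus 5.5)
Your proof is correct, but it is organized differently from the paper's.

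\textbf{The paper's route.} It runs a genuine induction on $m$. For $m\ge1$ it writes a linear equation for the normalized remainder $w_{\varepsilon,m}$. The forcing in that equation consists of lower-order remainders multiplied by negative powers of $\varepsilon$, for instance $\varepsilon^{-1/2}(\bar X_0\cdot w_{\varepsilon,0})w_{\varepsilon,m-1}$ and $\varepsilon^{-1}|w_{\varepsilon,0}|^2w_{\varepsilon,m-2}$. These are absorbed by Young's inequality, using the inductive hypothesis for high moments such as $\varepsilon^{-3p}\mathbb{E}|w_{\varepsilon,0}|^{8p}$.

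\textbf{Your route.} You treat each $m$ separately through the unnormalized remainder $R_m=\varepsilon^{m/2}w_{\varepsilon,m}$, so your initial phrase ``induction on $m$'' is really only an induction inside the moment bounds for $\bar X_k$. The noise cancels, and $b(X_\varepsilon)-b(S_m)$ is controlled by monotonicity of $x\mapsto|x|^2x$. Because $b$ is cubic, the mismatch $b(S_m)-\sum_{k\le m}\varepsilon^{k/2}\mathrm{drift}_k$ is exactly $-\sum\varepsilon^{(i+j+l)/2}(\bar X_i\cdot\bar X_j)\bar X_l$, summed over $0\le i,j,l\le m$ with $i+j+l\ge m+1$.

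The bookkeeping you flag as the main obstacle does check out:
\begin{itemize}
\item triples with two zero indices give the linearization $|\bar X_0|^2\bar X_k+2(\bar X_0\cdot\bar X_k)\bar X_0$;
\item triples with one zero index give the $\mathcal D_2^k$ sum with its factor $2$;
\item triples with no zero index give the $\mathcal D_3^k$ sum in \eqref{eq-barXm-intro}.
\end{itemize}

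\textbf{What each approach buys.} Your argument is shorter and fully explicit. It yields a pathwise Gronwall inequality and needs only the moment bounds of Proposition~\ref{prop-lp-estimate}, uniformly on $[0,t]$ as you note. It avoids the remainder equation that the paper imports from earlier work and only partially writes out (``other terms can be handled analogously''). Your $m=0$ step is also cleaner: it replaces the paper's ad hoc absorption of $3|\bar X_0\cdot w_{\varepsilon,0}||w_{\varepsilon,0}|^{2p}$ by the same monotonicity.

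In return, the paper's organization confronts the full nonlinearity only at $m=0$. In its inductive steps only the linearization along $\bar X_0$ appears, with forcing built from already-controlled remainders. Your argument instead uses the global monotonicity of $|x|^2x$ (semiconvexity of $V$) at every order.
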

 The proof of Proposition \ref{thm-dyna-exp-intro} yields an estimate uniformly on compact time intervals: for every $T>0$ and $p\ge1$, there exists a constant $C(p,T,n)>0$, independent of $\varepsilon$, such that
\[
\sup_{0\le t\le T}\mathbb{E}\bigl|w_{\varepsilon,m}(t)\bigr|^{2p}
\le C(p,T,n)\varepsilon^p,
\quad 0\le m\le n,
\] and therefore provides a strong dynamical expansion in $L^{2p}$ of $X_\varepsilon$ on finite time intervals. Our second main result is a corresponding weak expansion \eqref{eq-weak-exp-intro} for smooth observables.

\begin{proposition}[Weak expansion]\label{thm-weak-exp-intro}
	Let  $\varepsilon\in (0,1)$, and let $X_\varepsilon$,  $(\bar{X}_m)_{0\leq m\leq n}$ be as in Proposition \ref{thm-dyna-exp-intro}.
    Suppose $F \in C^{n+1}(\mathbb{R}^d)$ satisfies a polynomial growth condition, i.e., there exists $q > 0$ such that \begin{equation}\label{assump-F}
    \max_{0\le i \le n+1} \|D^i F(x)\| \leq C(1+|x|^q), \quad\text{for all } x \in \mathbb{R}^d.
    \end{equation}
We define $a_0(t,F):=\mathbb{E}\big[F(\bar X_0(t))\big]$, and for each $1 \leq m \leq n$,
	\begin{equation}\label{eq-an-intro}
		a_m(t,F):=\mathbb{E}\Bigg[\sum_{i=1}^{m}\frac{1}{i!}	\sum_{(j_1,\dots,j_i)\in\mathcal{D}_i^{m}}
		D^iF\big(\bar X_0(t)\big)\big(\bar X_{j_1}(t),\dots,\bar X_{j_i}(t)\big)
		\Bigg],
	\end{equation}
    where $\mathcal{D}_i^{m}$ is defined in \eqref{D-ni}.
Then for each $t>0$, there exists a constant $C(t,n,F)>0$, independent of $\varepsilon$, such that for all $m=0,1,\cdots,n$,
	\begin{equation*}
		\left|\frac{\mathbb{E}\big[F(X_\varepsilon(t))\big]
		-\sum_{k=0}^m\varepsilon^{k/2}a_k(t,F)}{\varepsilon^{m/2}}\right|\leq C(t, n, F) \varepsilon^{1/2}.
	\end{equation*}
\end{proposition}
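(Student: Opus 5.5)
We derive the weak expansion from the strong expansion of Proposition \ref{thm-dyna-exp-intro} by a Taylor expansion of $F$ around $\bar X_0(t)$. Fix $t>0$ and $0\le m\le n$, and set $Y_\varepsilon:=\sum_{k=1}^{m}\varepsilon^{k/2}\bar X_k(t)$. By definition of the remainder, $X_\varepsilon(t)=\bar X_0(t)+Y_\varepsilon+\varepsilon^{m/2}w_{\varepsilon,m}(t)$.

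\textbf{Step 1: split off the remainder.} We write
\[
\mathbb{E}F(X_\varepsilon(t))-\mathbb{E}F(\bar X_0(t)+Y_\varepsilon)=\mathbb{E}\int_0^1 DF\bigl(\bar X_0+Y_\varepsilon+\theta\varepsilon^{m/2}w_{\varepsilon,m}\bigr)\bigl(\varepsilon^{m/2}w_{\varepsilon,m}\bigr)\dd\theta .
\]
The growth condition \eqref{assump-F} and Hölder's inequality bound this by $C\varepsilon^{m/2}\bigl(1+\mathbb{E}|X_\varepsilon(t)|^{2q}+\mathbb{E}|\bar X_0|^{2q}+\mathbb{E}|Y_\varepsilon|^{2q}\bigr)^{1/2}\bigl(\mathbb{E}|w_{\varepsilon,m}|^2\bigr)^{1/2}$, which is at most $C\varepsilon^{(m+1)/2}$ by Proposition \ref{thm-dyna-exp-intro} with $p=1$.

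For this we need uniform moment bounds.
\begin{enumerate}[label=(\alph*)]
\item $\sup_{\varepsilon\in(0,1)}\mathbb{E}|X_\varepsilon(t)|^{r}<\infty$: apply Itô's formula to $|X_\varepsilon|^{r}$. The drift satisfies $x\cdot(x-|x|^2x)\le |x|^2-|x|^4$, which is dissipative, and Assumption \ref{A1} controls the initial moments.
\item $\mathbb{E}|\bar X_k(t)|^r<\infty$ for every $k$: for $\bar X_0$ the ODE gives $|\bar X_0(t)|\le\max(1,|\xi_0|)$. For $k\ge1$, the equations \eqref{eq-barX1-intro}--\eqref{eq-barXm-intro} are linear with a coefficient matrix bounded by $C(1+|\xi_0|^2)$. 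A Gronwall estimate then gives $|\bar X_k(t)|\le e^{C(1+|\xi_0|^2)t}(\cdots)$.
\end{enumerate}

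Item (b) is the step I expect to be the main obstacle. A naive Gronwall bound produces $e^{C|\xi_0|^2t}$, which need not be integrable. The way around it is to use the structure of the linearization. The symmetric matrix $(1-|\bar X_0|^2)I-2\bar X_0\otimes\bar X_0$ has largest eigenvalue at most $1$, so $\frac{d}{dt}|\bar X_k|^2\le 2|\bar X_k|^2+2|\bar X_k||\text{forcing}_k|$. Hence $|\bar X_k(t)|\le e^{t}\bigl(|\xi_k|+\int_0^t|\text{forcing}_k|\bigr)$, which is deterministic-in-$\xi_0$ exponential growth only. By induction on $k$, each forcing term is a polynomial in $\bar X_0,\dots,\bar X_{k-1}$, and for $k=1$ the noise term is handled by working with $\bar X_1-\sqrt2 W_t$. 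This yields all moments of every $\bar X_k(t)$. The same bounds are presumably already established in the proof of Proposition \ref{thm-dyna-exp-intro}, and we would cite them from there.

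\textbf{Step 2: Taylor expand $F(\bar X_0+Y_\varepsilon)$.} Expanding to order $m$ with integral remainder gives
\[
F(\bar X_0+Y_\varepsilon)=\sum_{i=0}^{m}\frac1{i!}D^iF(\bar X_0)[Y_\varepsilon]^i+R_{m},\qquad |R_m|\le C\sup_{\theta\in[0,1]}\|D^{m+1}F(\bar X_0+\theta Y_\varepsilon)\|\,|Y_\varepsilon|^{m+1}.
\]
This uses $F\in C^{m+1}$, which holds since $m\le n$. By the moment bounds and $|Y_\varepsilon|\le\varepsilon^{1/2}\sum_k|\bar X_k|$, we get $\mathbb{E}|R_m|\le C\varepsilon^{(m+1)/2}$.

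Next, multilinearity expands $D^iF(\bar X_0)[Y_\varepsilon]^i$ as
\[
\sum_{1\le j_1,\dots,j_i\le m}\varepsilon^{(j_1+\cdots+j_i)/2}D^iF(\bar X_0)(\bar X_{j_1},\dots,\bar X_{j_i}).
\]
We group these terms by total order $\ell=j_1+\dots+j_i$.
\begin{itemize}
\item Terms with $\ell\le m$ have all $j_r\le m$ automatically, so they are exactly the tuples in $\mathcal D_i^\ell$ (note $i\le\ell$). Taking expectations, these sum to $\sum_{\ell=0}^m\varepsilon^{\ell/2}a_\ell(t,F)$, as in \eqref{eq-an-intro}.
\item Terms with $\ell\ge m+1$ are finitely many. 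Each is bounded in $L^1$ by $C\varepsilon^{(m+1)/2}$, again using polynomial growth of $D^iF$ and the moment bounds.
\end{itemize}

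Combining Steps 1 and 2 gives
\[
\Bigl|\mathbb{E}F(X_\varepsilon(t))-\sum_{k=0}^m\varepsilon^{k/2}a_k(t,F)\Bigr|\le C(t,n,F)\,\varepsilon^{(m+1)/2}.
\]
Dividing by $\varepsilon^{m/2}$ yields the claim.
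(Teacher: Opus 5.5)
Your proof is correct and follows essentially the paper's route. You Taylor-expand $F$ around $\bar X_0(t)$, group the multilinear terms by total order to identify the $a_k(t,F)$, and close with the strong remainder bound of Proposition \ref{thm-dyna-exp-intro} and the moment bounds of Proposition \ref{prop-lp-estimate}, which are exactly your item (b) and are proved there by the same observation that $(1-|\bar X_0|^2)I-2\bar X_0\otimes\bar X_0\le I$.

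The only difference is bookkeeping. You first peel off $\varepsilon^{m/2}w_{\varepsilon,m}$ by a mean-value estimate, which needs only $\mathbb{E}|w_{\varepsilon,m}|^2\le C\varepsilon$, and then expand in $Y_\varepsilon$ alone. The paper instead Taylor-expands in $w_{\varepsilon,0}=Y_\varepsilon+\varepsilon^{m/2}w_{\varepsilon,m}$ and must bound the extra mixed terms containing $w_{\varepsilon,m}$ in $\widetilde R_m^\varepsilon$; your ordering avoids those terms.
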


Our third main result concerns the long-time behavior of the expansion coefficients and provides answers to the questions \textbf{Q1}-\textbf{Q3} formulated above.

In the one-dimensional case, under a natural non-degeneracy assumption on the leading-order initial data (see Assumption \ref{A2} below), we obtain a complete description of the long-time limits of the coefficients $(a_m(t,F))$ in terms of an explicit recursive relation. Moreover, under the assumption that $\mathbb{P}(\xi_0>0)=\mathbb{P}(\xi_0<0)=1/2$, we show that the long-time limits of these dynamical fluctuation coefficients coincide with the equilibrium fluctuation coefficients, thus providing positive answers to \textbf{Q1} and \textbf{Q2} in the scalar case.

In contrast, in the vector-valued case $d \ge 2$, we construct a smooth observable $F$ for which the second-order dynamical coefficient $a_2(t, F)$ fails to converge as $t \to \infty$. Thus, \textbf{Q1} has a negative answer when $d\geq 2$, and the long-time behavior of the fluctuation expansion is genuinely different in the scalar and vector cases, thereby answering \textbf{Q3}.

When studying the long-time behavior, we further impose the following assumption, which excludes the unstable equilibrium at $x=0$.

\begin{assumption}\label{A2}
There exist constants $ r_{max}> r_{min} > 0$ such that
\begin{equation*}
r_{min} < |\xi_0| < r_{max} \quad \text{almost surely.}
\end{equation*}
\end{assumption}
\begin{theorem}[Long-time behavior in the scalar case]\label{thm-longtime-intro}
	Assume $d=1$. Let $\varepsilon\in(0,1)$ and let $X_\varepsilon$,  $(\bar{X}_m)_{0\leq m\leq n}$,  $(a_m)_{0\leq m\leq n}$ be as in Proposition \ref{thm-weak-exp-intro}. If we further assume that Assumption \ref{A2} holds, then for every $F \in C^{n+1}(\mathbb{R})$ satisfying \eqref{assump-F}, the following assertions hold:\begin{enumerate}

\item[(i)] The limit
$$
b_0(F) := \lim_{t\to\infty} a_0(t,F)
$$
exists, and satisfies
\begin{equation}\label{eq-lim-a0}
b_0(F) = \mathbb{E}\Bigl[ \lim_{t\to\infty} F\bigl(\bar{X}_0(t)\bigr) \Bigr]
= F(1)\mathbb{P}(\xi_0>0) + F(-1)\mathbb{P}(\xi_0<0).
\end{equation}
\item[(ii)] For each $m \in \{1, \dots, n\}$ and $i \in \{1, \dots, m\}$, define
$$
S_{m,i}(t) := \sum_{(j_1,\dots,j_i)\in\mathcal{D}_i^m} \prod_{k=1}^{i}\bar{X}_{j_k}(t),\quad t\ge0.
$$
We adopt the convention that a sum over the empty set is equal to $0$, and hence
$$
S_{m,i}(t):=0
\quad\text{whenever }m<0,\ i<0,\ \text{or } i>m.
$$Then the conditional limits
$$
c_{m,i} := \lim_{t \to \infty}\mathbb{E}[S_{m,i}(t)\mid\xi_0>0],
\quad
\bar{c}_{m,i} := \lim_{t \to \infty}\mathbb{E}[S_{m,i}(t)\mid\xi_0<0]
$$
exist and are uniquely determined by the recursion formula
\begin{equation}\label{eq-c-n-intro}
    c_{m,i} = \frac{i-1}{2}c_{m-2,i-2} - \frac{3}{2}c_{m,i+1} - \frac{1}{2}c_{m,i+2}, 
    \quad 
    \bar c_{m,i} = \frac{i-1}{2}\bar c_{m-2,i-2} + \frac{3}{2}\bar c_{m,i+1} - \frac{1}{2}\bar c_{m,i+2},
\end{equation}
with $c_{1,1}=\bar c_{1,1}=0$, $c_{2,2}=\bar c_{2,2}=\frac12$ and the convention that $c_{0,0}=\bar c_{0,0}=1$, and
		$$c_{k,j}=\bar c_{k,j}=0 \quad\text{whenever } (k,j)\notin \{(0,0)\}\cup\{(k,j): k\ge1,\ 1\le j\le k\}.$$ 
The recursion \eqref{eq-c-n-intro} is understood as a nested induction: one first increases the order $m$, and for each fixed $m$, computes $c_{m,i}$ backward in $i$ from $i=m$ to $1$. The same procedure applies to $\bar c_{m,i}$.

\item[(iii)] For each $m \in \{1, \dots, n\}$, the limit
\begin{equation}\label{eq-b-n}
b_m(F) := \lim_{t \to \infty} a_m(t,F)
\end{equation}
exists and is given by
\begin{equation}\label{eq-b-n-intro}
    b_m(F) = \mathbb{P}(\xi_0>0)\sum_{i=1}^{m} \frac{c_{m,i}}{i!} F^{(i)}(1)
    + \mathbb{P}(\xi_0<0)\sum_{i=1}^{m} \frac{\bar{c}_{m,i}}{i!}F^{(i)}(-1).
\end{equation}
In particular, $b_m(F)=0$ for all odd $m$.
\end{enumerate}
\end{theorem}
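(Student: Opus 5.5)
Part (i) is elementary. In $d=1$ the ODE \eqref{eq-barX0-intro} reads $\dot x=x-x^3$ and can be solved explicitly:
\[
\bar X_0(t)^2=\frac{\xi_0^2e^{2t}}{1-\xi_0^2+\xi_0^2e^{2t}}.
\]
By Assumption \ref{A2}, $\bar X_0(t)\to\operatorname{sgn}(\xi_0)$ exponentially fast, uniformly in $\omega$. Moreover $|\bar X_0(t)|\le\max(1,r_{max})$ for all $t$. Dominated convergence together with the growth bound \eqref{assump-F} then gives \eqref{eq-lim-a0}.

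For (ii)--(iii) I will work conditionally on $\{\xi_0>0\}$; the case $\xi_0<0$ follows by the symmetry $x\mapsto -x$, which flips the sign of the cubic drift's even-order coupling. I set $\lambda(t):=1-3\bar X_0(t)^2$, which tends to $-2$ exponentially fast. In $d=1$ the equations \eqref{eq-barX1-intro}--\eqref{eq-barXm-intro} collapse to
\[
d\bar X_m=\lambda\bar X_m\,dt-3\bar X_0 S_{m,2}\,dt-S_{m,3}\,dt+\sqrt2\,\delta_{m1}dW_t .
\]
The key step is to apply Itô's formula to the products $\prod_{k}\bar X_{j_k}$ and sum over $\mathcal D_i^m$. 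This yields a closed linear system of ODEs for $E_{m,i}(t):=\mathbb E[S_{m,i}(t)\mid\xi_0>0]$.

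Here is the combinatorial identity I expect. Differentiating $S_{m,i}$ replaces one factor $\bar X_{j}$ by its drift. The $\lambda\bar X_j$ parts give $i\lambda S_{m,i}$. The $-3\bar X_0\sum_{(a,b)\in\mathcal D_2^{j}}$ parts recombine into $-3\bar X_0\, i\, S_{m,i+1}$, because summing over the position and the split of one index of a composition into two equals $i$ times the sum over $(i+1)$-compositions. Similarly the cubic term gives $-i\,S_{m,i+2}$. The Itô correction arises only from pairs of $\bar X_1$ factors, giving $\binom{i}{2}\cdot 2\cdot S_{m-2,i-2}$. Hence
\[
\tfrac{d}{dt}E_{m,i}=i\lambda E_{m,i}-3iE[\bar X_0S_{m,i+1}]-iE_{m,i+2}+i(i-1)E_{m-2,i-2}.
\]
In the limit, with $\lambda\to-2$ and $\bar X_0\to1$, the stationary equation $0=-2ic_{m,i}-3ic_{m,i+1}-ic_{m,i+2}+i(i-1)c_{m-2,i-2}$ is exactly \eqref{eq-c-n-intro} after dividing by $2i$.

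The main obstacle is justifying convergence. I will run a nested induction: upward in $m$, and for fixed $m$ downward in $i$ from $i=m$, where $S_{m,m}=\bar X_1^m$ and the right-hand side involves only lower $m$ or higher $i$. Before that, I need uniform-in-time moment bounds $\sup_t\mathbb E|\bar X_m(t)|^{p}<\infty$. These follow from the variation-of-constants formula using $\int_s^t\lambda\le -2(t-s)+C$ (uniform in $\omega$ by Assumption \ref{A2}), again by induction on $m$. Then $\bar X_0 S_{m,i+1}=S_{m,i+1}+(\bar X_0-1)S_{m,i+1}$, where the second term is $O(e^{-ct})$ by Cauchy--Schwarz. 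So $E_{m,i}$ solves $\dot E=-2iE+g(t)$ with $g(t)\to g_\infty$ exponentially, by the induction hypothesis. It follows that $E_{m,i}\to g_\infty/(2i)$ exponentially fast. Uniqueness is immediate from the triangular structure of the recursion. The initial values are consistent: $c_{1,1}=0$ since $\dot E=-2E+o(1)$, and $c_{2,2}=\tfrac12$ from the Itô term $2\cdot\tfrac{1}{4}$.

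For (iii), I write $a_m=\sum_i\frac1{i!}\mathbb E[F^{(i)}(\bar X_0)S_{m,i}]$. I replace $F^{(i)}(\bar X_0(t))$ by $F^{(i)}(\pm1)$ at an $O(e^{-ct})$ cost, using the mean value theorem, $F\in C^{n+1}$, polynomial growth and the moment bounds. Conditioning on the sign of $\xi_0$ then gives \eqref{eq-b-n-intro}.

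For the parity claim, I show by induction on the recursion that $c_{m,i}=\bar c_{m,i}=0$ whenever $m$ is odd. The recursion only couples $(m,i)$ with $(m-2,\cdot)$ and $(m,\cdot)$, which preserves the parity of $m$, and the base case is $c_{1,1}=0$.
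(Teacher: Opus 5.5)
Your argument is correct, and its skeleton is the paper's. Both use the It\^o computation for $S_{m,i}$, with the merging identities producing $iS_{m,i+1}$, $iS_{m,i+2}$ and $i(i-1)S_{m-2,i-2}$. Both use the nested induction (upward in $m$, downward in $i$) and a limiting linear equation with rate $-2i$.

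The difference is how you handle the random coefficients $\lambda(t)=1-3\bar X_0(t)^2$ and $\bar X_0(t)$.

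The paper conditions on $\sigma(\xi_0)$. Then $\bar X_0$ and the explicit integrating factor $\Lambda(t)=e^{2t}\bigl(\xi_0^2+(1-\xi_0^2)e^{-2t}\bigr)^{3/2}$ come out of the conditional expectation exactly. It then argues pathwise in $\xi_0$ using $\Lambda(t)^{-k}\int_0^t\Lambda(s)^k\,\mathrm{d}s\to 1/(2k)$. This needs no uniform-in-time moment bounds; Proposition \ref{prop-lp-estimate} only gives $t$-dependent constants.

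You instead condition on the event $\{\xi_0>0\}$, which is exactly the quantity defining $c_{m,i}$. You pay for this with an extra lemma, $\sup_{t\ge0}\mathbb{E}|\bar X_m(t)|^p<\infty$. It holds in $d=1$ because $\int_s^t\lambda\le -2(t-s)+C$ uniformly in $\omega$, and it is precisely what breaks down for $d\ge2$.

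That lemma lets you treat $(\bar X_0-1)S_{m,i+1}$ and $(\lambda+2)S_{m,i}$ as $O(e^{-2t})$ perturbations. As a by-product, your route gives exponential convergence of $\mathbb{E}[S_{m,i}(t)\mid\xi_0>0]$ and of $a_m(t,F)$ under Assumptions \ref{A1} and \ref{A2} alone. The paper's pointwise-in-$\xi_0$ rates (Theorem \ref{thm-rate-intro}) additionally use Assumption \ref{A3}.

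One slip to correct. Since $\lambda$ is not constant on $\{\xi_0>0\}$, your displayed equation should contain $i\,\mathbb{E}[\lambda S_{m,i}\mid\xi_0>0]$ rather than $i\lambda E_{m,i}$. Write it as $-2iE_{m,i}+i\,\mathbb{E}[(\lambda+2)S_{m,i}\mid\xi_0>0]$ and absorb the second term into $g$, exactly as you do for the $\bar X_0$ term.
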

Furthermore, each coefficient $a_m(t,F)$ converges exponentially fast to $b_m(F)$, under the following additional assumption on the initial data.
\begin{assumption}\label{A3}
   For each $m\in\{1,2,\cdots,n\}$ and $i\in\{1,2,\cdots,m\}$, there exist a constant $C>0$ depending on $m,i$, such that the conditional expectation
   \begin{equation*}
  \left|\sum_{(j_1,\dots,j_i)\in\mathcal{D}_i^m} \mathbb{E}\left[\prod_{k=1}^{i}\xi_{j_k}\mid \xi_0\right]\right|\leq C,\quad\text{ almost surely.}
   \end{equation*}
\end{assumption}
In particular, Assumption \ref{A3} is implied by Assumption \ref{A1} whenever
$\sigma(\xi_k, k\geq 1)$ is independent of $\sigma(\xi_0)$.
\begin{theorem}[Convergence rates of the long-time limits]\label{thm-rate-intro}
Under the assumptions of Theorem \ref{thm-longtime-intro} and Assumption \ref{A3}, for each $m\in\{1,\dots,n\}$ and $i\in\{1,\dots,m\}$, there exists a constant $C(m,i)>0$ such that, for all $t\ge0$, 
	\begin{equation}\label{eq-cov-rate-S-weak}
	\left|\mathbb E[S_{m,i}(t)\mid\xi_0]-c_{m,i}\right|\le C(m,i)e^{-t},
\quad\text{almost surely on }\{\xi_0>0\},
	\end{equation}
	and
	$$
	\left|\mathbb E[S_{m,i}(t)\mid\xi_0]-\bar c_{m,i}\right|\le C(m,i)e^{-t},
	\quad\text{almost surely on }\{\xi_0<0\}.
	$$
	Moreover, for each $m\in\{0,\dots,n\}$, there exists $C(m,F)>0$ such that, for all $t\ge0$,
	\begin{equation}\label{eq-cov-rate-weak}
	|a_m(t,F)-b_m(F)|\le C(m,F)e^{-t}.
	\end{equation}
\end{theorem}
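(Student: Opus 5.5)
We work conditionally on $\xi_0$ and treat the event $\{\xi_0>0\}$; the case $\{\xi_0<0\}$ follows from the symmetry $x\mapsto -x$ of the drift. For $d=1$ the leading term solves $\dot{\bar X}_0=\bar X_0-\bar X_0^3$, which is explicit:
\[
\bar X_0(t)^2=\frac{\xi_0^2e^{2t}}{1-\xi_0^2+\xi_0^2e^{2t}} .
\]
By Assumption \ref{A2} this gives $|\bar X_0(t)-1|\le Ce^{-2t}$ uniformly on $\{\xi_0>0\}$. Hence the linearization $\lambda(t):=1-3\bar X_0(t)^2$ satisfies $|\lambda(t)+2|\le Ce^{-2t}$, and the fundamental solution obeys $\Phi(t,s)\le Ce^{-2(t-s)}$ with $C$ uniform.

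Set $Y_m:=\bar X_m$ and write $u_{m,i}(t):=\mathbb E[S_{m,i}(t)\mid\xi_0]$. We apply Itô's formula to $\prod_k \bar X_{j_k}$ and sum over $\mathcal D_i^m$. For $d=1$, \eqref{eq-barXm-intro} reads
\[
\dd\bar X_m=\Bigl(\lambda\bar X_m-3\bar X_0\sum_{\mathcal D_2^m}\bar X_i\bar X_j-\sum_{\mathcal D_3^m}\bar X_i\bar X_j\bar X_k\Bigr)\dd t+\sqrt2\,\delta_{m1}\dd W .
\]
Inserting this into the product rule and using the combinatorics of compositions (replacing one part $j_k$ by $1$, $2$ or $3$ parts summing to it), we get
\[
\frac{\dd}{\dd t}u_{m,i}=i\lambda u_{m,i}-3(i+1)\cdots\bar X_0\,u_{m,i+1}-\cdots u_{m,i+2}+\binom{i}{2}\cdot 2\,u_{m-2,i-2}.
\]
The last term is the Itô correction: $\sqrt2\,\dd W$ acts only on the parts equal to $1$, and removing two such parts reduces $m$ by $2$ and $i$ by $2$. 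The key identity to check is the exact multiplicity count: each composition in $\mathcal D_{i+1}^m$ arises $i$ times from merging adjacent parts, so that after normalization the linear ODE becomes
\[
\dot u_{m,i}=i\lambda(t)\,u_{m,i}+g_{m,i}(t),
\]
where $g_{m,i}$ involves $u_{m,i+1}$, $u_{m,i+2}$ and $u_{m-2,i-2}$ with coefficients converging exponentially. Setting the derivative to zero in the limit ($\lambda\to-2$, $\bar X_0\to1$) gives $2i\,c_{m,i}=(i-1)i\,c_{m-2,i-2}-3i\,c_{m,i+1}-i\,c_{m,i+2}$, which is \eqref{eq-c-n-intro}. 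For $\xi_0<0$, $\bar X_0\to-1$ flips the sign of the $c_{m,i+1}$ coefficient. This algebra is the step I expect to be the main obstacle, together with making the conditional Itô computation rigorous (moments of $\bar X_j$ conditional on $\xi_0$, martingale terms having zero conditional mean, using the independence of $W$ and $\xi$ and the moment bounds underlying Proposition \ref{thm-dyna-exp-intro}).

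Convergence is then proved by nested induction as in the statement: induct upward in $m$, and for fixed $m$ downward in $i$ from $i=m$, where $u_{m,m+1}=u_{m,m+2}=0$. Suppose the forcing satisfies $|g_{m,i}(t)-g^\infty_{m,i}|\le Ce^{-t}$; this holds inductively, since the products of $e^{-2t}$ coefficient errors with bounded $u$'s, plus the inductive errors, are all $O(e^{-t})$. Then Duhamel's formula with $\Phi(t,s)\le Ce^{-2i(t-s)}$ yields
\[
|u_{m,i}(t)-g^\infty_{m,i}/(2i)|\le C\bigl(e^{-2it}|u_{m,i}(0)|+e^{-t}\bigr).
\]
The initial term $u_{m,i}(0)$ is bounded a.s.\ by Assumption \ref{A3}, and the uniformity of $C$ in $\xi_0$ comes from Assumption \ref{A2}. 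This gives \eqref{eq-cov-rate-S-weak}, and under Assumption \ref{A2} alone it gives the existence of the limits $c_{m,i}$ (via dominated convergence after conditioning on the sign). The values $c_{1,1}=0$ and $c_{2,2}=\tfrac12$ come out of the base cases: $\dot u_{1,1}=\lambda u_{1,1}$, and $\dot u_{2,2}=2\lambda u_{2,2}+2$ with limit $2/4=\tfrac12$.

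Finally, by \eqref{eq-an-intro},
\[
a_m(t,F)=\mathbb E\Bigl[\sum_i \tfrac1{i!}F^{(i)}(\bar X_0(t))\,u_{m,i}(t)\Bigr].
\]
We have $|F^{(i)}(\bar X_0(t))-F^{(i)}(\pm1)|\le Ce^{-2t}$ by the mean value theorem, since $F\in C^{n+1}$ and $\bar X_0$ stays in a compact set. Combining this with the rate for $u_{m,i}$ and splitting over $\{\xi_0\gtrless0\}$ gives \eqref{eq-cov-rate-weak}. For $m=0$ the bound follows directly from $|F(\bar X_0(t))-F(\pm1)|\le Ce^{-2t}$.
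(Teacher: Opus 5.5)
Your proposal is correct and follows essentially the paper's route: the explicit $\bar X_0$ with $e^{-2t}$ convergence, the Duhamel formula for $\mathbb E[S_{m,i}(t)\mid\xi_0]$ with kernel $\Lambda(t)^{-i}\Lambda(s)^i$, and a nested induction (upward in $m$, downward in $i$) with Assumption \ref{A3} controlling the initial term, followed by the mean-value estimate for $F^{(i)}(\bar X_0(t))$. One step needs more than you state: the upper bound $\Phi_i(t,s)\le Ce^{-2i(t-s)}$ does not by itself give the constant part of the forcing its limit $g^\infty_{m,i}/(2i)$ at rate $e^{-t}$, so you also need $\bigl|\int_0^t\Phi_i(t,s)\dd s-\frac1{2i}\bigr|\le Ce^{-t}$, which follows from your bound $|\lambda+2|\le Ce^{-2t}$ (producing a $te^{-2t}$ term when $i=1$) and is exactly the paper's estimate \eqref{eq-rate-Lambda}.
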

 Moreover, in this case, we can identify the long-time limits of the dynamical coefficients with the coefficients of the expansion for the invariant measure.
\begin{theorem}[Identification of coefficients, Proposition \ref{prop-invariant-exp}, Theorem \ref{thm-consistency}]\label{thm-eq-vs-inv}
Assume $d=1$. Let $\mu_\varepsilon$ denote the invariant measure of \eqref{SDE-0} and let $(b_m(F))_{0\le m\le n}$ be defined as in Theorem \ref{thm-longtime-intro}. Then for every $F\in C^{n+1}(\mathbb R)$ satisfying \eqref{assump-F}: 
\begin{enumerate}
\item[(i)] There exists $\varepsilon_0\in (0,1]$ and $ (B_m(F))_{0\leq m\leq n} $ such that for every $\varepsilon\in (0,\varepsilon_0)$,
\begin{equation*}
\int_{\mathbb R} F(x)\mu_\varepsilon(\mathrm{d}x)
= \sum_{m=0}^n \varepsilon^{m/2} B_m(F) + R_{n}(F,\varepsilon),
\end{equation*}
where the remainder satisfies
$$
\bigl|R_{n}(F,\varepsilon)\bigr|\le C(n,F)\varepsilon^{(n+1)/2},
$$
for some constant $C(n,F)>0$ independent of $\varepsilon$.
\item[(ii)]  If the initial law of $\xi_0$ satisfies
$$
\mathbb P(\xi_0>0)=\mathbb P(\xi_0<0)=\frac12,
$$
then for every $m\in\{0,\dots,n\}$, we have
$$
B_m(F)=b_m(F).
$$
\end{enumerate}
\end{theorem}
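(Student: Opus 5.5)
For part (i) I would use Laplace's method on the explicit density. With $d=1$ we have $V(x)=\tfrac14x^4-\tfrac12x^2$, so $\mu_\varepsilon(\mathrm dx)=Z_\varepsilon^{-1}e^{-V(x)/\varepsilon}\,\mathrm dx$ with $V(\pm1)=-\tfrac14$ and $V''(\pm1)=2$.

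The plan for (i) is as follows.

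1. Split $\mathbb R$ into neighbourhoods $(\pm1-\delta,\pm1+\delta)$ and their complement. On the complement $V\ge -\tfrac14+c\delta^2$. Using the polynomial growth \eqref{assump-F} and the quartic confinement, that region contributes $O(e^{-c/\varepsilon})$ relative to $Z_\varepsilon$.
2. Near $x=1$, substitute $x=1+\varepsilon^{1/2}y$. Then
\[
\frac{V(x)-V(1)}{\varepsilon}=y^2+\varepsilon^{1/2}y^3+\tfrac14\varepsilon y^4 .
\]
3. Expand $e^{-\varepsilon^{1/2}y^3-\varepsilon y^4/4}$ and $F(1+\varepsilon^{1/2}y)$ by Taylor to order $n$ in $\varepsilon^{1/2}$. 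Taylor's remainder gives terms $\varepsilon^{(n+1)/2}|y|^{n+1}$ times polynomial growth, integrable against $e^{-y^2/2}$ on $|y|\le\delta\varepsilon^{-1/2}$ once $\delta$ is small.
4. Do the same at $x=-1$; by the symmetry $V(-x)=V(x)$ the only change is the sign of the cubic term.
5. Expand $Z_\varepsilon$ the same way and divide. This gives $B_m(F)=\tfrac12\sum_{i}\tfrac{\gamma_{m,i}}{i!}F^{(i)}(1)+\tfrac12\sum_i\tfrac{\bar\gamma_{m,i}}{i!}F^{(i)}(-1)$. Here $\gamma_{m,i}$ is the order-$\varepsilon^{m/2}$ coefficient of $\int y^i e^{-y^2-\varepsilon^{1/2}y^3-\varepsilon y^4/4}\,\mathrm dy\big/\int e^{-y^2-\cdots}\,\mathrm dy$, and $\bar\gamma_{m,i}$ is the analogue from $x=-1$. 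The weight $\tfrac12$ comes from the symmetry of the two wells.

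For (ii), the key observation is that the dynamical limits $c_{m,i}$ are uniquely determined by the recursion \eqref{eq-c-n-intro} together with the initial values. So it suffices to show that the $\gamma_{m,i}$ satisfy the same recursion and the same base cases. I would define the $\varepsilon$-dependent Gibbs moments
\[
M_i(\varepsilon)=\frac{\int y^i e^{-U_\varepsilon(y)}\,\mathrm dy}{\int e^{-U_\varepsilon(y)}\,\mathrm dy},\qquad U_\varepsilon(y)=y^2+\varepsilon^{1/2}y^3+\tfrac14\varepsilon y^4,
\]
with the integrals restricted to a neighbourhood or extended up to exponentially small errors. The integration-by-parts identity $\int \partial_y(y^{i-1})e^{-U}=\int y^{i-1}U'e^{-U}$ gives
\[
(i-1)M_{i-2}=2M_i+3\varepsilon^{1/2}M_{i+1}+\varepsilon M_{i+2}.
\]
Writing $M_i=\sum_m\varepsilon^{(m-i)/2}\cdots$ requires care. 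I would set $\gamma_{m,i}$ to be the coefficient of $\varepsilon^{m/2}$ in $\varepsilon^{i/2}M_i(\varepsilon)$, which is the moment of $x-1=\varepsilon^{1/2}y$. Multiplying the identity by $\varepsilon^{i/2}$ and matching the coefficient of $\varepsilon^{m/2}$ then gives
\[
\gamma_{m,i}=\tfrac{i-1}{2}\gamma_{m-2,i-2}-\tfrac32\gamma_{m,i+1}-\tfrac12\gamma_{m,i+2}.
\]
This is exactly \eqref{eq-c-n-intro}. The $\bar\gamma_{m,i}$ satisfy the version with $+\tfrac32$, since the cubic sign flips.

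It remains to check the base cases and support conventions.

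1. $\gamma_{0,0}=1$ holds by normalisation.
2. $\gamma_{m,i}=0$ for $i>m$, because $\varepsilon^{i/2}M_i=O(\varepsilon^{i/2})$.
3. $\gamma_{1,1}=0$ and $\gamma_{2,2}=\tfrac12$ follow from the Gaussian moments $E[y]=0$ and $E[y^2]=\tfrac12$ of $e^{-y^2}$.

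Uniqueness of the nested recursion then gives $\gamma_{m,i}=c_{m,i}$ and $\bar\gamma_{m,i}=\bar c_{m,i}$. Comparing with \eqref{eq-b-n-intro} at $\mathbb P(\xi_0>0)=\tfrac12$ yields $B_m=b_m$. The case $m=0$ is immediate from \eqref{eq-lim-a0}.

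I expect the main obstacle to be the bookkeeping in step 5 of (i), specifically showing that the ratio expansion is legitimate. The matched coefficients must come from a genuine asymptotic expansion of $\varepsilon^{i/2}M_i(\varepsilon)$ with remainder bounds, so that identifying coefficients is valid. I would handle this by first proving that $\varepsilon^{i/2}M_i(\varepsilon)$ has a full expansion in powers of $\varepsilon^{1/2}$ from the Laplace step. The identity then holds exactly, up to exponentially small boundary terms from truncation, so coefficients match order by order.
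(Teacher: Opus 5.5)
Your proposal is correct. Part (i) is essentially the paper's Laplace argument, but part (ii) reaches the recursion by a more local route.

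\textbf{Part (i).} The paper splits $F=F_++F_-+H$, with $F_\pm$ the cut-off Taylor polynomials at $\pm1$. It cites Olver's Laplace theorem for each $\int\chi_+(x)(x-1)^i e^{-V/\varepsilon}\,\mathrm dx$ and bounds $\int H\,\mathrm d\mu_\varepsilon$ using that $H$ vanishes to order $n+1$ at $\pm1$. Your rescaling $x=\pm1+\varepsilon^{1/2}y$, with Taylor expansion of $F$ and of $e^{\mp\varepsilon^{1/2}y^3-\varepsilon y^4/4}$, is an equivalent, self-contained version of this.

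\textbf{Part (ii).} The paper works globally. It tests $\int[(x-x^3)F'+\varepsilon F'']\,\mathrm d\mu_\varepsilon=0$ with arbitrary $F\in C_c^\infty$ and inserts the structural formula for $B_m(G_1)$ and $B_{m-2}(G_2)$. It then reads off the coefficient of $F^{(i)}(1)$ via Leibniz's rule and the freedom to prescribe the jets of $F$ at $\pm1$. It also needs separate arguments for $d_{m,0}$ (evenness of $V$ with $F\equiv1$) and for $d_{1,1}$ ($F(x)=x$).

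Your integration by parts against $y^{i-1}$ is the same invariance identity, tested against $(x\mp1)^i/i$ inside each well. What it buys you:
\begin{itemize}
\item the recursion comes out in one line;
\item normalizing by the local mass gives $\gamma_{m,0}=\delta_{m,0}$ for free. State this explicitly, since $\gamma_{m-2,0}$ enters the recursion at $i=2$.
\end{itemize}
The price is that you must keep the integrals local and account for exponentially small boundary terms at $y=\pm\delta\varepsilon^{-1/2}$. The paper's global identity is exact and has no such terms.

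\textbf{Two cautions.}
\begin{itemize}
\item Your phrase ``extended up to exponentially small errors'' cannot mean extending to the whole line. $U_\varepsilon$ has a second minimum of equal depth at $y=-2\varepsilon^{-1/2}$, which is the other well. Full-line moments would mix that well in: for $i\ge1$, $\varepsilon^{i/2}M_i=\int(x-1)^i\,\mathrm d\mu_\varepsilon\to\tfrac12(-2)^i\neq0$. This would destroy $\gamma_{m,i}=0$ for $i>m$.
\item In step 5 of (i) you describe $\gamma_{m,i}$ as the $\varepsilon^{m/2}$-coefficient of $M_i$ itself. The consistent definition is the one you adopt in (ii), the $\varepsilon^{m/2}$-coefficient of $\varepsilon^{i/2}M_i$.
\end{itemize}
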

Theorem \ref{thm-longtime-intro}, Theorem \ref{thm-rate-intro} and Theorem \ref{thm-eq-vs-inv} provide positive answers to \textbf{Q1} and \textbf{Q2} in the scalar case $d=1$. 
When $d\ge2$, the long-time behavior of the fluctuation coefficients is different.
\begin{theorem}[Non-convergence in the vector case]\label{thm-vector-case}
Let $d\ge2$ and $n\geq 2$. Then there exists a function $F\in C^{n+1}(\mathbb{R}^d)$ satisfying \eqref{assump-F}, and a family of initial data $\{\xi_\varepsilon\}_{\varepsilon\in(0,1)}$ satisfying Assumption \ref{A1}, Assumption \ref{A2} and Assumption \ref{A3}, such that the second-order coefficient $a_2(t, F)$ (defined as in Proposition \ref{thm-weak-exp-intro}) does not admit a finite limit as $t\to\infty$.
\end{theorem}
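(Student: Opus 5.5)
We construct an explicit counterexample in which the degeneracy of the sphere of minima $\{|x|=1\}$ produces a tangential random walk at order $\varepsilon^{1/2}$. We take deterministic initial data $\xi_\varepsilon\equiv e_1$, so that $\xi_0=e_1$, $\xi_k=0$ for $k\ge1$. Assumptions \ref{A1}, \ref{A2} (with, e.g., $r_{min}=1/2$, $r_{max}=2$) and \ref{A3} then hold trivially. With this choice $\bar X_0(t)\equiv e_1$ is a stationary solution of \eqref{eq-barX0-intro}. The linearization matrix $(1-|\bar X_0|^2)I-2\bar X_0\otimes\bar X_0$ becomes $-2e_1\otimes e_1$.

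\textbf{First-order term.} Writing $\bar X_1=(Y,Z)$ with $Y\in\mathbb{R}$ and $Z\in\mathbb{R}^{d-1}$, equation \eqref{eq-barX1-intro} decouples into
\[
\dd Y=-2Y\dd t+\sqrt2\dd W^1_t,\qquad \dd Z=\sqrt2\dd W'_t,
\]
with $Y(0)=0$ and $Z(0)=0$. Thus $Y$ is an Ornstein--Uhlenbeck process with $\mathbb{E}Y(t)^2=\tfrac12(1-e^{-4t})$. The tangential part is $Z(t)=\sqrt2 W'_t$, so $\mathbb{E}|Z(t)|^2=2(d-1)t$, which grows linearly in time. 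This linear growth is the source of non-convergence.

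\textbf{Second-order term and choice of observable.} By \eqref{eq-barXm-intro} with $m=2$, we have $\mathcal D_2^2=\{(1,1)\}$ and $\mathcal D_3^2=\varnothing$, hence
\[
\dd\bar X_2=-2(\bar X_2\cdot e_1)e_1\dd t-\bigl[|\bar X_1|^2e_1+2Y\bar X_1\bigr]\dd t,\qquad \bar X_2(0)=0.
\]
We choose the linear observable $F(x)=x_1$. It is smooth, satisfies \eqref{assump-F}, and has $D^2F=0$. By \eqref{eq-an-intro}, $a_2(t,F)=\mathbb{E}[\bar X_2(t)\cdot e_1]=:\mathbb{E}[U(t)]$. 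The first component $U$ solves
\[
\dd U=-2U\dd t-\bigl(|Z|^2+3Y^2\bigr)\dd t,\qquad U(0)=0,
\]
so that
\[
\mathbb{E}U(t)=-\int_0^t e^{-2(t-s)}\bigl(2(d-1)s+\tfrac32(1-e^{-4s})\bigr)\dd s .
\]
The term $\int_0^t e^{-2(t-s)}s\,\dd s=\tfrac t2-\tfrac14+\tfrac14e^{-2t}$ grows linearly, so $a_2(t,F)\sim-(d-1)t\to-\infty$, and $a_2(t,F)$ has no finite limit. Geometrically, the tangential diffusion along the sphere is reflected in the radial correction $-|Z|^2/2$, which comes from the curvature of the sphere.

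\textbf{Remaining checks.} The only delicate points are verifying the decoupling of the linearized system, which is immediate since $\bar X_0$ is constant, and correctly extracting the $e_1$-component of the forcing term, namely $|\bar X_1|^2+2Y^2=|Z|^2+3Y^2$. If one insists on random $\xi_0$, the same computation applies with $\xi_0$ uniformly distributed on the unit sphere and independent of $W$, using rotational invariance. The deterministic choice suffices for the statement, however.
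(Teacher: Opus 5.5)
Your proof is correct and is essentially the paper's own argument. It uses the same datum $\xi_\varepsilon\equiv e_1$ (so $\bar X_0\equiv e_1$), the same splitting of $\bar X_1$ into an Ornstein--Uhlenbeck radial part and a Brownian motion on $e_1^\perp$, the same radial equation $\dd U=-2U\dd t-(3Y^2+|Z|^2)\dd t$, and the same observable $F(x)=x_1$, which gives $a_2(t,F)\sim-(d-1)t$.

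You should, however, drop the closing aside about uniformly distributed $\xi_0$, because it is false. If $\xi_0$ is uniform on the unit sphere, the law of $X_\varepsilon(t)$ is rotation invariant, so $a_2(t,x_1)\equiv 0$ by the symmetry $x\mapsto -x$. More generally, with $\xi_k=0$ for $k\ge1$, the linear-in-$t$ part of $a_2(t,F)$ equals $t\,\mathbb{E}[\Delta_{\mathbb{S}^{d-1}}F(\xi_0)]$. This vanishes for the uniform law, so in that case no observable exhibits the divergence.
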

 In particular, in dimensions $d\ge2$, one cannot recover the coefficients of the invariant measure expansion from the long-time behavior of the dynamical fluctuation coefficients $(a_m(t,F))_{m\ge0}$. This shows that the answers to \textbf{Q1} and \textbf{Q2} in the scalar case do not extend to higher dimensions, and hence provides an affirmative answer to \textbf{Q3}.

\subsection{Overview of the approach}

The proofs of Propositions \ref{thm-dyna-exp-intro} and \ref{thm-weak-exp-intro} rely on a systematic analysis of the higher-order fluctuation terms. At the formal level, we substitute \eqref{eq-dyna-exp} into \eqref{SDE-0}, expand the nonlinearity $G(x):=|x|^2x$ around the trajectory $\bar X_0$. By matching terms of order $\varepsilon^{m/2}$, we derive the recursive equations \eqref{eq-barX1-intro} \eqref{eq-barXm-intro} for the processes $(\bar X_m)_{1\leq m\leq n}$.
 Since the inhomogeneous term in the equation for $\bar X_m$ only involves the lower-order fluctuations $\bar X_0,\dots,\bar X_{m-1}$, we can derive moment bounds for $(\bar X_m)$ by induction on $m$ (see Proposition \ref{prop-lp-estimate} below). With these moment bounds, Proposition \ref{thm-dyna-exp-intro} is also proved by induction: assuming that the dynamical expansion holds up to order $m-1$, we express the remainder $w_{\varepsilon,m}$ in terms of lower-order terms and use their $L^{2q}$-bounds to control $\|w_{\varepsilon,m}\|_{L^{2p}}$. Proposition \ref{thm-weak-exp-intro} follows from the strong convergence of Proposition \ref{thm-dyna-exp-intro} together with a Taylor expansion of the observable $F$; the coefficients $(a_m(t,F))$ defined in \eqref{eq-an-intro} are precisely the terms arising from this expansion.

The long-time analysis in the scalar case (Theorem \ref{thm-longtime-intro}) requires a more delicate argument based on a nested induction strategy. Since the initial data $\xi_0$ satisfies Assumption \ref{A2}, $\bar X_0(t)$ converges exponentially fast to the stable equilibrium $\operatorname{sgn}(\xi_0)$ as $t \to \infty$, almost surely. Consequently,  $F^{(i)}(\bar X_0(t))$ also converges to $F^{(i)}(\pm 1)$ for each $i\leq n$. 
 Recall that for $m=1,2,\cdots,n$, we can write $$a_m(t,F)= \mathbb{E}\Big[\sum_{i=1}^{m}\frac{1}{i!}	\sum_{(j_1,\dots,j_i)\in\mathcal{D}_i^{m}}
		F^{(i)}\big(\bar X_0\big)S_{m,i}(t)\Big],$$ where $$S_{m, i}(t):=\sum_{(j_1,\dots,j_i)\in\mathcal{D}_i^m} \prod_{k=1}^{i}\bar{X}_{j_k}(t).
$$ 
 The central task is thus to analyze the asymptotic behavior of the terms $S_{m,i}(t)$. By applying It\^o's formula and utilizing combinatorial identities, we find that $S_{m,i}$ satisfies
  \begin{equation}\label{eq-SDE-Smi}
  \begin{aligned}     
\dd S_{m,i}(t)
&= i\sqrt2S_{m-1,i-1}\dd W_t
+ i\bigl(1-3\bar X_0^2\bigr)S_{m,i}\dd t
+i(i-1)S_{m-2,i-2}\dd t\\
&\quad-i(3\bar X_0S_{m,i+1}+S_{m,i+2})\dd t.
  \end{aligned}
   \end{equation}
    The evolution of $S_{m,i}$ is driven by terms with either a lower total order $m-2$ or the same order $m$ but a larger number of factors, namely $S_{m,i+1}$ and $S_{m,i+2}$. This structure leads us to a nested induction: an outer induction on the total order $m$, and for each fixed $m$, an inner induction on the number of factors $i$ that proceeds downwards from $i=m$ to $i=1$; see Figure \ref{fig:nested-induction} for a schematic illustration of the induction scheme. Since $\bar X_0^2(t)$ converges exponentially fast to $1$ as $t \to \infty$, the linear term $i(1 - 3\bar X_0^2(t)) S_{m,i}$ behaves like $-2i S_{m,i}$ for large $t$. By taking expectations and passing to the limit $t \to \infty$, \eqref{eq-SDE-Smi} reduces to the algebraic recursive relation \eqref{eq-c-n-intro} for the limits $c_{m,i}$ and $\bar c_{m,i}$.

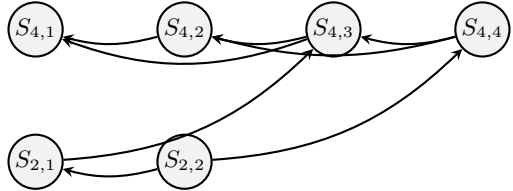
\begin{figure}[ht]
	\centering
	\begin{tikzpicture}[>=stealth,scale=1.1]
		\tikzstyle{mynode}=[circle,draw,thick,fill=gray!10,inner sep=1pt,minimum size=18pt]
		
		% Row n=4: S_{4,i}
		\node[mynode] (S41) at (0.0,0)   {\small $S_{4,1}$};
		\node[mynode] (S42) at (1.8,0)   {\small $S_{4,2}$};
		\node[mynode] (S43) at (3.6,0)   {\small $S_{4,3}$};
		\node[mynode] (S44) at (5.4,0)   {\small $S_{4,4}$};
		
		% Row n=2: S_{2,i}
		\node[mynode] (S21) at (0.0,-1.6) {\small $S_{2,1}$};
		\node[mynode] (S22) at (1.8,-1.6) {\small $S_{2,2}$};
		
		% Arrows
		
		% S_{4,4} uses S_{2,2}
		\draw[->,thick] (S22) to[bend right=20] (S44);
        
		% S_{4,3} uses S_{4,4} and S_{2,1}
		\draw[->,thick] (S44) to[bend left=15] (S43);
		\draw[->,thick] (S21) to[bend right=20] (S43);
		
		% S_{4,2} uses S_{4,3} and S_{4,4}
		\draw[->,thick] (S44) to[bend left=15] (S42);
		\draw[->,thick] (S43) to[bend left=15] (S42);
		
		% S_{4,1} uses S_{4,2} and S_{4,3}
		\draw[->,thick] (S43) to[bend left=20] (S41);
		\draw[->,thick] (S42) to[bend left=15] (S41);
		
		% S_{2,1} uses S_{2,2}
		\draw[->,thick] (S22) to[bend left=15] (S21);
	\end{tikzpicture}
	\caption{Schematic picture of the nested induction on $S_{n,i}$ in the case $n=4$.
	An arrow from $A$ to $B$ indicates that $A$ appears as a forcing term in the evolution equation for $B$.}
	\label{fig:nested-induction}
\end{figure}

The behavior in higher dimensions ($d\ge 2$) is different and stems from the geometric degeneracy of the potential $V$ at its minima.  For $d\ge 2$, the deterministic flow $\bar X_0$ converges to a point on the sphere $\mathbb{S}^{d-1}$, and the linearized operator  $\big(1-|\bar X_0|^2\big)I-2\bar X_0\otimes\bar X_0$  has a zero eigenvalue in the directions tangent to the sphere. In particular, if we write $\hat\xi_0:=\xi_0/|\xi_0|$ and decompose the first-order fluctuation into its radial and tangential components
$$
\bar X_1(t)=r_1(t)\hat\xi_0+v_1(t),
$$
then $r_1$ and $v_1$ satisfy linear SDEs in these eigenspaces, respectively. Along the radial direction, $r_1=\bar X_1 \cdot \hat\xi_0 $ solves an Ornstein-Uhlenbeck type SDE with asymptotically negative drift $$ \left(1-3\left|\bar{X}_0\right|^2\right) r_1\approx -2 r_1,$$ so $\mathbb{E}[r_1(t)^2\mid\xi_0]$ converges to a finite limit as $t\to\infty$. In contrast, along the tangential directions, the drift $(1-|\bar X_0(t)|^2)v_1$ approaches $0$, so $v_1$ behaves asymptotically like a Brownian motion on the tangent space, and
$\mathbb{E}[|v_1(t)|^2\mid\xi_0]\asymp 2(d-1)t$ as $t\to\infty$. At second order, we similarly decompose
$$
\bar X_2(t)=r_2(t)\,\hat\xi_0+v_2(t),
$$
and $r_2=\bar X_2\cdot\hat\xi_0$ satisfies an inhomogeneous linear equation whose forcing terms involve $r_1^2$ and $|v_1|^2$:
$$
\mathrm{d} r_2 =\left(1-3\left|\bar{X}_0\right|^2\right) r_2 \dd t-3\left|\bar{X}_0\right| r_1^2 \dd t -\left|\bar{X}_0\right|\left|v_1\right|^2\dd t.
$$
The $r_1^2$-contribution remains bounded in time, but the forcing coming from $|v_1|^2$ inherits the linear growth of $\mathbb{E}[|v_1(t)|^2\mid\xi_0]$ and yields
$$
\mathbb{E}[r_2(t)\mid\xi_0]\asymp -(d-1)t\quad\text{as }t\to\infty.
$$ This prevents $a_2(t,F)=\mathbb E\big[DF(\bar X_0(t))\bar X_2(t)\big] +\frac12\mathbb E\big[D^2F(\bar X_0(t))(\bar X_1(t),\bar X_1(t))\big]$ from having a finite limit in general.

Finally, we briefly explain how the dynamical coefficients $(b_m(F))$ in Theorem \ref{thm-longtime-intro} can be identified with the coefficients $(B_m(F))$ in \eqref{invariantmeas-expansion} for the invariant measure $\mu_\varepsilon$ of \eqref{SDE-0}, under the condition
 $\mathbb{P}(\xi_0>0)=\mathbb{P}(\xi_0<0)=1/2$. By applying Laplace's method (see, for example, \cite[Chapter 3]{Olv97}) to the density of $\mu_\varepsilon$ around the two non-degenerate minima at $\pm1$, one obtains an expansion of the form
\begin{equation}\label{eq:Bm-structure-intro}
B_m(F) = \frac{1}{2} \sum_{i=0}^{m} \frac{d_{m,i}}{i!} F^{(i)}(1) + \frac{1}{2} \sum_{i=0}^{m} \frac{\bar d_{m,i}}{i!} F^{(i)}(-1), \end{equation}
for certain coefficients $d_{m,i}$ and $\bar d_{m,i}$ that depend only on the potential $V$ and on $m,i$. Comparing \eqref{eq:Bm-structure-intro} with \eqref{eq-b-n-intro} for $b_m(F)$, we see that it is natural to derive recursive relations for $d_{m,i}$ and $\bar d_{m,i}$ and to compare them with \eqref{eq-c-n-intro}. A convenient way to obtain such recursions is to exploit the stationary Fokker--Planck equation satisfied by the invariant density. We can carefully choose test functions for the Fokker--Planck equation and match the coefficients of the different powers of $\varepsilon^{1/2}$. This yields an algebraic recursive relation for the coefficients $(d_{m,i})$ and $(\bar d_{m,i})$, which turns out to coincide with the recursion \eqref{eq-c-n-intro}, with the same initial conditions at low orders. The complete argument is carried out in Section \ref{sec-application}.
 
\subsection{Generalizations}

\

Although our results are formulated for the specific potential function
$V(x) = \frac{1}{4}|x|^4-\frac{1}{2}|x|^2$, the methods in this work are not restricted to this particular choice. More generally, one may consider broader classes of polynomial potentials in the dynamical fluctuation expansions. In that setting, the fluctuation equations and the resulting recursion relations will in general differ from those derived here, and the form and order of the recursion depend on the polynomial structure of $V$. We have chosen to focus on the classical double-well potential because this model provides a particularly transparent setting, in which one can examine in detail the difference between degenerate and non-degenerate structures of the set of minima. 

However, the relation between the long-time dynamical coefficients and the invariant measure expansion coefficients is highly sensitive to the symmetry structure of the set of minima. In the present symmetric double-well case, the two minima have the same depth and their local Taylor expansions are related by symmetry, which leads to a particularly simple balance between the two wells. For a general polynomial potential, such a balance may fail. If the minima do not have the same depth, then only the global minima contribute to the algebraic expansion of the invariant measure, while contributions from higher local minima are exponentially small and hence invisible at the level of polynomial expansions in $\varepsilon^{1/2}$ \cite{Hwa80, AH10}. In this case, matching with the long-time limit of the dynamical expansion coefficients can only be expected when the initial distribution is concentrated in the basins of the global minima. Even when several global minima have the same depth, differences in their Hessians and higher-order derivatives generally lead to different local coefficients and different equilibrium weights, so the identity $b_m=B_m$ requires additional structural conditions.

\subsection{Background and applications}

\ 

\textbf{Background from random dynamical systems.}
The investigation of higher-order fluctuation expansions for SDE systems is not merely of technical interest but is also strongly motivated by questions from the theory of random dynamical systems (RDS). To understand the long-time stability of an RDS, the Lyapunov spectrum quantifies the asymptotic growth rates of infinitesimal perturbations along random trajectories and plays an essential role in the analysis of ergodic properties \cite{Arn98}. In particular, negativity of the top Lyapunov exponent is closely related to asymptotic stability and the synchronization-by-noise phenomenon for RDS \cite{Bax91, FGS17, SV18}. Interestingly, higher-order fluctuation expansions in equilibrium can be viewed as an effective tool for estimating Lyapunov exponents.

A recent breakthrough in this direction was achieved by Gess and Tsatsoulis \cite{GT24}, who derived quantitative estimates for the top Lyapunov exponent of stochastic reaction--diffusion systems, encompassing potentials with both nondegenerate and degenerate minima. More precisely, they consider
\begin{equation}\label{SPDE-0}
\partial_t u_\varepsilon = \Delta u_\varepsilon - \nabla V(u_\varepsilon) + \sqrt{2}\, \varepsilon^{1/2} \xi, \quad \text{on } \mathbb{R}_+ \times \mathbb{T}^1,
\end{equation}
where $V:\mathbb{R}^d \to \mathbb{R}$ is a potential function, $\xi$ denotes the $d$-dimensional space-time white noise, and $\varepsilon>0$ quantifies the intensity of the stochastic perturbation. Under suitable assumptions on $V$, the process $u_\varepsilon$ possesses a unique invariant measure $\mu_\varepsilon$. A central ingredient in their analysis is a small-noise asymptotic expansion of $\mu_\varepsilon$ of the form \eqref{invariantmeas-expansion}, together with careful control of the associated error terms. 

The small-noise expansion approach of \cite{GT24}, however, crucially relies on the availability of an explicit formula for the invariant measure, which is generally unavailable for many stochastic systems. For instance, if the noise in \eqref{SPDE-0} is replaced by a more general trace-class noise, the invariant distribution may no longer admit an explicit Gibbs-type expression \cite[Chapter 11]{DPZ}. Even in finite dimensions, if the additive noise is replaced by a multiplicative one or the drift is non-gradient, the invariant measure typically does not possess a closed-form density \cite{FW84}. Consequently, small-noise asymptotics must be analyzed by alternative techniques; see, for example, \cite{She86, Mik88, BB09, MSNP25}. Due to this limitation, our goal is to develop a complementary approach to learn the expansion coefficients of the invariant measure from the expansion coefficients of the dynamics. We construct an expansion argument that relies solely on the governing equation, without requiring explicit Gibbs-type formulas for the invariant measure.

\textbf{Applications to stochastic optimization and sampling.}
A related perspective comes from stochastic gradient type algorithms arising in both optimization and sampling. In particular, stochastic gradient descent (SGD)
\begin{align}\label{disc-SGD}
\theta_{k+1} = \theta_k - \eta \nabla_{\theta} \hat{L}(\theta_k, \xi_k)
\end{align}
plays a central role in seeking minimizers of a loss function, where $\theta_k\in\mathbb{R}^d$ denotes the parameter at iteration $k$, $\hat{L}(\theta,\xi_k)$ denotes a stochastic approximation of the true loss based on a randomly sampled data point $\xi_k$, and $\eta>0$ represents the learning rate. Under suitable assumptions and scalings, \eqref{disc-SGD} admits continuous-time diffusion approximations; see, for example, \cite{LTE19,SSJ23}.  
In the learning rate dependent SDE surrogate considered in \cite{SSJ23}, the learning rate appears as a scalar prefactor in front of the diffusion term, yielding an additive-noise gradient diffusion of the form \eqref{SDE-0}. Related SDE models also arise in stochastic gradient Langevin dynamics (SGLD), where an isotropic Gaussian perturbation is explicitly injected at each step; see, e.g., \cite{WT11,RRT17}. From this viewpoint, our higher-order expansions may capture non-Gaussian corrections to the accumulated stochastic gradient errors, thereby yielding refined information on the long-term statistical behavior of these stochastic gradient type algorithms.

\subsection{Comments on the literature}

\

\textbf{Small-noise asymptotic expansions for SDEs and SPDEs.}
Small-noise asymptotic expansions for stochastic dynamics arise in several contexts.
At the dynamical level,  Malliavin--Watanabe theory provides power-series type expansions in the small-noise parameter for diffusion processes; see, for example, \cite{Wat87,Yos92,Yos93}.
For SDEs driven by multiplicative L\'evy noise, expansions of the form \eqref{eq-dyna-exp} were obtained in \cite{AS15}.
In the infinite-dimensional setting, small-noise asymptotic expansions for reaction--diffusion type SPDEs with Gaussian and L\'evy noise were constructed in \cite{ADPM11,AMS13}. For singular SPDEs, Friz and Klose \cite{FK22} recently derived Laplace asymptotics for small-noise functionals of the two-dimensional generalized parabolic Anderson model (gPAM), yielding
$$
\mathbb{E}\bigl[\exp\bigl(-F(u_\varepsilon)/\varepsilon\bigr)\bigr]
\sim \exp\Bigl(-\frac{I(F)}{\varepsilon}\Bigr)
\Bigl(c_0(F)+\varepsilon c_1(F)+\cdots\Bigr),
$$
for a broad class of functionals $F$ of the renormalized solution $u_\varepsilon$.
More recently, Gess, the second author and Zhang \cite{GWZ26} established higher-order fluctuation expansions for nonlinear stochastic heat equations in a joint scaling regime combining small noise with a singular limit.

At equilibrium, related perturbative expansions have been obtained for Gibbs-type invariant measures.
Laplace-type expansions for Gaussian functional integrals go back to \cite{ER82,ER81}.
Building on this viewpoint, small-noise expansions for Gibbs-type invariant measures of stochastic reaction-diffusion equations were derived in \cite{GT24}; see also \cite{GST24} for low-temperature expansions of Gibbs measures associated with singular Euclidean fields, such as the $\Phi^4_2$ model.
A complementary strand concerns asymptotics on the exponential (large deviation) scale, leading to Wentzel--Kramers--Brillouin (WKB) type representations for invariant densities $\rho_\varepsilon$.
In the classical Freidlin--Wentzell theory \cite{FW84}, the long-time behavior of small-noise diffusions is governed by a quasipotential $U$, yielding logarithmic asymptotics of the form
$$
\rho_\varepsilon(x)=\exp\Bigl(-\frac{U(x)+o(1)}{\varepsilon}\Bigr),\quad \varepsilon\to0,
$$
i.e. the leading term is specified up to subexponential factors.
Under additional non-degeneracy assumptions, one can refine this to an expansion with both exponential and polynomial contributions, typically written in the WKB form
$$
\rho_\varepsilon(x)\sim \exp\Bigl(-\frac{U(x)}{\varepsilon}\Bigr)
\Bigl(\varphi_0(x)+\varepsilon \varphi_1(x)+\varepsilon^2 \varphi_2(x)+\cdots\Bigr),
\qquad \varepsilon\to0,
$$
where $U$ solves a Hamilton--Jacobi equation and $\varphi_k$ are determined recursively.
For finite-dimensional diffusions, such WKB type expansions of invariant densities were obtained in \cite{She86,Mik88}.

\smallskip
\noindent\textbf{Double-well potentials, metastability and synchronization by noise.}
Double-well and multi-well potentials are canonical models for thermally activated transitions between metastable states, going back to Eyring's transition state theory and Kramers' reaction rate theory \cite{Eyr35,Kra40}.
For reversible overdamped gradient diffusions in multi-well landscapes, sharp Eyring--Kramers asymptotics for exit times and transition rates were established in \cite{BEGK04,BEGK05}; see also \cite{BdH15}.
Beyond the reversible overdamped setting, Eyring--Kramers type formulas for non-reversible metastable diffusions were derived in \cite{LS22}, and for underdamped Langevin dynamics, the corresponding transition time asymptotics were identified in \cite{LRS25}.

Besides metastability, double-well dynamics provides a canonical setting for the study of synchronization by noise. In the one-dimensional case, the drift $x-x^3$ in \eqref{SDE-0} corresponds to the bistable regime of the pitchfork bifurcation family $ax-x^3$ investigated in \cite{CF98,CDLR17}.
For SDEs with additive noise and multidimensional potentials, the negativity of the top Lyapunov exponent and synchronization by noise were studied in \cite{FGS17}, and analogous phenomena for stochastic reaction--diffusion equations with double-well potential were analyzed in \cite{GT24}.

\smallskip
\noindent\textbf{Overdamped versus underdamped Langevin dynamics.}
The overdamped Langevin dynamics \eqref{SDE-0}
is a reversible diffusion with respect to the Gibbs measure proportional to $\exp(-V/\varepsilon)$ and is widely used to model strongly damped motion in a potential landscape. The underdamped (kinetic) Langevin dynamics evolves on phase space $(x,v)\in \mathbb{R}^{2d}$, where $x$ and $v$ denote position and velocity, respectively. Unlike the overdamped case, this system retains inertia, and its evolution is described by:
$$
\dd x_t = v_t\dd t,\qquad
m\mathrm{d}v_t = -\gamma v_t\dd t - \nabla V(x_t)\dd t + \sqrt{2\gamma\varepsilon}\dd W_t,
$$
with invariant measure proportional to $\exp\bigl(-\tfrac{1}{\varepsilon}(V(x)+m|v|^2/2)\bigr)\dd x\dd v$, whose position marginal coincides with the invariant Gibbs measure of the overdamped dynamics. Both formulations play a central role in molecular dynamics and sampling algorithms \cite{LRS10,EGZ19}, and are linked by the small-mass (or large-friction) limit, in which the underdamped process converges to its overdamped counterpart and higher-order corrections in the mass parameter can be quantified \cite{BW20, LLX26}.

\subsection{Structure of the paper}
The rest of this paper is organized as follows. In Section \ref{sec-dyna-expansion}, we establish the dynamical expansion for the overdamped Langevin SDEs. In Section \ref{sec-weak-expansion}, we derive the expansion of the laws with respect to smooth observables. In Section \ref{sec-long-time}, we prove the existence of long-time limits of these expansion coefficients and present their convergence rates. Finally, in Section \ref{sec-application}, we identify these long-time limits, showing that they coincide with the corresponding equilibrium expansion coefficients.

 \section{Dynamical asymptotic expansion}\label{sec-dyna-expansion}
 
 In this section, we prove Proposition \ref{thm-dyna-exp-intro}.
  We first establish $L^{p}$-estimates for the coefficients, and then prove the convergence of the remainder terms.

\subsection{$L^{2p}$-Estimates for the expansion coefficients}

Recall that $\bar X_0$ and $\bar X_1$ solve \eqref{eq-barX0-intro} and \eqref{eq-barX1-intro}, and for $m\ge2$ the coefficient $\bar X_m$ satisfies \eqref{eq-barXm-intro}.
We now derive $ L^{2p} $-estimates for $ (\bar{X}_m)_{0\leq m\leq n} $.
\begin{proposition}\label{prop-lp-estimate}
   Let  $\varepsilon\in (0,1)$ and assume that $X_\varepsilon$ is the solution of \eqref{SDE-0} with initial data satisfying Assumption \ref{A1}. Let $(\bar{X}_m)_{0\leq m\leq n}$  be the solution of \eqref{eq-barX0-intro}-\eqref{eq-barXm-intro}, respectively. 
  Then, for each $p\geq 1$ and $t>0$, there exists a constant $C(p,t,n)>0$, such that for every $m\in\{0,\dots,n\}$,
  \begin{align*}
    	\mathbb{E}\big|\bar{X}_{m}(t)\big|^{2p}\leqslant C(p,t,n).
    \end{align*} 
\end{proposition}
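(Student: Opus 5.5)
We argue by induction on $m$, in the order $\bar X_0,\bar X_1,\bar X_2,\dots,\bar X_n$, using that the equation for $\bar X_m$ is linear in $\bar X_m$ with a forcing term built only from $\bar X_0,\dots,\bar X_{m-1}$.

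\emph{Case $m=0$.} Compute
\[
\frac{\dd}{\dd t}|\bar X_0|^2=2|\bar X_0|^2-2|\bar X_0|^4\le 2|\bar X_0|^2 .
\]
Hence $|\bar X_0(t)|\le e^{t}|\xi_0|$. In fact $|\bar X_0(t)|\le\max(1,|\xi_0|)$, since $|x|^2$ decreases whenever it exceeds $1$. Assumption \ref{A1} then gives $\mathbb E|\bar X_0(t)|^{2p}\le C(p)$ uniformly in $t$.

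\emph{Case $m=1$.} Write $A(t):=(1-|\bar X_0|^2)I-2\bar X_0\otimes\bar X_0$. For every $v$,
\[
v\cdot A(t)v=(1-|\bar X_0|^2)|v|^2-2(\bar X_0\cdot v)^2\le |v|^2 .
\]
Let $\Phi(t,s)$ be the fundamental matrix of $\dot y=A(t)y$. The bound above gives $\|\Phi(t,s)\|\le e^{t-s}$ deterministically, pathwise in $\xi_0$. For $m=1$ we use Itô's formula on $|\bar X_1|^{2p}$:
\[
\dd|\bar X_1|^{2p}\le 2p|\bar X_1|^{2p-2}\bigl(|\bar X_1|^2+(d+2p-2)\bigr)\dd t+\text{martingale}.
\]
Young's inequality and Gronwall then yield $\mathbb E|\bar X_1(t)|^{2p}\le C(p,t)(1+\mathbb E|\xi_1|^{2p})$. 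Equivalently, one can use the mild form $\bar X_1(t)=\Phi(t,0)\xi_1+\sqrt2\int_0^t\Phi(t,s)\dd W_s$, applying BDG and the deterministic bound on $\Phi$; note that $\Phi$ is $\mathcal F_0$-measurable, being determined by $\xi_0$, which is independent of $W$. A localization by stopping times justifies taking expectations.

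\emph{Inductive step, $m\ge2$.} Assume $\sup_{s\le t}\mathbb E|\bar X_k(s)|^{2q}\le C(q,t)$ for all $q\ge1$ and all $k<m$. Variation of constants gives
\[
\bar X_m(t)=\Phi(t,0)\xi_m-\int_0^t\Phi(t,s)R_m(s)\dd s ,
\]
where $R_m$ is the sum over $\mathcal D_2^m$ and $\mathcal D_3^m$ in \eqref{eq-barXm-intro}. Since every index in $\mathcal D_2^m$ and $\mathcal D_3^m$ is at least $1$ and the indices sum to $m$, each is at most $m-1$. Therefore
\[
|R_m(s)|\le C\sum\bigl(|\bar X_i||\bar X_j||\bar X_0|+|\bar X_i||\bar X_j||\bar X_k|\bigr),
\]
with all indices strictly less than $m$. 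By Jensen in time together with $\|\Phi\|\le e^{t}$, and then Hölder's inequality with exponents $3$ (triple products of $L^{6p}$ norms), we get
\[
\mathbb E|\bar X_m(t)|^{2p}\le C\,e^{2pt}\Bigl(\mathbb E|\xi_m|^{2p}+t^{2p-1}\int_0^t\mathbb E|R_m(s)|^{2p}\dd s\Bigr).
\]
The right-hand side is finite by the induction hypothesis applied with $q=3p$ and by Assumption \ref{A1}.

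The main care needed is the bookkeeping: the induction hypothesis must be stated for all $p$, and uniformly on $[0,t]$, because Hölder raises the moment order at each level. Since $n$ is fixed and each level needs only finitely many higher moments, this is harmless. Pathwise existence of $\bar X_m$ is immediate because the equation is linear with continuous coefficients.
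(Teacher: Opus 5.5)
Your proof is correct. The overall skeleton follows the paper: induction on $m$, with $\bar X_0$ controlled by $|\bar X_0(t)|\le\max\{1,|\xi_0|\}$, $\bar X_1$ by It\^o's formula, Young and Gronwall, and the step for $\bar X_m$ fed by $6p$-th moments of lower-order coefficients. Your base cases are essentially the paper's; the paper reads the bound on $\bar X_0$ off the explicit solution rather than from the differential inequality.

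You differ in the inductive step. The paper applies the chain rule to $|\bar X_m|^{2p}$ and absorbs each forcing term by Young's inequality into $\eta|\bar X_m|^{2p}+C(\eta,p)\bigl(|\bar X_i|^{6p}+|\bar X_j|^{6p}+|\bar X_k|^{6p}\bigr)$. It then closes with Gronwall for $t\mapsto\mathbb E|\bar X_m(t)|^{2p}$. You instead turn the one-sided bound $v\cdot A(t)v\le|v|^2$ into the propagator estimate $\|\Phi(t,s)\|\le e^{t-s}$ and use the Duhamel formula. Both rest on the same structural fact.

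Your route gives the pathwise bound $|\bar X_m(t)|\le e^{t}\bigl(|\xi_m|+\int_0^t|R_m(s)|\dd s\bigr)$ before any expectation is taken. So it needs no justification for differentiating $\mathbb E|\bar X_m|^{2p}$ in time. It also makes explicit that the induction hypothesis must hold uniformly on $[0,t]$; the paper states its hypothesis pointwise in $t$ and uses the uniform version only implicitly inside Gronwall.

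The paper's energy method, in turn, is the one it reuses for the remainders $w_{\varepsilon,m}$. This matters most for $w_{\varepsilon,0}$, whose equation is genuinely nonlinear and where the dissipative term $-|w_{\varepsilon,0}|^{2p+2}$ is what closes the estimate.
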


\begin{proof}
    We prove the estimate for $m\in\{0,\dots,n\}$ by induction on $m$.

	\noindent\textbf{Base case $m=0$:}
	Recall that $\bar{X}_0$ is the solution of \eqref{eq-barX0-intro}.
$ \bar{X}_0(t)$ admits the explicit form
\begin{align}\label{eq-explicit-X0t}
	\bar{X}_0(t)=\frac{\xi_0}{\sqrt{|\xi_0|^2+\left(1-|\xi_0|^2\right) e^{-2t}}},
\end{align}
which implies that
\begin{align}\label{eq-bdd-X0t}
\min\{1,|\xi_0|^2\}\leq	|\bar{X}_0(t)|^2\leq \max\{1,|\xi_0|^2\}.
\end{align}
Hence, using Assumption \ref{A1}, we obtain
\begin{equation}\label{eq-Lp-0}
	\mathbb{E}|\bar{X}_{0}(t)|^{2p}\leq 1+\mathbb{E}|\xi_0|^{2p} \leq C(p).
\end{equation}
This completes the proof for $m=0$.

\noindent\textbf{Base case $m=1$:}
$ \bar{X}_1 $ is the solution of \eqref{eq-barX1-intro}.
Applying It\^o's formula to $|\bar{X}_1|^{2p}$, we have
\begin{equation}\label{ito-X1}
	\begin{aligned}
		\frac{1}{2p} \mathrm{d}|\bar{X}_{1}|^{2p}   &=(1-|\bar{X}_{0}|^2)|\bar{X}_{1}|^{2p}\dd t-2(\bar{X}_{0}\cdot \bar{X}_{1})^2|\bar{X}_{1}|^{2p-2}\dd t\\&+\sqrt{2}|\bar{X}_{1}|^{2p-2}\bar{X}_{1}\cdot\dd W_t+(d+2p-2)|\bar{X}_{1}|^{2p-2}\dd t.
	\end{aligned}
\end{equation}
 The term $-2(\bar{X}_{0}\cdot \bar{X}_{1})^2|\bar{X}_{1}|^{2p-2}$ is non-positive, and,
 by Young's inequality, for any $\eta\in(0,1)$,
\begin{equation*}
	(d+2p-2)|\bar{X}_{1}|^{2p-2}\leq C(\eta,p)+\eta|\bar{X}_{1}|^{2p}.
\end{equation*}
Taking expectation in \eqref{ito-X1} and noting that the martingale term vanishes, we obtain 
\begin{equation*}
	\frac{\mathrm{d}}{\mathrm{d}t}\mathbb{E}|\bar{X}_{1}|^{2p}
	\leq C(p)\mathbb{E}|\bar{X}_{1}|^{2p}+C(p).
\end{equation*}
Applying Gronwall's inequality yields
\begin{equation*}
	\mathbb{E}|\bar{X}_{1}(t)|^{2p}\leq C(p,t)\mathbb{E}|\xi_1|^{2p}+C(p,t) .
\end{equation*}

\noindent\textbf{Inductive step:} 
Fix $m\in\{2,\dots,n\}$ and assume that for every $l\in\{0,\dots,m-1\}$ and every $q\ge1$,
\begin{equation}\label{induction-1}
\mathbb{E}|\bar X_l(t)|^{2q}\le C(q,t,n).
\end{equation}
We prove the estimate for $\bar X_m$ with exponent $2p$.

Applying the chain rule to $|\bar{X}_m|^{2p}$ yields
\begin{equation*}
	\begin{aligned}
		\frac{1}{2p} \frac{\mathrm{d}|\bar{X}_{m}|^{2p}}{    \mathrm{d}t}=&(1-|\bar{X}_{0}|^2)|\bar{X}_{m}|^{2p}-2(\bar{X}_{0}\cdot \bar{X}_{m})^2|\bar{X}_{m}|^{2p-2}\\&-\sum_{(i,j)\in \mathcal{D}_2^m}[(\bar{X}_{i}\cdot\bar{X}_{j})(\bar{X}_{0}\cdot\bar{X}_{m})+2(\bar{X}_{i}\cdot\bar{X}_{0})(\bar{X}_{j}\cdot\bar{X}_{m})]|\bar{X}_{m}|^{2p-2}\\&-\sum_{(i,j,k)\in \mathcal{D}_3^m}(\bar{X}_{i}\cdot\bar{X}_{j})(\bar{X}_{k}\cdot\bar{X}_{m})|\bar{X}_{m}|^{2p-2}.
	\end{aligned}
\end{equation*}
We now use Young's inequality to bound the term $(\bar{X}_{i}\cdot\bar{X}_{j})(\bar{X}_{k}\cdot\bar{X}_{m})|\bar{X}_{m}|^{2p-2}$ for instance. For any $\eta\in(0,1)$ and $(i,j,k)\in \mathcal{D}^m_3$,
\begin{equation*}
	\begin{aligned}
\left|(\bar{X}_{i}\cdot\bar{X}_{j})(\bar{X}_{k}\cdot\bar{X}_{m})|\bar{X}_{m}|^{2p-2}\right|&\leq \eta |\bar{X}_{m}|^{2p}+C(\eta,p)\left(|\bar{X}_{i}||\bar{X}_{j}||\bar{X}_{k}|\right)^{2p}\\&\leq \eta |\bar{X}_{m}|^{2p}+C(\eta,p)\left(|\bar{X}_{i}|^{6p}+|\bar{X}_{j}|^{6p}+|\bar{X}_{k}|^{6p}\right).
	\end{aligned}
\end{equation*}
Summing over $i,j,k$ and applying the induction hypothesis \eqref{induction-1}, we arrive at 
\begin{equation*}
	\frac{\mathrm{d}}{\mathrm{d}t}\mathbb{E}|\bar{X}_{m}|^{2p}
	\leq C(p)\mathbb{E}|\bar{X}_{m}|^{2p}+C(p,n).
\end{equation*}
It follows from Gronwall's inequality that
\begin{equation*}
	\mathbb{E}|\bar{X}_{m}(t)|^{2p}\leq C(p,t,n)\mathbb{E}|\xi_m|^{2p}+C(p,t,n) .
\end{equation*}
This completes the inductive step.
\end{proof}

\subsection{Convergence in $L^{2p}$}

Next, we turn to the estimates for the remainder terms. We recall that
 \begin{align*}
w_{\varepsilon, m}:=\frac{X_\varepsilon-\sum_{k=0}^{m}\varepsilon^{k/2}\bar{X}_k}{\varepsilon^{m/2}},\quad 0\leq m\leq n.
\end{align*}

\noindent\textit{Proof of Proposition \ref{thm-dyna-exp-intro}}.
This proposition can also be proved by induction on $m$.
	
 \noindent\textbf{Base case $m=0$:} 
A direct computation shows that $w_{\varepsilon, 0}$ is the solution of
	\begin{equation*}
		\begin{aligned}
		\mathrm{d}w_{\varepsilon, 0}(t)&=\left( w_{\varepsilon, 0}  -|\bar{X}_{0}|^2w_{\varepsilon, 0}  -2(\bar{X}_{0}\cdot w_{\varepsilon, 0})\bar{X}_{0}-2(\bar{X}_{0}\cdot w_{\varepsilon, 0})w_{\varepsilon, 0}-\bar{X}_{0}|w_{\varepsilon, 0}|^2 -|w_{\varepsilon, 0}|^2w_{\varepsilon, 0} \right)\dd t+\sqrt{2\varepsilon}\dd W_t,\\
		w_{\varepsilon, 0}(0)&=\xi_\varepsilon-\xi_0.
			\end{aligned}
\end{equation*}
Applying It\^o's formula to $|w_{\varepsilon, 0}|^{2p}$,  we have
\begin{equation*}
	\begin{aligned}
		\frac{1}{2p} \mathrm{d}|w_{\varepsilon, 0}|^{2p}&=(1-|\bar{X}_{0}|^2)|w_{\varepsilon, 0}|^{2p}\dd t-2(\bar{X}_{0}\cdot w_{\varepsilon, 0})^2|w_{\varepsilon, 0}|^{2p-2}\dd t-3(\bar{X}_{0}\cdot w_{\varepsilon, 0})|w_{\varepsilon, 0}|^{2p}\dd t\\&-|w_{\varepsilon, 0}|^{2p+2}\dd t+\sqrt{2\varepsilon}|w_{\varepsilon, 0}|^{2p-2}w_{\varepsilon, 0}\cdot\dd W_t+\varepsilon(d+2p-2)|w_{\varepsilon, 0}|^{2p-2}\dd t.
	\end{aligned}
\end{equation*}
Taking the expectations yields
\begin{equation}\label{eq-w0-0}
\begin{aligned}
	\frac{\mathrm{d}}{\mathrm{d}t}\mathbb{E}|w_{\varepsilon,0}|^{2p}
	\leq 2p&\mathbb{E}\Big[|w_{\varepsilon,0}|^{2p}-|\bar{X}_{0}|^2|w_{\varepsilon,0}|^{2p}-2(\bar{X}_{0}\cdot w_{\varepsilon, 0})^2|w_{\varepsilon, 0}|^{2p-2}-|w_{\varepsilon,0}|^{2p+2}\\&+3|\bar{X}_{0}\cdot w_{\varepsilon,0}||w_{\varepsilon,0}|^{2p}
	+\varepsilon(d+2p-2)|w_{\varepsilon,0}|^{2p-2}\Big].
\end{aligned}    
\end{equation}
Young's inequality and $|\bar{X}_{0}\cdot w_{\varepsilon,0}|\leq |\bar{X}_{0}|\,| w_{\varepsilon,0}| $ imply that
\begin{equation}\label{eq-w0-1}
\begin{aligned}
3|\bar{X}_{0}\cdot w_{\varepsilon,0}||w_{\varepsilon,0}|^{2p}&\leq 3 (\bar{X}_{0}\cdot w_{\varepsilon, 0})^2|w_{\varepsilon, 0}|^{2p-2} +\frac{3}{4}|w_{\varepsilon,0}|^{2p+2}\\&\leq |\bar{X}_{0}|^2|w_{\varepsilon,0}|^{2p}+2(\bar{X}_{0}\cdot w_{\varepsilon, 0})^2|w_{\varepsilon, 0}|^{2p-2}+\frac{3}{4}|w_{\varepsilon,0}|^{2p+2}.
\end{aligned}    
\end{equation}
	Moreover, for the quadratic variation term,  for any $\eta \in (0,1)$, using Young's inequality again, we obtain
\begin{equation}\label{eq-w0-2}
	(d+2p-2)\varepsilon|w_{\varepsilon,0}|^{2p-2}\leq C(\eta,p)\varepsilon^p+\eta|w_{\varepsilon,0}|^{2p}.
\end{equation}
Therefore, substituting \eqref{eq-w0-1} and \eqref{eq-w0-2} into \eqref{eq-w0-0} yields
\begin{equation*}
	\frac{\mathrm{d}}{\mathrm{d}t}\mathbb{E}|w_{\varepsilon,0}|^{2p}
	\leq 2p(1+\eta)\mathbb{E}|w_{\varepsilon,0}|^{2p}-\frac{p}{2}\mathbb{E}|w_{\varepsilon,0}|^{2p+2}+C(\eta)\varepsilon^p.
\end{equation*}
By Gronwall's inequality and Assumption \ref{A1},
\begin{equation}\label{eq-Lp-w0}
	\mathbb{E}|w_{\varepsilon,0}(t)|^{2p}+\frac{p}{2}\int_{0}^{t}	\mathbb{E}|w_{\varepsilon,0}(s)|^{2p+2}\dd s\leq C(p,t)\left(\mathbb{E}|w_{\varepsilon,0}(0)|^{2p}+\varepsilon^p\right) \leq C(p,t)\varepsilon^p.
\end{equation}

 \noindent\textbf{Base case $m=1$:}  
 $w_{\varepsilon, 1}=\frac{w_{\varepsilon,0}}{\sqrt{\varepsilon}}-\bar{X}_1$ satisfies
\begin{align*}
	\mathrm{d}w_{\varepsilon,1}=\Big( (1-|\bar{X}_{0}|^2)w_{\varepsilon, 1}   &-2(\bar{X}_{0}\cdot w_{\varepsilon, 1})\bar{X}_{0}-2\varepsilon^{-1/2}(\bar{X}_{0}\cdot w_{\varepsilon,0})w_{\varepsilon, 0}   \\&-\varepsilon^{-1/2}|w_{\varepsilon,0}|^2\bar{X}_{0} -\varepsilon^{-1/2}w_{\varepsilon,0}|w_{\varepsilon, 0}|^2\Big)   \dd t.
\end{align*}
For every $ p\geq 1 $, we have
\begin{equation}\label{eq-w1-0}
\begin{aligned}
\frac{1}{2p} \frac{\mathrm{d}|w_{\varepsilon, 1}|^{2p}}{    \mathrm{d}t}=&(1-|\bar{X}_{0}|^2)|w_{\varepsilon, 1}|^{2p}-2(\bar{X}_{0}\cdot w_{\varepsilon, 1})^2|w_{\varepsilon, 1}|^{2p-2}-2\varepsilon^{-1/2} (\bar{X}_{0}\cdot w_{\varepsilon,0})(w_{\varepsilon, 0}\cdot w_{\varepsilon, 1})|w_{\varepsilon, 1}|^{2p-2}\\&-\varepsilon^{-1/2}|w_{\varepsilon, 0}|^{2}(\bar{X}_0\cdot w_{\varepsilon, 1})|w_{\varepsilon, 1}|^{2p-2}- \varepsilon^{-1/2}|w_{\varepsilon, 0}|^{2}(w_{\varepsilon, 0}\cdot w_{\varepsilon, 1})|w_{\varepsilon, 1}|^{2p-2}.
\end{aligned}    
\end{equation}
By Young's inequality,
\begin{equation}\label{eq-w1-1}
\begin{aligned}
2\varepsilon^{-1/2}\left| (\bar{X}_{0}\cdot w_{\varepsilon,0})(w_{\varepsilon, 0}\cdot w_{\varepsilon, 1})|w_{\varepsilon, 1}|^{2p-2}\right|&\leq |w_{\varepsilon, 1}|^{2p}+C \varepsilon^{-p}|w_{\varepsilon, 0}|^{4p}|\bar{X}_0|^{2p}\\&\leq |w_{\varepsilon, 1}|^{2p}+C \varepsilon^{-3p}|w_{\varepsilon, 0}|^{8p}+C\varepsilon^{p}|\bar{X}_0|^{4p}.
\end{aligned}    
\end{equation}
Similarly,
\begin{equation}\label{eq-w1-2}
\begin{aligned}
	\varepsilon^{-1/2}|w_{\varepsilon, 0}|^{2}\left|(\bar{X}_0\cdot w_{\varepsilon, 1})|w_{\varepsilon, 1}|^{2p-2}\right|&\leq |w_{\varepsilon, 1}|^{2p}+C \varepsilon^{-p}|w_{\varepsilon, 0}|^{4p}|\bar{X}_0|^{2p}\\&\leq |w_{\varepsilon, 1}|^{2p}+C \varepsilon^{-3p}|w_{\varepsilon, 0}|^{8p}+C\varepsilon^{p}|\bar{X}_0|^{4p}.
\end{aligned}    
\end{equation}
Finally,
\begin{equation}\label{eq-w1-3}
\begin{aligned}\varepsilon^{-1/2}|w_{\varepsilon, 0}|^{2}|w_{\varepsilon, 0}\cdot w_{\varepsilon, 1}||w_{\varepsilon, 1}|^{2p-2}
	\leq |w_{\varepsilon,1}|^{2p}+C\varepsilon^{-p}|w_{\varepsilon, 0}|^{6p}.
\end{aligned}    
\end{equation}
Substituting \eqref{eq-w1-1}, \eqref{eq-w1-2} and \eqref{eq-w1-3} into \eqref{eq-w1-0},  and using \eqref{eq-Lp-0}, \eqref{eq-Lp-w0}, we deduce that
\begin{align*}
	\frac{\mathrm d}{\mathrm{d} t}\mathbb{E}|w_{\varepsilon,1}|^{2p}
	\leq 4\mathbb{E}|w_{\varepsilon,1}|^{2p}+C(p,t)\varepsilon^p.
\end{align*}
Applying Gronwall's inequality and Assumption \ref{A1}, we obtain that
\begin{align*}
	\mathbb{E}|w_{\varepsilon,1}(t)|^{2p}\leq  C(p,t)\left(\mathbb{E}|w_{\varepsilon,1}(0)|^{2p}+\varepsilon^p\right) \leq C(p,t)\varepsilon^p.
\end{align*}

\noindent\textbf{Inductive step:} 
Fix $m\in\{2,\dots,n\}$ and assume that for every $l\in\{0,\dots,m-1\}$ and every $q\ge1$,
 \begin{equation}\label{eq-wl}
    \mathbb{E}|w_{\varepsilon, l}(t)|^{2q}\leqslant	C(q,t,n)\varepsilon^{q}.
    \end{equation}
Since $m\geq 2$, by expanding the cubic drift term in \eqref{SDE-0} around $\bar X_0$ and arguing as in \cite[Lemma 6.1 and the proof of Theorem 6.5]{GWZ26}, one checks that $w_{\varepsilon, m}$ satisfies
\begin{align*}
	\frac{\mathrm{d}w_{\varepsilon, m}}{\mathrm{d}t}=&(1-|\bar{X}_{0}|^2)w_{\varepsilon, m}  -2(\bar{X}_{0}\cdot w_{\varepsilon, m})\bar{X}_{0}-\sum_{(i,j)\in \mathcal{D}_2^{m}}\Big(2(\bar{X}_{0}\cdot w_{\varepsilon, i})\bar{X}_{j}+(\bar{X}_{j}\cdot w_{\varepsilon, i})\bar{X}_{0}\Big)\\&-2\varepsilon^{-1/2}(\bar{X}_{0}\cdot w_{\varepsilon, 0})w_{\varepsilon,m-1}-\varepsilon^{-1/2}(w_{\varepsilon,m-1}\cdot w_{\varepsilon, 0})\bar{X}_0-\sum_{(i,j,k)\in \mathcal{D}_3^{m}}(w_{\varepsilon, i}\cdot\bar{X}_{j})\bar{X}_k\\&-\sum_{(j,k)\in \mathcal{D}_2^{m}:j>1}\big(\varepsilon^{-1/2}(w_{\varepsilon, 0}\cdot w_{\varepsilon,j-1})\bar{X}_{k}\big)-\varepsilon^{-1}|w_{\varepsilon,0}|^2w_{\varepsilon,m-2},
\end{align*}
 where $\mathcal{D}_2^{m}$ and $\mathcal{D}_3^{m}$ are defined by \eqref{D-ni}.
For every $ p\geq 1 $, we have
\begin{align*}
	\frac{1}{2p} &\frac{\mathrm{d}|w_{\varepsilon, m}|^{2p}}{  \mathrm{d}t}=(1-|\bar{X}_{0}|^2)|w_{\varepsilon, m}|^{2p}-2(\bar{X}_{0}\cdot w_{\varepsilon, m})^2|w_{\varepsilon, m}|^{2p-2}\\&-\sum_{(i,j)\in \mathcal{D}_2^{m}}\big(2 (\bar{X}_{0}\cdot w_{\varepsilon,i})(\bar{X}_j\cdot w_{\varepsilon, m})|w_{\varepsilon, m}|^{2p-2}+(\bar{X}_j\cdot w_{\varepsilon, i})(\bar{X}_0\cdot w_{\varepsilon, m})|w_{\varepsilon, m}|^{2p-2}\big)\\&-
	\varepsilon^{-1/2}\big(2 (\bar{X}_{0}\cdot w_{\varepsilon,0})(w_{\varepsilon, m-1}\cdot w_{\varepsilon, m})|w_{\varepsilon, m}|^{2p-2}+(w_{\varepsilon, m-1}\cdot w_{\varepsilon, 0})(\bar{X}_0\cdot w_{\varepsilon, m})|w_{\varepsilon, m}|^{2p-2}\big)
	\\&-\sum_{(i,j,k)\in \mathcal{D}_3^{m}}(w_{\varepsilon, i}\cdot\bar{X}_{j})(\bar{X}_k\cdot w_{\varepsilon, m})|w_{\varepsilon, m}|^{2p-2}-\varepsilon^{-1/2}\sum_{(j,k)\in \mathcal{D}_2^{m}:j>1}(w_{\varepsilon, 0}\cdot w_{\varepsilon,j-1})(\bar{X}_{k}\cdot w_{\varepsilon, m})|w_{\varepsilon, m}|^{2p-2}\\&- \varepsilon^{-1}|w_{\varepsilon, 0}|^{2}(w_{\varepsilon, m-2}\cdot w_{\varepsilon, m})|w_{\varepsilon, m}|^{2p-2}.
\end{align*}
 The estimates are similar to the case $m=1$. For clarity, we illustrate the argument for the last two terms, as other terms can be handled analogously.
 According to Young's inequality,
 \begin{align*}
 	&\varepsilon^{-1/2}\left| (w_{\varepsilon, 0}\cdot w_{\varepsilon,j-1})(\bar{X}_{k}\cdot w_{\varepsilon, m})\right||w_{\varepsilon, m}|^{2p-2}\\\leq &|w_{\varepsilon, m}|^{2p}+C \varepsilon^{-p}|w_{\varepsilon, 0}|^{2p}|w_{\varepsilon, j-1}|^{2p}|\bar{X}_k|^{2p}\\\leq& |w_{\varepsilon, m}|^{2p}+C \varepsilon^{-3p}|w_{\varepsilon, 0}|^{8p}+C \varepsilon^{-3p}|w_{\varepsilon, j-1}|^{8p}+C\varepsilon^{p}|\bar{X}_k|^{4p}.
 \end{align*}
 Similarly,
 \begin{align*}
 	\varepsilon^{-1}|w_{\varepsilon, 0}|^{2}(w_{\varepsilon, m-2}\cdot w_{\varepsilon, m})|w_{\varepsilon, m}|^{2p-2}\leq& |w_{\varepsilon, m}|^{2p}+C \varepsilon^{-2p}|w_{\varepsilon, 0}|^{4p}|w_{\varepsilon, m-2}|^{2p}\\\leq& |w_{\varepsilon, m}|^{2p}+C \varepsilon^{-3p}|w_{\varepsilon, 0}|^{8p}+C\varepsilon^{-p}|w_{\varepsilon, m-2}|^{4p}.
 \end{align*}
Taking the expectation and using Proposition \ref{prop-lp-estimate} and the induction hypothesis \eqref{eq-wl}, we deduce that
\begin{align*}
	\frac{\mathrm d}{\mathrm{d} t}\mathbb{E}|w_{\varepsilon,m}|^{2p}
	\leq  C(p,n)\mathbb{E}|w_{\varepsilon,m}|^{2p}+C(p,t,n)\varepsilon^p.
\end{align*}
Applying Gronwall's inequality and Assumption \ref{A1}, we conclude that
\begin{align*}
	\mathbb{E}|w_{\varepsilon,m}(t)|^{2p}\leq  C(p,t,n)\left(\mathbb{E}|w_{\varepsilon,m}(0)|^{2p}+\varepsilon^p\right) \leq C(p,t,n)\varepsilon^p.
\end{align*}
\qed

\section{Weak asymptotic expansion}\label{sec-weak-expansion}
In this section, we aim to prove Proposition \ref{thm-weak-exp-intro}.
More precisely, for $m=0,1,\cdots,n$, we will derive an explicit formula for $a_m(t,F)$ and show that the remainder term 
 \begin{align*}
v_m^\varepsilon(t,F):=\frac{	\mathbb{E}\left[F(X_{\varepsilon}(t))\right]-\sum_{k=0}^m\varepsilon^{k/2}a_{k}(t,F)}{\varepsilon^{m/2}}\rightarrow 0, \text{ as } \varepsilon\rightarrow 0.
\end{align*}
 For simplicity, we omit the explicit dependence on $t$ in the notation $X_\varepsilon(t)$, $\bar X_k(t)$, and $a_m(t,F)$, etc.

\noindent\textit{Proof of Proposition \ref{thm-weak-exp-intro}.}
We divide the proof into four steps. First, we identify the coefficients by a Taylor expansion; Then, we estimate the two remainder terms, respectively. Finally, we combine these estimates to obtain the desired conclusion.

\noindent\textbf{Step 1: Taylor expansion and identification of the coefficients.}  We recall that 
 \begin{align*}
w_{\varepsilon, m}=\frac{X_\varepsilon-\sum_{k=0}^{m}\varepsilon^{k/2}\bar{X}_k}{\varepsilon^{m/2}}.
\end{align*}
Hence, \begin{equation}\label{eq:w0-expansion}
w_{\varepsilon,0}
=\sum_{j=1}^m\varepsilon^{j/2}\bar X_j+\varepsilon^{m/2}w_{\varepsilon,m}.
\end{equation}
Since $F\in C^{n+1}(\mathbb{R}^d)$, according to Taylor's theorem at $\bar X_0$, there exists $\zeta_\varepsilon$ between $X_\varepsilon$ and $\bar{X}_0$ such that
\begin{equation}\label{eq:Taylor}
F(X_\varepsilon)
=F(\bar X_0)
+\sum_{i=1}^m\frac{1}{i!}D^iF(\bar X_0)[w_{\varepsilon,0}]^i
+R_m^\varepsilon,
\end{equation}
with $R_m^\varepsilon=\frac{1}{(m+1)!} D^{m+1}F(\zeta_\varepsilon)[w_{\varepsilon,0}]^{m+1}$. By \eqref{assump-F},
\begin{equation}\label{eq:Taylor-R}
|R_m^\varepsilon|
\leq \frac{1}{(m+1)!} \|D^{m+1}F(\zeta_\varepsilon)\| |w_{\varepsilon,0}|^{m+1}\le C\frac{(1+|\bar{X}_0|^q + |X_\varepsilon|^q)}{(m+1)!}|w_{\varepsilon,0}|^{m+1}.
\end{equation}
 For each $i\in\{1,\dots,m\}$, expanding \eqref{eq:w0-expansion} gives
\begin{align*}
	&D^iF(\bar X_0)[w_{\varepsilon,0}]^i
	=
	D^iF(\bar X_0)\Big[\sum_{j=1}^m\varepsilon^{j/2}\bar X_j+\varepsilon^{m/2}w_{\varepsilon,m}\Big]^i\\
	&=
	\sum_{j_1,\dots,j_i=1}^m
	\varepsilon^{(j_1+\cdots+j_i)/2}
	D^iF(\bar X_0)(\bar X_{j_1},\dots,\bar X_{j_i})\\
	&\quad+
	\sum_{l=1}^i \binom{i}{l}
	\sum_{j_1,\dots,j_{i-l}=1}^m
	\varepsilon^{(j_1+\cdots+j_{i-l}+lm)/2}
	D^iF(\bar X_0)\big(
	\bar X_{j_1},\dots,\bar X_{j_{i-l}},
	w_{\varepsilon,m},\dots,w_{\varepsilon,m}
	\big)\\
	&=
	\sum_{k=1}^m \varepsilon^{k/2}
	\sum_{(j_1,\dots,j_i)\in \mathcal D_i^k}
	D^iF(\bar X_0)(\bar X_{j_1},\dots,\bar X_{j_i})+
	\sum_{\substack{j_1,\dots,j_i\in\{1,\dots,m\}\\ j_1+\cdots+j_i\ge m+1}}
	\varepsilon^{(j_1+\cdots+j_i)/2}
	D^iF(\bar X_0)(\bar X_{j_1},\dots,\bar X_{j_i})\\
	&\quad+
	\sum_{l=1}^i \binom{i}{l}
	\sum_{j_1,\dots,j_{i-l}=1}^m
	\varepsilon^{(j_1+\cdots+j_{i-l}+lm)/2}
	D^iF(\bar X_0)\big(
	\bar X_{j_1},\dots,\bar X_{j_{i-l}},
	w_{\varepsilon,m},\dots,w_{\varepsilon,m}
	\big).
\end{align*}
Therefore,
\begin{align*}
	\sum_{i=1}^m \frac1{i!}D^iF(\bar X_0)[w_{\varepsilon,0}]^i
	&=
	\sum_{k=1}^m \varepsilon^{k/2}
	\sum_{i=1}^m \frac1{i!}
	\sum_{(j_1,\dots,j_i)\in \mathcal D_i^k}
	D^iF(\bar X_0)(\bar X_{j_1},\dots,\bar X_{j_i})
	+\widetilde R_m^\varepsilon,
\end{align*}
where
\begin{equation}\label{eq:tilde R}
	\begin{aligned}
	\widetilde R_m^\varepsilon
	&:=
	\sum_{i=1}^m \frac1{i!}
	\sum_{\substack{j_1,\dots,j_i\in\{1,\dots,m\}\\ j_1+\cdots+j_i\ge m+1}}
	\varepsilon^{(j_1+\cdots+j_i)/2}
	D^iF(\bar X_0)(\bar X_{j_1},\dots,\bar X_{j_i})\\
	&\quad+
	\sum_{i=1}^m \frac1{i!}
	\sum_{l=1}^i \binom{i}{l}
	\sum_{j_1,\dots,j_{i-l}=1}^m
	\varepsilon^{(j_1+\cdots+j_{i-l}+lm)/2}
	D^iF(\bar X_0)\big(
	\bar X_{j_1},\dots,\bar X_{j_{i-l}},	w_{\varepsilon,m},\dots,w_{\varepsilon,m}
	\big).
		\end{aligned}
\end{equation}
According to \eqref{eq-an-intro}, we deduce that
\begin{equation}\label{eq:Taylor-structured}
	\mathbb E\left[\sum_{i=1}^m\frac1{i!}D^iF(\bar X_0)[w_{\varepsilon,0}]^i\right]
	=
	\sum_{k=1}^m \varepsilon^{k/2} a_k(F)+\mathbb E\widetilde R_m^\varepsilon.
\end{equation}
Taking expectations in \eqref{eq:Taylor} and using \eqref{eq:Taylor-structured}, we obtain
$$
\mathbb{E} \left[F(X_\varepsilon)\right]
=\sum_{k=0}^m\varepsilon^{k/2}a_k(F)
+\mathbb{E} R_m^\varepsilon+\mathbb{E}\widetilde R_m^\varepsilon,
$$
so that
\begin{equation}\label{eq:vn-expression}
v_m^\varepsilon(F)=
\frac{\mathbb{E} R_m^\varepsilon+\mathbb{E}\widetilde R_m^\varepsilon}{\varepsilon^{m/2}}.
\end{equation}

\medskip\noindent\textbf{Step 2: Estimate of $R_m^\varepsilon$.}
Applying Cauchy-Schwarz inequality to \eqref{eq:Taylor-R}, we obtain
\begin{equation*}
\mathbb{E}|R_m^\varepsilon| \le \frac{C}{(m+1)!} \left( \mathbb{E}\bigl[ (1 + |X_\varepsilon|^q + |\bar{X}_0|^q)^2 \bigr] \right)^{1/2} \left( \mathbb{E}|w_{\varepsilon,0}|^{2(m+1)} \right)^{1/2}.
\end{equation*}
Since $X_\varepsilon=\bar X_0+w_{\varepsilon,0}$, Proposition \ref{prop-lp-estimate} and Proposition \ref{thm-dyna-exp-intro} imply that
$$
\mathbb E\bigl[(1+|X_\varepsilon|^q+|\bar X_0|^q)^2\bigr]\le C(t,n,q).
$$ Moreover, Proposition \ref{thm-dyna-exp-intro} yields $\mathbb{E}|w_{\varepsilon,0}|^{2(m+1)} \le C(t,n) \varepsilon^{m+1}$. Consequently, we arrive at
\begin{equation}\label{eq:Rn-bound}
\mathbb{E}|R_m^\varepsilon|
\le C(t,n,F)\varepsilon^{(m+1)/2},\quad\text{for all }\varepsilon\in(0,1).
\end{equation}

\medskip\noindent\textbf{Step 3: Estimate of $\widetilde R_m^\varepsilon$. }
 By the definition of $\widetilde R_m^\varepsilon$ in \eqref{eq:tilde R}, it suffices to estimate the two types of terms appearing there.

First, consider a term of the form
$$
T_\varepsilon
=
\frac{\varepsilon^{(j_1+\cdots+j_i)/2}}{i!}
D^iF(\bar X_0)(\bar X_{j_1},\dots,\bar X_{j_i}),
$$
where $1\le i\le m$, $j_1,\dots,j_i\in\{1,\dots,m\}$, and
$j_1+\cdots+j_i\ge m+1$.
By \eqref{assump-F} and H\"older's inequality,
$$
\mathbb E|T_\varepsilon|
\le
C\varepsilon^{(j_1+\cdots+j_i)/2}
\Bigl(\mathbb E(1+|\bar X_0|^q)^{i+1}\Bigr)^{1/(i+1)}
\prod_{\ell=1}^i
\bigl(\mathbb E|\bar X_{j_\ell}|^{i+1}\bigr)^{1/(i+1)}.
$$
It follows from Proposition \ref{prop-lp-estimate} that $\mathbb E(1+|\bar X_0|^q)^{i+1}$ and $\mathbb E|\bar X_{j_\ell}|^{i+1}$ are finite and independent of $\varepsilon$. Since
$j_1+\cdots+j_i\ge m+1$, we obtain
$$
\mathbb E|T_\varepsilon|
\le C(t,n,F)\varepsilon^{(m+1)/2}.
$$

Next, consider a term of the form
$$
T_\varepsilon
=
\frac{\varepsilon^{(j_1+\cdots+j_{i-l}+lm)/2}}{i!}
D^iF(\bar X_0)\big(
\bar X_{j_1},\dots,\bar X_{j_{i-l}},
w_{\varepsilon,m},\dots,w_{\varepsilon,m}
\big),
$$
where $1\le l\le i\le m$ and $j_1,\dots,j_{i-l}\in\{1,\dots,m\}$.
Again, using \eqref{assump-F} and H\"older's inequality,
\begin{align*}
	\mathbb E|T_\varepsilon|
	\le
	C\varepsilon^{(j_1+\cdots+j_{i-l}+lm)/2}
	\Bigl(\mathbb E(1+|\bar X_0|^q)^{i+1}\Bigr)^{1/(i+1)} 
	\prod_{r=1}^{i-l}
	\bigl(\mathbb E|\bar X_{j_r}|^{i+1}\bigr)^{1/(i+1)}
	\cdot
	\bigl(\mathbb E|w_{\varepsilon,m}|^{i+1}\bigr)^{l/(i+1)}.
\end{align*}
According to Proposition \ref{prop-lp-estimate}, the moments of $\bar X_{j_r}$ are bounded and independent of $\varepsilon$, while Proposition \ref{thm-dyna-exp-intro} yields
$$
\bigl(\mathbb E|w_{\varepsilon,m}|^{i+1}\bigr)^{1/(i+1)}
\le C\varepsilon^{1/2}.
$$
Hence,
$$
\mathbb E|T_\varepsilon|
\le
C(t,n,F)\varepsilon^{(j_1+\cdots+j_{i-l}+lm)/2}\varepsilon^{l/2}.
$$
Since $l\ge1$ and $j_1+\cdots+j_{i-l}+lm\ge m$, it follows that
$$
\mathbb E|T_\varepsilon|
\le C(t,n,F)\varepsilon^{(m+1)/2}.
$$
Summing over the finitely many terms in $\widetilde R_m^\varepsilon$, we obtain
\begin{equation}\label{eq:Rtilde-bound}
	\mathbb{E}|\widetilde R_m^\varepsilon|
	\le C(t,n,F)\varepsilon^{(m+1)/2},
	\quad\text{for all }\varepsilon\in(0,1).
\end{equation}

\medskip\noindent\textbf{Step 4: Conclusion.}
Combining \eqref{eq:vn-expression}, \eqref{eq:Rn-bound}, and \eqref{eq:Rtilde-bound}, we obtain
$$
|v_m^\varepsilon(F)|
\le C(t,n,F)\varepsilon^{1/2},\quad\text{for all }\varepsilon\in(0,1).
$$
This proves the desired weak expansion.
\qed

\section{Long-time behavior of the fluctuation expansion coefficients}\label{sec-long-time}

In this section, we study the long-time asymptotic behavior of the fluctuation expansion coefficients $(a_m(t,F))$. 
In the scalar-valued case $d=1$, we establish that each coefficient $a_m(t,F)$ converges to a finite limit exponentially fast as $t\to\infty$.
However, when $d>1$, due to the different structure of the potential function $V$, this convergence in general fails.

\subsection{Scalar-valued case}\label{sec-long-time-d=1}

In this subsection, we restrict ourselves to the case $d=1$.

\noindent\textit{Proof of Theorem \ref{thm-longtime-intro}.}
	To prove (i), recall that	$a_0(t,F)=\mathbb E F(\bar X_0(t))$,	where $\bar X_0$ is the solution to \eqref{eq-barX0-intro}. By \eqref{eq-explicit-X0t}, for every $\omega\in\Omega$, $\bar X_0(t,\omega)$ has the same sign as $\xi_0(\omega)$ for all $t\ge0$. Moreover, by Assumption \ref{A2} and \eqref{eq-bdd-X0t}, for all $t\geq 0$, almost surely,
$	\bar X_0(t)$ and $\operatorname{sgn}(\xi_0)$
lie in a deterministic compact set:
\begin{equation}\label{eq-I}
	\bar X_0(t),\operatorname{sgn}(\xi_0)\in I:=[-R_0,-r_0]\cup[r_0,R_0],
\end{equation}
with
$
r_0:=\min\{1,r_{\min}\},\, R_0:=\max\{1,r_{\max}\}
$, where $r_{\min}$ and $r_{\max}$ are from Assumption \ref{A2}. Since $F$ is continuous, the family $\{F(\bar X_0)\}_{t\ge0}$ is bounded. On the other hand, \eqref{eq-explicit-X0t} implies that, almost surely, 
$$
\bar X_0(t)\to \operatorname{sgn}(\xi_0),
\quad t\to\infty.
$$ Therefore, by the dominated convergence theorem,
$$
	\lim_{t\to\infty}a_0(t,F)
=\lim_{t\to\infty}\mathbb EF(\bar X_0(t))
	=\mathbb E\Big[\lim_{t\to\infty}F(\bar X_0(t))\Big]
	=F(1)\mathbb P(\xi_0>0)+F(-1)\mathbb P(\xi_0<0).
$$

For (ii) and (iii), since 
$d=1$, \eqref{eq-an-intro} reduces to
$$
	a_m(F)=\mathbb{E}\left[ \sum_{i=1}^{m}\frac{1}{i!}F^{(i)}(\bar{X}_0)S_{m,i} \right].
$$
The fluctuation coefficients $\bar X_k$ satisfy linear equations with random coefficient $1-3\bar X_0^2$. Define the integrating factor
$$
\Lambda(t):=\exp\left(\int_0^t\bigl(3\bar X_0(s)^2-1\bigr)\dd s\right).
$$
Using
$$
\int_0^t(\bar X_0(s)^2-1)\dd s
=\ln \frac{\xi_0}{\bar X_0(t)}
$$ and \eqref{eq-explicit-X0t},
we obtain
\begin{equation}\label{eq-Lambda}
	\Lambda(t)
	=e^{2t}\frac{\xi_0^3}{\bar X_0(t)^3}
	=e^{2t}\bigl(\xi_0^2+(1-\xi_0^2)e^{-2t}\bigr)^{3/2}.
\end{equation}
Consequently, for every $k\ge1$, almost surely,
\begin{equation}\label{Lambda-asym}
\lim_{t\to\infty}\Lambda(t)^{-k}\int_0^t\Lambda(s)^k\dd s=\frac{1}{2k}.
\end{equation}

We prove (ii) by induction on $m$, and for each fixed $m$, by downward induction on $i$, starting from $i=m$.

\noindent\textbf{Base case $m=1$:}
$S_{1,1}=\bar X_1$ solves
$$
\dd\bar X_1
=\bigl(1-3\bar X_0^2\bigr)\bar X_1\dd t+\sqrt2\dd W_t,
\quad
\bar X_1(0)=\xi_1.
$$
Hence
$$
\bar X_1(t)
=\Lambda(t)^{-1}\left(\xi_1+\int_0^t\Lambda(s)\sqrt2\dd W_s\right).
$$
Since $(W_t)$ is independent of $(\xi_0,\xi_1)$, taking conditional expectation  gives
$$
\mathbb E[\bar X_1(t)\mid\xi_0]
=\Lambda(t)^{-1}\mathbb E[\xi_1\mid\xi_0].
$$
By \eqref{eq-Lambda} and Assumption \ref{A2}, we have $\Lambda(t)^{-1}\to0$ as $t\to\infty$, almost surely. Moreover, by Assumption \ref{A1},
$$
\mathbb E\bigl[|\mathbb E[\xi_1\mid\xi_0]|\bigr]\le \mathbb E|\xi_1|<\infty.
$$Therefore,
$$
\mathbb E[\bar X_1(t)\mid\xi_0>0]\to0,
\quad
\mathbb E[\bar X_1(t)\mid\xi_0<0]\to0.
$$
It follows that
$$
c_{1,1}=\bar c_{1,1}=0.
$$
Since $\bar X_0(t)\in I$, almost surely for every $t>0$, and $F^\prime$ is bounded on $I$, we have
$$
\begin{aligned}
|a_1(t,F)|
=\left|\mathbb E\bigl[F^\prime(\bar X_0(t))\bar X_1(t)\bigr]\right| 
&=\left|\mathbb E\bigl[F^\prime(\bar X_0(t))\mathbb E[\bar X_1(t)\mid\xi_0]\bigr]\right| \\
&\le \sup_{x\in I}|F^\prime(x)|
\mathbb E\Bigl[\bigl|\mathbb E[\bar X_1(t)\mid\xi_0]\bigr|\Bigr].
\end{aligned}
$$
Again by the dominated convergence theorem,
$$
\mathbb E\Bigl[\bigl|\mathbb E[\bar X_1(t)\mid\xi_0]\bigr|\Bigr]\to0.
$$
 Therefore,
$$
b_1(F)=0.
$$

	\medskip\noindent\textbf{Base case $m=2$ and $i=2$:}
			$S_{2,2} = (\bar X_1)^2$ satisfies
		\begin{equation*}
			\dd (\bar X_1)^2	= 2 \bigl(1 - 3 \bar X_0^2\bigr) (\bar X_1)^2 \dd t+ 2 \dd t+ 2 \sqrt{2}\bar X_1 \dd W_t.
		\end{equation*}
		The explicit solution  is
		\begin{equation*}
			(\bar X_1(t))^2	= \Lambda(t)^{-2}	\left(	\xi_1^2
			+ 2 \int_0^t \Lambda(s)^2 \dd s	+ \int_0^t \Lambda(s)^2 2 \sqrt{2} \bar X_1(s) \dd W_s
			\right).
		\end{equation*}
		Taking conditional expectation with respect to $\sigma(\xi_0)$ yields
		$$
		\mathbb E\bigl[(\bar X_1(t))^2\mid \xi_0\bigr]
		=\Lambda(t)^{-2}\left(
		\mathbb E[\xi_1^2\mid \xi_0]
		+2\int_0^t\Lambda(s)^2\dd s
		\right).
		$$
		By \eqref{eq-Lambda}, we have $\Lambda(t)^{-2}\to0$ as $t\to\infty$, almost surely. Similar to the proof of $m=1$, $$\Lambda(t)^{-2}
		\mathbb E[\xi_1^2\mid \xi_0]\to 0,\quad \text{almost surely.}$$  Moreover, by \eqref{Lambda-asym},
$$
\Lambda(t)^{-2}\int_0^t\Lambda(s)^2\dd s\to \frac14 \quad \text{almost surely.}
$$
Therefore,
$$
\mathbb E\bigl[(\bar X_1(t))^2\mid \xi_0>0\bigr]\to \frac12,
\quad
\mathbb E\bigl[(\bar X_1(t))^2\mid \xi_0<0\bigr]\to \frac12.
$$
It follows that
$$
c_{2,2}=\bar c_{2,2}=\frac12.
$$
Since $F^{\prime\prime}(\bar X_0(t))$ is $\sigma(\xi_0)$-measurable, we have
$$
\mathbb E\bigl[F^{\prime\prime}(\bar X_0(t))(\bar X_1(t))^2\mid \xi_0\bigr]
=
F^{\prime\prime}(\bar X_0(t))
\mathbb E\bigl[(\bar X_1(t))^2\mid \xi_0\bigr].
$$
On the event $\{\xi_0>0\}$, we have $\bar X_0(t)\to1$, and on the event $\{\xi_0<0\}$, we have $\bar X_0(t)\to-1$. Since $\bar X_0(t)\in I$ almost surely for all $t\ge0$, and the function $F^{\prime\prime}$ is bounded on $I$, by the dominated convergence theorem,
$$
\mathbb E\bigl[F^{\prime\prime}(\bar X_0(t))(\bar X_1(t))^2\mid \xi_0>0\bigr]
\to \frac12F^{\prime\prime}(1),
$$
and
$$
\mathbb E\bigl[F^{\prime\prime}(\bar X_0(t))(\bar X_1(t))^2\mid \xi_0<0\bigr]
\to \frac12F^{\prime\prime}(-1).
$$

		\noindent\textbf{Inductive step:}
		Let $m \ge 2$ and $i \in \{1, \dots, m\}$. Assume that \eqref{eq-c-n-intro} and \eqref{eq-b-n}  hold for indices $(m^\prime, i^\prime)$ such that $m^\prime < m$ or $m^\prime = m$ with $i^\prime > i$.
		
		We now establish the result for $S_{m,i}$.  By the multi-dimensional It\^o's formula (see, for example, \cite[Theorem 3.3]{RY99}), the dynamics of $S_{m,i}$ is given by
		\begin{align*}
			\dd S_{m,i}(t)
			&=\sum_{(j_1,\dots,j_i)\in\mathcal D_i^m}
			\dd\Bigl(\prod_{k=1}^i\bar X_{j_k}(t)\Bigr)\\
			&=\sum_{(j_1,\dots,j_i)\in\mathcal D_i^m}
			\biggl[
			\sum_{k=1}^i\Bigl(\prod_{a\ne k}\bar X_{j_a}\Bigr)\dd\bar X_{j_k}
			+\sum_{1\le a<b\le i}
			\Bigl(\prod_{p\ne a,b}\bar X_{j_p}\Bigr)\dd\langle\bar X_{j_a},\bar X_{j_b}\rangle
			\biggr].
		\end{align*}
		Substituting the SDEs for $\bar X_{j_k}$ from \eqref{eq-barXm-intro}, we evaluate the contribution of each term:
		\begin{enumerate}
			\item \textbf{Diffusion term:} Since $\dd W_t$ only appears in $\dd \bar X_1$, only indices $j_k=1$ contribute to the martingale part. Removing one index $1$ from an $i$-tuple in $\mathcal{D}_i^m$ leaves an $(i-1)$-tuple in $\mathcal{D}_{i-1}^{m-1}$. Accounting for the $i$ possible positions, we obtain $i \sqrt{2} S_{m-1, i-1} \dd W_t$. 
			\item \textbf{Linear drift term:} Collecting the term $(1-3\bar X_0^2)\bar X_{j_k}$ in $\dd \bar X_{j_k}$ for each $k$ produces $i(1-3\bar X_0^2) S_{m,i} \dd t$.
			\item \textbf{Quadratic variation term:} When $i>2$, the term $\dd \langle \bar X_{j_a}, \bar X_{j_b} \rangle = 2 \delta_{j_a, 1} \delta_{j_b, 1} \dd t$ is non-zero only when $j_a=j_b=1$. Removing this pair from $\mathcal{D}_i^m$ results in an $(i-2)$-tuple in $\mathcal{D}_{i-2}^{m-2}$. There are $\binom{i}{2}$ such pairs, contributing $i(i-1) S_{m-2, i-2} \dd t$.
			 For $i=2$, the quadratic variation term is nonzero only when
$(j_1,j_2)=(1,1)$, that is, when $m=2$, and in this case it contributes $2\dd t$.
			\item \textbf{Higher-order term:} For $j_k\ge2$, the cubic term in $\dd\bar X_{j_k}$ gives
			\begin{equation}\label{eq-high-order}
			-\sum_{(j_1,\dots,j_i)\in\mathcal D_i^m}
			\sum_{k:j_k\ge2}
			\Bigl(\prod_{a\ne k}\bar X_{j_a}\Bigr)
			\Bigl(\sum_{(a,b)\in\mathcal D_2^{j_k}}3\bar X_0\bar X_a\bar X_b+\sum_{(a,b,c)\in\mathcal D_3^{j_k}}\bar X_a\bar X_b\bar X_c\Bigr)\dd t.
			\end{equation}
            For $i\geq 2$, by \eqref{D-ni}, we note that, 
\begin{align*}
S_{m,i+2}=\sum_{(j_1,\dots,j_{i+2})\in\mathcal{D}_{i+2}^m} \left(\prod_{k=1}^{i+2}\bar{X}_{j_k}\right)&=\sum_{l=3}^{m-1}\left(\sum_{(j_1,j_2,j_3)\in\mathcal{D}_{3}^l} \bar{X}_{j_1}\bar{X}_{j_2}\bar{X}_{j_3}\right)\sum_{(j_4,\dots,j_{i+2})\in\mathcal{D}_{i-1}^{m-l}} \prod_{k=4}^{i+2}\bar{X}_{j_k}\\&=\sum_{l=3}^{m-1}S_{l,3}S_{m-l,i-1}=\sum_{l=2}^{m-1}S_{l,3}S_{m-l,i-1}.
\end{align*}Similarly, \begin{align*}
S_{m,i+1}=\sum_{l=2}^{m-1}S_{l,2}S_{m-l,i-1}.
\end{align*}
Hence, writing $l:=j_k$ in \eqref{eq-high-order} and summing over all possible choices yields
			\begin{align*}
				-i\sum_{l=2}^{m-1}
				\Bigl(\sum_{(a,b)\in\mathcal D_2^{l}}3\bar X_0\bar X_a\bar X_b+\sum_{(a,b,c)\in\mathcal D_3^{l}}\bar X_a\bar X_b\bar X_c\Bigr)
				S_{m-l,i-1}\dd t&=-i\sum_{l=2}^{m-1}
				\Bigl(3\bar X_0S_{l,2}+S_{l,3}\Bigr)
				S_{m-l,i-1}\dd t\\&=-i(3\bar X_0S_{m,i+1}+S_{m,i+2})\dd t.
			\end{align*}
		\end{enumerate}
		Collecting all terms, we obtain 
		\begin{equation*}
			\begin{aligned}
				\dd S_{m,i}(t)
				&= i\sqrt2S_{m-1,i-1}\dd W_t
				+ i\bigl(1-3\bar X_0^2\bigr)S_{m,i}\dd t
				+i(i-1)S_{m-2,i-2}\dd t\\
				&\quad-i(3\bar X_0S_{m,i+1}+S_{m,i+2})\dd t.
			\end{aligned}
		\end{equation*}

         For $i=1$, we also have \begin{align*}
       \dd S_{m,1}(t)=\dd \bar{X}_m(t)&=\bigl(1-3\bar X_0^2\bigr)\bar{X}_m\dd t-(3\bar X_0S_{m,2}+S_{m,3})\dd t\\&=i\bigl(1-3\bar X_0^2\bigr)S_{m,i}\dd t-i(3\bar X_0S_{m,i+1}+S_{m,i+2})\dd t.
        \end{align*}
		The mild solution is given by
		\begin{align*}
			S_{m,i}(t)=\Lambda(t)^{-i}\Big(S_{m,i}(0)&+i\sqrt2
			\int_0^t \Lambda(s)^i S_{m-1,i-1}(s)\dd W_s+ i(i-1)\int_0^t \Lambda(s)^i S_{m-2,i-2}(s)\dd s\\& -i\int_0^t \Lambda(s)^i (3\bar X_0S_{m,i+1}+S_{m,i+2})\dd s\Big).
		\end{align*}
		Integrating from $0$ to $t$ and taking conditional expectations given $\xi_0$, we find that the martingale term vanishes and
		\begin{equation}\label{eq-Smi-int}
			\begin{aligned}
				\mathbb E[S_{m,i}(t)\mid\xi_0]
				&= \Lambda(t)^{-i}\mathbb E[S_{m,i}(0)\mid\xi_0]+\Lambda(t)^{-i}\int_0^t 
				\Lambda(s)^i\Bigl[i(i-1)\mathbb E[S_{m-2,i-2}(s)\mid\xi_0]\\
				&
				-i\mathbb E\bigl[3\bar X_0(s)S_{m,i+1}(s)+S_{m,i+2}(s)\mid\xi_0\bigr]\Bigr]\dd s.
			\end{aligned}
		\end{equation}
		For the initial data term, by Assumption \ref{A1}, Assumption \ref{A2}, and \eqref{eq-Lambda}, we have, almost surely, 
		\begin{align}\label{S-0}
			\left|\mathbb{E}\left[ \Lambda(t)^{-i}S_{m,i}(0) \mid\xi_0\right]\right|=	\left| \Lambda(t)^{-i} \right|\left|\mathbb{E}[S_{m,i}(0)\mid \xi_0]\right|\to 0.
		\end{align}
By the induction hypothesis, we have
$$
i(i-1)\mathbb E[S_{m-2,i-2}(s)\mid\xi_0>0]
-i\mathbb E\bigl(3\bar X_0(s)S_{m,i+1}(s)+S_{m,i+2}(s)\mid\xi_0>0\bigr)
$$
converges, as $s\to\infty$, to
$$
i(i-1)c_{m-2,i-2}-i\bigl(3c_{m,i+1}+c_{m,i+2}\bigr).
$$
Similarly, on the event $\{\xi_0<0\}$, we have
$$
i(i-1)\mathbb E[S_{m-2,i-2}(s)\mid\xi_0<0]
-i\mathbb E\bigl[3\bar X_0(s)S_{m,i+1}(s)+S_{m,i+2}(s)\mid\xi_0<0\bigr]
$$
converges to
$$
i(i-1)\bar c_{m-2,i-2}+i\bigl(3\bar c_{m,i+1}-\bar c_{m,i+2}\bigr).
$$
Therefore, using \eqref{eq-Smi-int},\eqref{S-0}, and \eqref{Lambda-asym}
we obtain
$$
c_{m,i}
=\frac{i-1}{2}c_{m-2,i-2}-\frac{3}{2}c_{m,i+1}-\frac{1}{2}c_{m,i+2},
$$
and
$$
\bar c_{m,i}
=\frac{i-1}{2}\bar c_{m-2,i-2}+\frac{3}{2}\bar c_{m,i+1}-\frac{1}{2}\bar c_{m,i+2}.
$$
This proves \eqref{eq-c-n-intro}.
 Next we prove \eqref{eq-b-n}. Since
$$
a_m(t,F)=\mathbb E\left[\sum_{i=1}^m \frac{1}{i!}F^{(i)}(\bar X_0(t))S_{m,i}(t)\right],
$$
it suffices to pass to the limit in each summand. Since the derivatives $F^{(i)}$ are bounded on $I$, by dominated convergence theorem,
$$
\mathbb E\bigl[F^{(i)}(\bar X_0(t))S_{m,i}(t)\mid\xi_0>0\bigr]
\to F^{(i)}(1)c_{m,i},
$$
and
$$
\mathbb E\bigl[F^{(i)}(\bar X_0(t))S_{m,i}(t)\mid\xi_0<0\bigr]
\to F^{(i)}(-1)\bar c_{m,i}.
$$
Hence
$$
b_m(F)
=\mathbb P(\xi_0>0)\sum_{i=1}^m\frac{c_{m,i}}{i!}F^{(i)}(1)
+\mathbb P(\xi_0<0)\sum_{i=1}^m\frac{\bar c_{m,i}}{i!}F^{(i)}(-1).
$$
This proves \eqref{eq-b-n}.
Finally, if $m$ is odd, then by the inductive hypothesis and the recursion \eqref{eq-c-n-intro}, a downward induction on $i$ shows that
$$
c_{m,i}=\bar c_{m,i}=0,\quad 1\le i\le m.
$$
This completes the inductive step. 
\qed

\noindent\textit{Proof of Theorem \ref{thm-rate-intro}.}
\textbf{Step 1: Preliminary estimates.}
It follows from \eqref{eq-explicit-X0t} and Assumption \ref{A2} that there exists
$C>0$ such that
\begin{equation}\label{rate-X0}
	|\bar X_0(t)-\operatorname{sgn}(\xi_0)|\le Ce^{-2t},
	\quad t\ge0, \text{ almost surely}.
\end{equation}
Indeed, by \eqref{eq-explicit-X0t},
$$
\bar X_0(t)
=\operatorname{sgn}(\xi_0)\Bigl(1+\frac{1-\xi_0^2}{\xi_0^2}e^{-2t}\Bigr)^{-1/2},
$$
and Assumption \ref{A2} implies that $|\xi_0|$ is essentially bounded above and below away from $0$.

For each $i\ge1$, by \eqref{eq-Lambda},
\begin{equation}\label{eq-Lambda-theta}
	\Lambda(t)^i
	=e^{2it}\theta_i(t),
	\quad
	\theta_i(t):=\bigl(\xi_0^2+(1-\xi_0^2)e^{-2t}\bigr)^{3i/2}.
\end{equation}
According to Assumption \ref{A2}, there exist deterministic constants $c(i),C(i)>0$ such that
\begin{equation}\label{bdd-theta}
	c(i)\le \theta_i(t)\le C(i),
	\quad t\ge0, \text{ almost surely}.
\end{equation}
Hence,
\begin{equation}\label{eq-Lambda-upper}
	\Lambda(t)^{-i}=e^{-2it}\theta_i(t)^{-1}\le c(i)^{-1}e^{-2it}.
\end{equation}
Moreover, since the function $x\mapsto x^{3i/2}$ is Lipschitz on a compact interval
containing $\xi_0^2+(1-\xi_0^2)e^{-2t}$ and $\xi_0^2$, we have
\begin{equation}\label{eq-theta-rate}
	|\theta_i(t)-|\xi_0|^{3i}|\le C(i)e^{-2t},
	\quad t\ge0, \text{ almost surely}.
\end{equation}
Next, we consider
$$
\left|\Lambda(t)^{-i}\int_0^t\Lambda(s)^i\dd s-\frac1{2i}\right|
\le \left|\theta_i(t)^{-1}e^{-2it}\int_0^t e^{2is}\bigl(\theta_i(s)-|\xi_0|^{3i}\bigr)\dd s\right|
+\left|\frac{|\xi_0|^{3i}}{\theta_i(t)}e^{-2it}\int_0^t e^{2is}\dd s-\frac1{2i}\right|.
$$
Using \eqref{eq-theta-rate}, \eqref{bdd-theta}, and
$$
e^{-2it}\int_0^t e^{2is}\dd s=\frac{1-e^{-2it}}{2i},
$$
we obtain
\begin{equation}\label{eq-Lambda-rate-1}
	\left|\Lambda(t)^{-i}\int_0^t\Lambda(s)^i\dd s-\frac1{2i}\right|
	\le C(i)e^{-2it}\int_0^t e^{2(i-1)s}\dd s
	+\frac1{2i}\left|\frac{|\xi_0|^{3i}}{\theta_i(t)}-1\right|
	+\frac{|\xi_0|^{3i}}{2i\theta_i(t)}e^{-2it}, \text{ almost surely}.
\end{equation}
The second and third terms in \eqref{eq-Lambda-rate-1} are bounded by $C(i)e^{-2t}$ and $C(i)e^{-2it}$ almost surely,
respectively. For the first term, if $i=1$, then
$$
e^{-2t}\int_0^t 1\dd s=te^{-2t}\le Ce^{-t},
$$
while if $i\ge2$, then
$$
e^{-2it}\int_0^t e^{2(i-1)s}\dd s\le C(i)e^{-2t}\le C(i)e^{-t}.
$$
Hence,
\begin{equation}\label{eq-rate-Lambda}
	\left|\Lambda(t)^{-i}\int_0^t\Lambda(s)^i\dd s-\frac1{2i}\right|
	\le C(i)e^{-t},
	\quad t\ge0, \text{ almost surely}.
\end{equation}
Finally, by \eqref{eq-Lambda-theta} and \eqref{bdd-theta},
\begin{equation}\label{eq-kernel}
	\Lambda(t)^{-i}\int_0^t\Lambda(s)^ie^{-s}\dd s
	\le C(i)e^{-2it}\int_0^t e^{(2i-1)s}\dd s
	\le C(i)e^{-t},
	\quad t\ge0, \text{ almost surely}.
\end{equation}

	\medskip\noindent\textbf{Step 2: Proof of \eqref{eq-cov-rate-S-weak}.}
	We prove \eqref{eq-cov-rate-S-weak} by induction on $m$, and for each fixed $m$, by downward induction on $i$.
	
	\medskip\noindent\textbf{Base case $m=1$:}
	Recall that $c_{1,1}=\bar c_{1,1}=0$ and
	$$
	\mathbb E[\bar X_1(t)\mid\xi_0]
	=\Lambda(t)^{-1}\mathbb E[\xi_1\mid\xi_0].
	$$
	According to Assumption \ref{A3}, it follows from \eqref{eq-Lambda-upper} that
	$$
	\left|\mathbb E[\bar X_1(t)\mid\xi_0]\right|
	\le Ce^{-2t},
	\quad t\ge0,  \text{ almost surely}.
	$$
	Hence
	$$
	\left|\mathbb E[\bar X_1(t)\mid\xi_0]-c_{1,1}\right|\le Ce^{-2t}\le Ce^{-t}
	\quad\text{ almost surely on }\{\xi_0>0\},
	$$
	and
	$$
	\left|\mathbb E[\bar X_1(t)\mid\xi_0]-\bar c_{1,1}\right|\le Ce^{-2t}\le Ce^{-t}
	\quad\text{ almost surely on }\{\xi_0<0\}.
	$$
	
	\medskip\noindent\textbf{Base case $m=2$ and $i=2$:}
	Recall that $c_{2,2}=\bar c_{2,2}=\frac12$, and
	$$
	\mathbb E[(\bar X_1(t))^2\mid\xi_0]
	=\Lambda(t)^{-2}\left(
	\mathbb E[\xi_1^2\mid\xi_0]+2\int_0^t\Lambda(s)^2\dd s
	\right).
	$$
	Using Assumption \ref{A3}, \eqref{eq-Lambda-upper} and \eqref{eq-rate-Lambda},  we obtain
	$$
	\left|
	\mathbb E[(\bar X_1(t))^2\mid\xi_0]-\frac12
	\right|
	\le
	\Lambda(t)^{-2}\left|\mathbb E[\xi_1^2\mid\xi_0]\right|
	+
	2\left|
	\Lambda(t)^{-2}\int_0^t\Lambda(s)^2\dd s-\frac14
	\right|
	\le Ce^{-t}, \text{ almost surely}.
	$$
	Therefore,
	$$
	\left|
	\mathbb E[(\bar X_1(t))^2\mid\xi_0]-c_{2,2}
	\right|
	\le Ce^{-t}
	\quad\text{ almost surely on }\{\xi_0>0\},
	$$
	and
	$$
	\left|
	\mathbb E[(\bar X_1(t))^2\mid\xi_0]-\bar c_{2,2}
	\right|
	\le Ce^{-t}
	\quad\text{ almost surely on }\{\xi_0<0\}.
	$$
	
	\medskip\noindent\textbf{Inductive step:}
Let $m \ge 2$ and $i \in \{1, \dots, m\}$. Assume that \eqref{eq-cov-rate-S-weak} hold for indices $(m^\prime, i^\prime)$ such that $m^\prime < m$ or $m^\prime = m$ with $i^\prime > i$. We now establish the result for $(m,i)$. We only prove the estimate on the event $\{\xi_0>0\}$, since the case $\{\xi_0<0\}$ is similar.
	
On	 $\{\xi_0>0\}$, by \eqref{eq-Smi-int},
$$
 \begin{aligned} &\quad \mathbb E[S_{m,i}(t)\mid\xi_0]-c_{m,i}\\ &=\Lambda(t)^{-i}\mathbb E[S_{m,i}(0)\mid\xi_0]+\Lambda(t)^{-i}\int_0^t \Lambda(s)^i\Bigl[i(i-1)\mathbb E[S_{m-2,i-2}(s)\mid\xi_0]\\ & \,\,-i\mathbb E\bigl(3\bar X_0(s)S_{m,i+1}(s)+S_{m,i+2}(s)\mid\xi_0\bigr) -2ic_{m,i}\Bigr]\dd s+2ic_{m,i}\left(\Lambda(t)^{-i}\int_0^t\Lambda(s)^i\dd s-\frac1{2i}\right). 
 \end{aligned}
  $$
  For the initial term, according to \eqref{eq-Lambda-upper} and Assumption \ref{A3}, we have
  $$
  \left|\Lambda(t)^{-i}\mathbb E[S_{m,i}(0)\mid\xi_0]\right|
  \le Ce^{-2it}\le Ce^{-t},\text{ almost surely}.
  $$
Moreover, the induction hypothesis gives
	$$
	\left|\mathbb E[S_{m-2,i-2}(s)\mid\xi_0]-c_{m-2,i-2}\right|
	+\left|\mathbb E[S_{m,i+1}(s)\mid\xi_0]-c_{m,i+1}\right|
	+\left|\mathbb E[S_{m,i+2}(s)\mid\xi_0]-c_{m,i+2}\right|
	\le Ce^{-s},
	$$
	almost surely. Hence,
	$$
	\left|\mathbb E[S_{m,i+1}(s)\mid\xi_0]\right|
	\le |c_{m,i+1}|+Ce^{-s}\le C,
	\quad\text{ almost surely on }\{\xi_0>0\}.
	$$
	Together with \eqref{rate-X0},
$$
\begin{aligned}
	&\Bigl|i(i-1)\mathbb E[S_{m-2,i-2}(s)\mid\xi_0]
	-i\mathbb E\bigl(3\bar X_0(s)S_{m,i+1}(s)+S_{m,i+2}(s)\mid\xi_0\bigr)
	-2ic_{m,i}\Bigr|\\
	&\le i(i-1)\left|\mathbb E[S_{m-2,i-2}(s)\mid\xi_0]-c_{m-2,i-2}\right|+3i\left|\mathbb E[S_{m,i+1}(s)\mid\xi_0]-c_{m,i+1}\right|\\&
	\quad	+i\left|\mathbb E[S_{m,i+2}(s)\mid\xi_0]-c_{m,i+2}\right|
	+3i\mathbb |\bar X_0(s)-1|\left|E\bigl[S_{m,i+1}(s)\mid\xi_0\bigr]\right|\\
	&\le Ce^{-s}, \text{ almost surely}.
\end{aligned}
$$
Therefore, applying \eqref{eq-kernel}, almost surely,
$$
\left|
\Lambda(t)^{-i}\int_0^t
\Lambda(s)^i\Bigl[i(i-1)\mathbb E[S_{m-2,i-2}(s)\mid\xi_0]
-i\mathbb E\bigl(3\bar X_0(s)S_{m,i+1}(s)+S_{m,i+2}(s)\mid\xi_0\bigr)
-2ic_{m,i}\Bigr]\dd s
\right|
\le Ce^{-t}.
$$
Moreover, by \eqref{eq-rate-Lambda},
$$
\left|2ic_{m,i}\left(\Lambda(t)^{-i}\int_0^t\Lambda(s)^i\dd s-\frac1{2i}\right)\right|
\le Ce^{-t}, \text{ almost surely}.
$$
Combining these three terms, we obtain
$$
\left|\mathbb E[S_{m,i}(t)\mid\xi_0]-c_{m,i}\right|\le Ce^{-t},
\quad t\ge0,\text{ almost surely}.
$$
This proves \eqref{eq-cov-rate-S-weak}.

 	\medskip\noindent\textbf{Step 3: Proof of \eqref{eq-cov-rate-weak}.}
For $m=0$, by \eqref{eq-lim-a0},
$$
\begin{aligned}
	|a_0(t,F)-b_0(F)|
	&\le \mathbb P(\xi_0>0)
	\mathbb E\bigl[|F(\bar X_0(t))-F(1)|\mid\xi_0>0\bigr]\\
	&\quad+\mathbb P(\xi_0<0)
	\mathbb E\bigl[|F(\bar X_0(t))-F(-1)|\mid\xi_0<0\bigr].
\end{aligned}
$$
Since \eqref{eq-I} holds and $F^\prime$ is bounded on $I$, we have
$$
|F(\bar X_0(t))-F(\operatorname{sgn}(\xi_0))|
\le C(F)|\bar X_0(t)-\operatorname{sgn}(\xi_0)|.
$$
Thus, by \eqref{rate-X0},
$$
|a_0(t,F)-b_0(F)|\le C(F)e^{-2t}\le C(F)e^{-t},
\quad t\ge0.
$$
For $m\in\{1,\dots,n\}$, by \eqref{eq-an-intro} and \eqref{eq-b-n},
$$
\begin{aligned}
	a_m(t,F)-b_m(F)
	&=\mathbb P(\xi_0>0)\sum_{i=1}^m\frac1{i!}
	\Bigl(
	\mathbb E[F^{(i)}(\bar X_0(t))S_{m,i}(t)\mid\xi_0>0]
	-F^{(i)}(1)c_{m,i}
	\Bigr)\\
	&\quad+\mathbb P(\xi_0<0)\sum_{i=1}^m\frac1{i!}
	\Bigl(
	\mathbb E[F^{(i)}(\bar X_0(t))S_{m,i}(t)\mid\xi_0<0]
	-F^{(i)}(-1)\bar c_{m,i}
	\Bigr).
\end{aligned}
$$
Since $F^{(i)}(\bar X_0(t))$ is $\sigma(\xi_0)$-measurable, we have
$$
\mathbb E[F^{(i)}(\bar X_0(t))S_{m,i}(t)\mid\xi_0>0]
=
\mathbb E\Bigl[F^{(i)}(\bar X_0(t))\mathbb E[S_{m,i}(t)\mid\xi_0]\mid\xi_0>0\Bigr].
$$
Therefore,
$$
\begin{aligned}
	&\left|
	\mathbb E[F^{(i)}(\bar X_0(t))S_{m,i}(t)\mid\xi_0>0]
	-F^{(i)}(1)c_{m,i}
	\right|\\
	&\le
	\mathbb E\Bigl[\left|F^{(i)}(\bar X_0(t))-F^{(i)}(1)\right|
	\left|\mathbb E[S_{m,i}(t)\mid\xi_0]\right|\mid\xi_0>0\Bigr]\\
	&\quad+|F^{(i)}(1)|
	\mathbb E\Bigl[\left|\mathbb E[S_{m,i}(t)\mid\xi_0]-c_{m,i}\right|\mid\xi_0>0\Bigr].
\end{aligned}
$$
Since $F^{(i+1)}$ is bounded on $I$, by \eqref{rate-X0} and the induction hypothesis,
$$
	\mathbb E\Bigl[\left|F^{(i)}(\bar X_0(t))-F^{(i)}(1)\right|
\left|\mathbb E[S_{m,i}(t)\mid\xi_0]\right|\mid\xi_0>0\Bigr]
\le C(F)e^{-2t}\le C(F)e^{-t}.
$$
Together with \eqref{eq-cov-rate-S-weak}, this gives
$$
\left|
\mathbb E[F^{(i)}(\bar X_0(t))S_{m,i}(t)\mid\xi_0>0]
-F^{(i)}(1)c_{m,i}
\right|
\le C(F)e^{-t}.
$$
Similarly,
$$
\left|
\mathbb E[F^{(i)}(\bar X_0(t))S_{m,i}(t)\mid\xi_0<0]
-F^{(i)}(-1)\bar c_{m,i}
\right|
\le C(F)e^{-t}.
$$
Summing over $i=1,\cdots,m$, we conclude that
$$
|a_m(t,F)-b_m(F)|\le C(m,F)e^{-t},
\quad t\ge0.
$$
\qed

\subsection{Vector-valued case}

In this subsection, we consider \eqref{SDE-0} with $d\ge 2$. Unlike in the one-dimensional setting, where each coefficient $a_m(t,F)$ converges to $b_m(F)$ as $t\to\infty$, we show that in higher dimensions, the second-order coefficient $a_2(t,F)$ may fail to admit a finite long-time limit.

\noindent\textit{Proof of Theorem \ref{thm-vector-case}.} 
	We choose
	$$
	\xi_\varepsilon\equiv e_1,\quad \varepsilon\in(0,1),
$$
	where $e_1=(1,0,\dots,0)\in\mathbb R^d$. Then
	$$
	\xi_0=e_1,\quad \xi_k=0,\quad\text{for all }k\ge1,
$$
	so that Assumption \ref{A1} is trivially satisfied. Since $|e_1|=1$, Assumption \ref{A2} also holds.
	Moreover, since $e_1$ is an equilibrium point of the deterministic equation \eqref{eq-barX0-intro},
	we have
$$
	\bar X_0(t)\equiv e_1,\quad t\ge0.
$$
	We decompose $\bar X_1$, $\bar X_2$ as
	$$
	\bar X_1(t)=r_1(t)e_1+v_1(t),\quad
	\bar X_2(t)=r_2(t)e_1+v_2(t),
	$$
where $r_k=e_1\cdot\bar X_k$ and $v_k=\bar X_k-r_k e_1$ for $k=1,2$.
	
	Since $\bar X_0(t)\equiv e_1$, the equation for $\bar X_1$ reduces to
	$$
\dd r_1=-2r_1\dd t+\sqrt2e_1\cdot \dd W_t,\quad r_1(0)=0,
$$
	and
$$
\dd v_1=\sqrt2(I-e_1\otimes e_1)\dd W_t,\quad v_1(0)=0.
$$
	Hence, $r_1$ is a one-dimensional Ornstein-Uhlenbeck process, while $v_1$ is a Brownian motion on the $(d-1)$-dimensional subspace $e_1^\perp:=\operatorname{span}\{e_2,\cdots,e_{d}\}$. In particular,
	\begin{equation}\label{eq:Er1}
	\mathbb E[r_1(t)^2]
	=e^{-4t}\int_0^t 2e^{4s}\dd s
	=\frac{1-e^{-4t}}{2}\to \frac12
	\quad\text{as }t\to\infty,
	\end{equation}
and
	\begin{equation}\label{eq:Ev1}
	\mathbb E|v_1(t)|^2=2(d-1)t.
	\end{equation}
	Next, the equation for $\bar X_2$ yields
$$
	\dd r_2
	=-2r_2\dd t-3r_1^2\dd t-|v_1|^2 \dd t,
	\quad r_2(0)=0.
$$
	Therefore,
\begin{equation}\label{eq:Er2}
	\mathbb E[r_2(t)]
=	-e^{-2t}\int_0^t e^{2s}\Bigl(3\mathbb E[r_1(s)^2]+\mathbb E|v_1(s)|^2\Bigr)\dd s.
\end{equation}
Substituting \eqref{eq:Er1} and  \eqref{eq:Ev1} into \eqref{eq:Er2}, we conclude that
$$
	\mathbb E[r_2(t)]\asymp -(d-1)t,
	\quad t\to\infty.
$$
	Finally, let
$$
	F(x)=x_1=e_1\cdot x.
$$
	Then $F\in C^{n+1}(\mathbb R^d)$ and satisfies \eqref{assump-F}. Moreover,
$$
	DF(x)=e_1,\quad D^2F(x)=0.
$$
	Therefore,
$$
	a_2(t,F)	=
	\mathbb E\bigl[DF(\bar X_0(t))\bar X_2(t)\bigr]
	+\frac12\mathbb E\bigl[D^2F(\bar X_0(t))(\bar X_1(t),\bar X_1(t))\bigr]
	=	\mathbb E[e_1\cdot \bar X_2(t)]
	=	\mathbb E[r_2(t)].
$$
	It follows that
$$
	a_2(t,F)\asymp -(d-1)t
	\quad\text{as }t\to\infty,
$$
	and,  in particular, $a_2(t,F)$ does not admit a finite limit.
\qed

\section{Connection with expansions in equilibrium}\label{sec-application}

In the previous section, we showed that the long-time behavior of the dynamical expansion depends strongly on the dimension. When $d\ge2$, the second-order coefficient $a_2(t,F)$ may fail to admit a finite limit as $t\to\infty$. We now show that, in the scalar-valued case $d=1$, the long-time limits of the dynamical expansion coefficients coincide with the coefficients in the small-noise expansion of the invariant measure.

Let $\mu_\varepsilon$ denote the invariant measure of \eqref{SDE-0}, namely
$$
\mu_\varepsilon(\mathrm{d}x)
=Z_\varepsilon^{-1}\exp\left(-\frac{V(x)}{\varepsilon}\right)\dd x,
$$
where $Z_\varepsilon$ is the normalizing constant. In the scalar-valued case $d=1$, Theorem \ref{thm-longtime-intro} shows that, under Assumption \ref{A1} and Assumption \ref{A2}, the coefficients $a_m(t,F)$ in \eqref{eq-weak-exp-intro} satisfy
$$
\lim_{t\to\infty}a_m(t,F)=b_m(F),\quad 0\le m\le n.
$$
On the other hand, the invariant measure admits a small-noise expansion of the form
\begin{equation*}
	\int_{\mathbb R}F(x)\mu_\varepsilon(\mathrm{d} x)
	=\sum_{m=0}^n \varepsilon^{m/2}B_m(F)+o(\varepsilon^{n/2}),
	\qquad \varepsilon\to0.
\end{equation*}
The main purpose of this section is to show that these two families of coefficients coincide, namely,
$$
b_m(F)=B_m(F),\quad 0\le m\le n.
$$

We begin by characterizing the coefficients in the invariant measure expansion.

\begin{proposition}\label{prop-invariant-exp}
	Let $n\in\mathbb N_+$. {Then for every $F\in C^{n+1}(\mathbb R)$ satisfying \eqref{assump-F}, there exists $\varepsilon_0\in (0,1]$ and a sequence of coefficients $(B_m(F))_{0\le m\le n}$ such that, for every $\varepsilon\in (0,\varepsilon_0)$,
	\begin{equation}\label{eq:inv-expansion}
		\int_{\mathbb R}F(x)\mu_\varepsilon(\mathrm d x)
		=\sum_{m=0}^n \varepsilon^{m/2}B_m(F)+R_{n}(F,\varepsilon),
	\end{equation}
	where
	$$
	|R_{n}(F,\varepsilon)|\le C_n(F)\varepsilon^{(n+1)/2}.
	$$
	Moreover, for each $m\in\{0,1,\dots,n\}$,
		\begin{equation}\label{eq:Bm-structure}
			B_m(F)
			=\frac12\sum_{i=0}^m \frac{d_{m,i}}{i!}F^{(i)}(1)
			+\frac12\sum_{i=0}^m \frac{\bar d_{m,i}}{i!}F^{(i)}(-1).
		\end{equation}
	Here the coefficients $d_{m,i}$ and $\bar d_{m,i}$ are uniquely determined by}
		\begin{equation}\label{eq:rec-d}
			d_{m,i}
			=\frac{i-1}{2}d_{m-2,i-2}-\frac32 d_{m,i+1}-\frac12 d_{m,i+2},
			\quad
			\bar d_{m,i}
			=\frac{i-1}{2}\bar d_{m-2,i-2}+\frac32 \bar d_{m,i+1}-\frac12 \bar d_{m,i+2},
		\end{equation}
		for $m\in\{1,\dots,n\}$ and $i\in\{1,\dots,m\}$, with
$$
d_{0,0}=\bar d_{0,0}=1,\quad d_{m,0}=\bar d_{m,0}=0,\quad m\ge1,
$$
and the convention that
$$
d_{k,j}=\bar d_{k,j}=0
\quad\text{whenever }k<0,\text{ or } k\geq1 \text{ and }j\notin\{0,1,\dots,k\}.
$$
In particular, the recursion gives
$$
d_{1,1}=\bar d_{1,1}=0,\quad d_{2,2}=\bar d_{2,2}=\frac12.
$$
\end{proposition}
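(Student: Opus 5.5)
Our plan is to apply Laplace's method near the two minima $\pm1$ and then obtain the recursion for the local coefficients by testing the stationary Fokker--Planck equation against polynomial test functions.

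\textbf{Step 1: localization.} Split $\mathbb R=(-\infty,0]\cup(0,\infty)$. By the symmetry $V(x)=V(-x)$ each half carries mass exactly $\frac12$. Fix $\delta\in(0,\frac12)$. Since $V-V(\pm1)\ge c\delta^2$ outside $(\pm1-\delta,\pm1+\delta)$ and $F$ grows polynomially, the contribution of the complement of these windows is $O(e^{-c/\varepsilon})$. It is therefore negligible at every polynomial order in $\varepsilon^{1/2}$, and this is where $\varepsilon_0$ enters.

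\textbf{Step 2: local expansion near $+1$.} Set $x=1+\varepsilon^{1/2}y$. A direct computation gives
\[
(V(x)-V(1))/\varepsilon=y^2+\varepsilon^{1/2}y^3+\tfrac{\varepsilon}{4}y^4 .
\]
Taylor-expand $F(1+\varepsilon^{1/2}y)$ to order $n$, with the Lagrange remainder bounded by polynomial growth. Also expand $\exp(-\varepsilon^{1/2}y^3-\frac\varepsilon4y^4)$ to order $n$ with a remainder that is uniform on $|y|\le\delta\varepsilon^{-1/2}$; this is legitimate because $y^2+\varepsilon^{1/2}y^3+\frac\varepsilon4y^4\ge cy^2$ there. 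Dividing by the normalizer, expanded in the same way, gives
\[
\int F\,\dd\mu_\varepsilon\big|_{x>0}=\tfrac12\sum_{m\le n}\varepsilon^{m/2}\sum_{i\le m}\tfrac{d_{m,i}}{i!}F^{(i)}(1)+O(\varepsilon^{(n+1)/2}).
\]
Here $d_{m,i}$ is the coefficient of $\varepsilon^{m/2}$ in the normalized local moment $\mathbb E_{\mu_\varepsilon(\cdot\mid x>0)}[(x-1)^i]\,\varepsilon^{-i/2}$ times $\varepsilon^{i/2}$, i.e. $\mathbb E[(x-1)^i\mid x>0]=\sum_{m}\varepsilon^{m/2}d_{m,i}$. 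Hence $d_{m,i}=0$ for $i>m$, $d_{m,0}=\delta_{m0}$, and the structure \eqref{eq:Bm-structure} follows. The well at $-1$ is handled by $x\mapsto -x$, which flips the sign of the cubic term. The remainder bound $C_n(F)\varepsilon^{(n+1)/2}$ uses the $C^{n+1}$ Taylor remainder together with the Gaussian moments.

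\textbf{Step 3: the recursion.} This is the main obstacle: we must identify $d_{m,i}$ with the recursion \eqref{eq:rec-d}. The stationary equation $\varepsilon\rho''+(V'\rho)'=0$, restricted to $(0,\infty)$, reads $\int(\varepsilon G''-V'G')\rho=0$ for test functions $G$; boundary terms at $0$ are exponentially small after smooth cutoff. Take $G(x)=(x-1)^{i+1}/(i+1)$ and write
\[
V'(x)=x^3-x=2(x-1)+3(x-1)^2+(x-1)^3 .
\]
This yields the exact moment identity
\[
\varepsilon\, i\,\mathbb E[(x-1)^{i-1}]=2\mathbb E[(x-1)^{i}]+3\mathbb E[(x-1)^{i+1}]+\mathbb E[(x-1)^{i+2}] ,
\]
with all expectations conditional on $x>0$, up to exponentially small errors. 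Inserting the expansions $\mathbb E[(x-1)^j]=\sum_k\varepsilon^{k/2}d_{k,j}$ and matching the coefficient of $\varepsilon^{m/2}$ gives
\[
2d_{m,i}=i\,d_{m-2,i-1}-3d_{m,i+1}-d_{m,i+2} .
\]
The index in this relation is $i-1$, whereas \eqref{eq:rec-d} uses $d_{m-2,i-2}$ with factor $\frac{i-1}{2}$. We must therefore be careful with the shift. We would instead test with $G=(x-1)^{i}/i$, which gives
\[
\varepsilon(i-1)\mathbb E[(x-1)^{i-2}]=2\mathbb E[(x-1)^i]+3\mathbb E[(x-1)^{i+1}]+\mathbb E[(x-1)^{i+2}] ,
\]
and this reproduces \eqref{eq:rec-d} exactly. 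For $\bar d$, the expansion $V'(x)=2(x+1)-3(x+1)^2+(x+1)^3$ produces the sign flip in the middle term.

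\textbf{Step 4: uniqueness and initial values.} Uniqueness holds because the nested induction (increasing $m$, decreasing $i$ from $m$) determines every value. The initial values $d_{1,1}=0$ and $d_{2,2}=\frac12$ follow from the recursion with the stated conventions.
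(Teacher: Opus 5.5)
Your proof is correct. Its two ingredients are the paper's: Laplace asymptotics at $\pm1$, then the invariance identity $\int(\varepsilon G''-V'G')\,\mathrm{d}\mu_\varepsilon=0$ with order-by-order matching. You organize them differently, though.

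\textbf{The paper's route.} The paper localizes with smooth cutoffs $\chi_\pm$ and writes $F=F_++F_-+H$, with $F_\pm$ the cut-off Taylor polynomials at $\pm1$. It quotes Olver's theorem for $\int\chi_+(x-1)^i\,\mathrm{d}\mu_\varepsilon$ and bounds $H$ region by region. For the recursion it tests with an arbitrary $F\in C_c^\infty$, expands $(x-x^3)F'$ and $F''$ through \eqref{eq:Bm-structure}, and extracts the coefficient of $F^{(i)}(1)$ by Leibniz's rule. This relies on the jets $F^{(i)}(\pm1)$ being freely prescribable. The values $d_{m,0}=\bar d_{m,0}=\delta_{m0}$ then need a separate argument (symmetry plus $F\equiv1$).

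\textbf{Your route.} You cut at the barrier $x=0$. There, evenness of $V$ gives each half-line mass exactly $\frac12$, and the boundary term $-\varepsilon G'(0)\rho_\varepsilon(0)$ is $O(\varepsilon^{1/2}e^{-1/(4\varepsilon)})$. You identify $d_{m,i}$ as the asymptotic coefficients of the conditional moments $\mathbb E[(x-1)^i\mid x>0]$. You then read the recursion off the single identity $V'(x)(x-1)^{i-1}=2(x-1)^i+3(x-1)^{i+1}+(x-1)^{i+2}$.

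\textbf{What each buys.} Your route makes $d_{m,0}=\delta_{m0}$, $d_{m,i}=0$ for $i>m$, and $\bar d_{m,i}=(-1)^i d_{m,i}$ automatic. It removes the Leibniz/jet bookkeeping and the black-box citation. It is also the stationary counterpart of taking conditional expectations in the equation for $S_{m,i}$ in Section~\ref{sec-long-time}. The paper's formulation has the merit of producing the recursion directly in the $F^{(i)}(\pm1)$-indexed form in which $b_m(F)$ is stated.

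\textbf{One slip to remove.} Your first moment identity is miscomputed. For $G=(x-1)^{i+1}/(i+1)$ one has $V'G'=V'(x)(x-1)^i$. The right-hand side should therefore be $2\mathbb E[(x-1)^{i+1}]+3\mathbb E[(x-1)^{i+2}]+\mathbb E[(x-1)^{i+3}]$. After $i\mapsto i-1$ this is exactly the identity you then obtain from $G=(x-1)^i/i$. So there is no index shift to be careful about. The relation $2d_{m,i}=i\,d_{m-2,i-1}-3d_{m,i+1}-d_{m,i+2}$ is simply false: it gives $d_{2,1}=-\frac14$ instead of the correct $-\frac34$. It should be deleted.
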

\begin{proof}
We divide the proof into 3 steps.

\noindent\textbf{Step 1: Localization near the minima and construction of the expansion.}
	Since
	$V$
	has exactly two non-degenerate minima at $\pm1$, with
	$
	V(1)=V(-1)=-\frac14,
	$
	and $V(x)\to+\infty$ as $|x|\to\infty$, we may choose $0<\delta<1$ and constants
	$c_1,C_1,c_2>0$ depending on $\delta$ such that
	\begin{align}
		V(1)+c_1(x-1)^2 \le V(x) \le V(1)+C_1(x-1)^2,
		&\quad\text{for }|x-1|\le\delta,\label{eq:V-local-1}\\
		V(-1)+c_1(x+1)^2 \le V(x) \le V(-1)+C_1(x+1)^2,
		&\quad\text{for }|x+1|\le\delta,\label{eq:V-local--1}\\
		V(x)\ge V(1)+c_2,
		&\quad\text{for }|x-1|\ge\delta,\ |x+1|\ge\delta.\label{eq:V-away}
	\end{align}	
	Let $\chi_+,\chi_-\in C_c^\infty(\mathbb R)$ satisfy
	$$
	\chi_+(x)=1 \ \text{for }|x-1|\le\delta/2,\quad
	\chi_+(x)=0 \ \text{for }|x-1|\ge\delta,
	$$
	 and $\chi_-(x):=\chi_+(-x)$.
    Classical one-dimensional Laplace asymptotics (see, for example, \cite[Section 3.7]{Olv97}) yield
	\begin{equation}\label{eq:Z-bounds}
		c\sqrt{\varepsilon}\mathrm e^{-V(1)/\varepsilon}
		\le Z_\varepsilon\le
		C\sqrt{\varepsilon}\mathrm e^{-V(1)/\varepsilon},
		\qquad 0<\varepsilon<\varepsilon_0,
	\end{equation}
	for some constants $0<c\le C<\infty$ and $\varepsilon_0>0$.
	For $F\in C^{n+1}(\mathbb R)$ satisfying \eqref{assump-F}, define
	$$
	F_+(x):=\sum_{i=0}^n \frac{F^{(i)}(1)}{i!}\chi_+(x)(x-1)^i,
	\quad
	F_-(x):=\sum_{i=0}^n \frac{F^{(i)}(-1)}{i!}\chi_-(x)(x+1)^i,
	$$
	and set
	$
	H:=F-F_+-F_-.
	$
	Then
	\begin{equation}\label{eq-zero-H}
		H^{(i)}(1)=H^{(i)}(-1)=0,\quad i=0,\dots,n.
	\end{equation}
	For each $i\in\{0,\dots,n\}$, since the function $\chi_+(x)(x-1)^i\in C_c^\infty(\mathbb R)$ and its support is contained in a neighborhood of the non-degenerate minimum $x=1$, in which $x=1$ is the unique critical point, we can apply   \cite[Chapter 3, Theorem 8.1]{Olv97} with $\mu=2, \lambda=i+1$ to obtain an asymptotic expansion for $\int_{\mathbb R}\chi_+(x)(x-1)^i
		\mathrm e^{-V(x)/\varepsilon}\dd x$.
Combining this with the corresponding asymptotics for the partition function $Z_\varepsilon$, there exists $d_{m}(i)\in \mathbb{R}$ such that 
	\begin{equation*}
	\int_{\mathbb R}\chi_+(x)(x-1)^i
	\mu_\varepsilon(\mathrm{d}x)=	\frac{1}{Z_\varepsilon}\int_{\mathbb R}\chi_+(x)(x-1)^i
		\mathrm e^{-V(x)/\varepsilon}\dd x
		=\frac12\sum_{m=i}^n \varepsilon^{m/2}d_{m}(i)+r_{n,i}^+(\varepsilon),
	\end{equation*}
	where
	$$
	|r_{n,i}^+(\varepsilon)|\le C\varepsilon^{(n+1)/2},\quad \varepsilon\in (0,\varepsilon_0).
	$$
	Therefore,
	\begin{align}
		\int_{\mathbb R}F_+(x)\mu_\varepsilon(\mathrm{d}x)
		&=\sum_{i=0}^n\frac{F^{(i)}(1)}{i!}
		\int_{\mathbb R}\chi_+(x)(x-1)^i
	\mu_\varepsilon(\mathrm{d}x)\notag\\
		&=\frac12\sum_{m=0}^n \varepsilon^{m/2}
		\sum_{i=0}^m\frac{d_{m}(i)}{i!}F^{(i)}(1)+R_{n,+}(F,\varepsilon),\label{eq-estimate+}
	\end{align}
with
	$$
	|R_{n,+}(F,\varepsilon)|\le C_n(F)\varepsilon^{(n+1)/2},\quad \varepsilon\in (0,\varepsilon_0).
	$$
	Similarly, let $\bar d_{m}(i)$ denote the corresponding coefficients in the expansion of
	$$
	\int_{\mathbb R}\chi_-(x)(x+1)^i
\mu_\varepsilon(\mathrm{d}x).
	$$
	Then
	\begin{equation}\label{eq-estimate-}
		\int_{\mathbb R}F_-(x)\mu_\varepsilon(\mathrm{d}x)
		=\frac12\sum_{m=0}^n \varepsilon^{m/2}
		\sum_{i=0}^m\frac{\bar d_{m}(i)}{i!}F^{(i)}(-1)+R_{n,-}(F,\varepsilon),
	\end{equation}
	with
	$$
	|R_{n,-}(F,\varepsilon)|\le C_n(F)\varepsilon^{(n+1)/2},\quad \varepsilon\in (0,\varepsilon_0).
	$$
	
	\noindent\textbf{Step 2: Estimate of the remainder term.} To prove (i), it remains to estimate the contribution of $H$. Write
	$$
	\int_{\mathbb R}H(x)\mu_\varepsilon(\mathrm d x)
	=I_1(\varepsilon)+I_{-1}(\varepsilon)+I_{\mathrm{out}}(\varepsilon),
	$$
	where
	$$
	I_1(\varepsilon):=\int_{|x-1|\le\delta}H(x)\mu_\varepsilon(\mathrm{d}x),
	\quad
	I_{-1}(\varepsilon):=\int_{|x+1|\le\delta}H(x)\mu_\varepsilon(\mathrm{d}x),
	$$
	and
	$$
	I_{\mathrm{out}}(\varepsilon)
	:=\int_D H(x)\mu_\varepsilon(\mathrm{d}x),
	\quad
	D:=\{x\in\mathbb R:\ |x-1|>\delta,\ |x+1|>\delta\}.
	$$
 By \eqref{eq-zero-H} and Taylor's theorem, we have
	$$
	H(x)=(x-1)^{n+1}h_1(x)\quad\text{for }|x-1|\le\delta,
	\quad
	H(x)=(x+1)^{n+1}h_{-1}(x)\quad\text{for }|x+1|\le\delta,
	$$
	 where $h_1(x)=\frac{1}{n!}\int_0^1 (1-t)^n
H^{(n+1)}(1+t(x-1))\dd t$, and $h_{-1}(x)=\frac{1}{n!}\int_0^1 (1-t)^n
H^{(n+1)}(-1+t(x+1))\dd t$.
	
	Let $M_1:=\sup_{|x-1|\le\delta}|h_1(x)|<\infty$. Using \eqref{eq:Z-bounds} and \eqref{eq:V-local-1},
	\begin{align*}
		|I_1(\varepsilon)|
		&\le Z_\varepsilon^{-1}\int_{|x-1|\le\delta}|H(x)|\mathrm e^{-V(x)/\varepsilon}\dd x\\
		&\le CZ_\varepsilon^{-1}\int_{|x-1|\le\delta}|x-1|^{n+1}\mathrm e^{-V(x)/\varepsilon}\dd x\\
		&\le C\varepsilon^{-1/2}\int_{|x-1|\le\delta}|x-1|^{n+1}
		\mathrm e^{-c_1(x-1)^2/\varepsilon}\dd x.
	\end{align*}
	With the change of variables $y=(x-1)/\sqrt{\varepsilon}$, we obtain
	\begin{align*}
		|I_1(\varepsilon)|
		&\le C\varepsilon^{-1/2}
		\int_{|y|\le\delta/\sqrt{\varepsilon}}
		(\sqrt{\varepsilon}|y|)^{n+1}\mathrm e^{-c_1y^2}\sqrt{\varepsilon}\dd y\\
		&=C\varepsilon^{(n+1)/2}
		\int_{|y|\le\delta/\sqrt{\varepsilon}}|y|^{n+1}\mathrm e^{-c_1y^2}\dd y
		\le C_n(F)\varepsilon^{(n+1)/2}.
	\end{align*}
	The same argument near $x=-1$ gives
	$$
	|I_{-1}(\varepsilon)|\le C_n(F)\varepsilon^{(n+1)/2}.
	$$
	
We next estimate $I_{\mathrm{out}}(\varepsilon)$. Since $F$ satisfies \eqref{assump-F}, and $F_+,F_-$ are compactly supported, there exist constants $C,q^\prime>0$ such that
\begin{equation}\label{growth-H}
    |H(x)|\le C(1+|x|^{q^\prime}),\quad x\in\mathbb R.
\end{equation}
 We choose $R>0$ sufficiently large such that
\begin{equation}\label{eq-R}
    V(x)\ge V(1)+c_2+x^2,\quad |x|\ge R,
\end{equation}
and recall that $c_2$ comes from \eqref{eq:V-away}.
In the following, we decompose $D$ as
$$
D=(D\cap[-R,R])\cup(D\cap\{|x|>R\}),
$$ and estimate the two parts respectively.
On the compact set $D\cap[-R,R]$, by \eqref{eq:V-away} and \eqref{growth-H} ,
$$
\int_{D\cap[-R,R]} |H(x)|\mathrm e^{-V(x)/\varepsilon}\dd x
\le C(R)\mathrm e^{-(V(1)+c_2)/\varepsilon}.
$$
Therefore, using the lower bound in \eqref{eq:Z-bounds},
\begin{equation}\label{eq-HD1}
Z_\varepsilon^{-1}\int_{D\cap[-R,R]} |H(x)|\mathrm e^{-V(x)/\varepsilon}\dd x
\le C(R)\varepsilon^{-1/2}\mathrm e^{-c_2/\varepsilon}.
\end{equation}
On the set $D\cap\{|x|>R\}$, it follows from \eqref{eq-R} that
\begin{align*}
	\int_{D\cap\{|x|>R\}} |H(x)|\mathrm e^{-V(x)/\varepsilon}\dd x
	&\le C\int_{|x|>R}(1+|x|^{q^\prime})\mathrm e^{-(V(1)+c_2+x^2)/\varepsilon}\dd x\\
	&\le C\mathrm e^{-(V(1)+c_2)/\varepsilon}
	\int_{\mathbb R}(1+|x|^{q^\prime})\mathrm e^{-x^2/\varepsilon}\dd x.
\end{align*}
With the change of variables $x=\sqrt\varepsilon\,y$, we get
$$
\int_{\mathbb R}(1+|x|^{q^\prime})\mathrm e^{-x^2/\varepsilon}\dd x
=\sqrt\varepsilon\int_{\mathbb R}(1+\varepsilon^{q/2}|y|^{q^\prime})\mathrm e^{-y^2}\dd y
\le C(q^\prime)\sqrt\varepsilon.
$$
 Thus,
$$
\int_{D\cap\{|x|>R\}} |H(x)|\mathrm e^{-V(x)/\varepsilon}\dd x
\le C\sqrt\varepsilon\,\mathrm e^{-(V(1)+c_2)/\varepsilon}.
$$
Using again \eqref{eq:Z-bounds}, we obtain
\begin{equation}\label{eq-HD2}
Z_\varepsilon^{-1}\int_{D\cap\{|x|>R\}} |H(x)|\mathrm e^{-V(x)/\varepsilon}\dd x
\le C\,\mathrm e^{-c_2/\varepsilon}.
\end{equation}
Combining the two parts \eqref{eq-HD1} and \eqref{eq-HD2}, we obtain that
$$
|I_{\mathrm{out}}(\varepsilon)|
\le C\varepsilon^{-1/2}\mathrm e^{-c_2/\varepsilon}
\le C_n(F)\varepsilon^{(n+1)/2},
\quad 0<\varepsilon<\varepsilon_0.
$$
	Therefore,
	\begin{equation}\label{eq-estimate-H}
		\left|\int_{\mathbb R}H(x)\mu_\varepsilon(\mathrm{d}x)\right|
		\le C_n(F)\varepsilon^{(n+1)/2}.
	\end{equation}
	Combining \eqref{eq-estimate+}, \eqref{eq-estimate-}, and \eqref{eq-estimate-H}, we conclude that
	$$
	\int_{\mathbb R}F(x)\mu_\varepsilon(\mathrm{d}x)
	=\sum_{m=0}^n \varepsilon^{m/2}B_m(F)+R_{n}(F,\varepsilon),
	$$
	with $B_m(F)$ given by \eqref{eq:Bm-structure} with $d_{m,i}=d_m(i)$, $\bar d_{m,i}=\bar d_m(i)$ and
	$$
	|R_{n}(F,\varepsilon)|\le C_n(F)\varepsilon^{(n+1)/2},\quad 0<\varepsilon< \varepsilon_0.
	$$
	This proves the expansion \eqref{eq:inv-expansion}.

\noindent\textbf{Step 3: Derivation of the recursion.} To derive the recursion for $d_{m,i}$ and $\bar d_{m,i}$, it suffices to consider
$F\in C_c^\infty(\mathbb R)$. For such $F$, the functions
$$
G_1(x):=(x-x^3)F^\prime(x),\quad G_2(x):=F^{\prime\prime}(x)
$$
also satisfy the assumptions of part \textup{(i)}, and hence admit expansions of the form \eqref{eq:inv-expansion}.

Since $\mu_\varepsilon$ is the invariant measure for \eqref{SDE-0}, we have
\begin{equation}\label{eq:inv-identity}
	\int_{\mathbb R} (x-x^3)F^\prime(x)\mu_\varepsilon(\mathrm{d}x)
	+\varepsilon\int_{\mathbb R} F^{\prime\prime}(x)\mu_\varepsilon(\mathrm{d}x)=0.
\end{equation}
Substituting the expansions of $G_1$ and $G_2$ into \eqref{eq:inv-identity}, we obtain
\begin{equation}\label{eq-Bm-identity}
	\sum_{m=0}^{n}\varepsilon^{m/2}\Bigl(B_m(G_1)+B_{m-2}(G_2)\Bigr)+R_n^*(F,\varepsilon)=0,
\end{equation}
where $B_{-1}(G_2)=B_{-2}(G_2):=0$ and
$$
|R_n^*(F,\varepsilon)|\le C_n(F)\varepsilon^{(n+1)/2}.
$$
Since \eqref{eq-Bm-identity} holds for all sufficiently small $\varepsilon>0$, it follows that
\begin{equation}\label{m-vs-m-2}
B_m(G_1)+B_{m-2}(G_2)=0,\quad 0\le m\le n.
\end{equation}
More precisely, we compare the coefficients order by order. Set
$$
A_m:=B_m(G_1)+B_{m-2}(G_2),\quad 0\le m\le n.
$$
Then \eqref{eq-Bm-identity} becomes
$$
\sum_{m=0}^n \varepsilon^{m/2}A_m+R_n^*(F,\varepsilon)=0,
\quad
|R_n^*(F,\varepsilon)|\le C_n(F)\varepsilon^{(n+1)/2}.
$$
Letting $\varepsilon\to0$ yields $A_0=0$. Assuming that $A_0=\cdots=A_{m-1}=0$ for some $1\le m\le n$, we divide the above identity by $\varepsilon^{m/2}$ and let $\varepsilon\to0$. Since
$
\varepsilon^{-m/2}R_n^*(F,\varepsilon)\to0
$, we obtain $A_m=0$. Therefore,
$$
B_m(G_1)+B_{m-2}(G_2)=0,\quad 0\le m\le n.
$$
Set
$
b(x):=x-x^3.
$
Then
$$
b(1)=b(-1)=0,\quad b^\prime(\pm1)=-2,\quad b^{\prime\prime}(1)=-6,\quad b^{\prime\prime}(-1)=6,
$$
$$
b^{(3)}(\pm1)=-6,\quad b^{(l)}(\pm1)=0\quad\text{for all }l\ge4.
$$

We first identify the coefficient of $F^{(i)}(1)$ in \eqref{eq-Bm-identity}, where $1\le i\le m$. By Leibniz's rule,
$$
G_1^{(j)}(1)
=\sum_{\ell=0}^j \binom{j}{l} b^{(l)}(1)F^{(j+1-l)}(1).
$$
Since $b(1)=0$ and $b^{(l)}(1)=0$ for all $l\ge4$, the term $F^{(i)}(1)$ can arise only when
$$
j=i,\quad j=i+1,\quad\text{or}\quad j=i+2.
$$
More precisely, its coefficient in $G_1^{(j)}(1)$ is
$$
\binom{i}{1}b^\prime(1)=-2i,\quad
\binom{i+1}{2}b^{\prime\prime}(1)=-3i(i+1),\quad
\binom{i+2}{3}b^{(3)}(1)=-(i+2)(i+1)i.
$$
Therefore, by \eqref{eq:Bm-structure}, the coefficient of $F^{(i)}(1)$ in $B_m(G_1)$ is
$$
-\frac{i}{i!}d_{m,i}
-\frac{3i}{2\,i!}d_{m,i+1}
-\frac{i}{2\,i!}d_{m,i+2}.
$$
On the other hand,
$$
G_2^{(j)}(1)=F^{(j+2)}(1),
$$
so the coefficient of $F^{(i)}(1)$ in $B_{m-2}(G_2)$ is
$$
\frac{1}{2(i-2)!}d_{m-2,i-2}
=\frac{i(i-1)}{2\,i!}d_{m-2,i-2},
$$
with the convention that this term is zero when $i=1$.

Hence, the coefficient of $F^{(i)}(1)$ in
$
B_m(G_1)+B_{m-2}(G_2)
$ 
is
$$
-\frac{i}{i!}d_{m,i}
-\frac{3i}{2i!}d_{m,i+1}
-\frac{i}{2i!}d_{m,i+2}
+\frac{i(i-1)}{2i!}d_{m-2,i-2}.
$$
Since \eqref{m-vs-m-2} holds for arbitrary $F$, the coefficient of $F^{(i)}(1)$ must vanish. We obtain
$$
-2d_{m,i}-3d_{m,i+1}-d_{m,i+2}+(i-1)d_{m-2,i-2}=0,
$$
that is,
$$
d_{m,i}
=\frac{i-1}{2}d_{m-2,i-2}-\frac32 d_{m,i+1}-\frac12 d_{m,i+2}.
$$
The argument at $x=-1$ is identical, except that
$ 
b^{\prime\prime}(-1)=6.
$ 
Thus, we obtain
$$
\bar d_{m,i}
=\frac{i-1}{2}\bar d_{m-2,i-2}+\frac32 \bar d_{m,i+1}-\frac12 \bar d_{m,i+2}.
$$
This proves the recursion \eqref{eq:rec-d}.

It remains to verify the initial values. 
 Since $V$ is even and $\chi_-(x)=\chi_+(-x)$, 
$$
\int_{\mathbb R}\chi_+(x)\mu_\varepsilon(\mathrm{d}x)
=
\int_{\mathbb R}\chi_-(x)\mu_\varepsilon(\mathrm{d}x).
$$
Hence,
$$
d_{m,0}=\bar d_{m,0},\quad 0\le m\le n.
$$
Taking $F\equiv1$ in \eqref{eq:inv-expansion}, we have
$$
1=\sum_{m=0}^n \varepsilon^{m/2}B_m(1)+R_{n,1}(\varepsilon).
$$
By \eqref{eq:Bm-structure},
$$
B_m(1)=\frac12d_{m,0}+\frac12\bar d_{m,0}=d_{m,0}.
$$
Therefore,
$$
d_{0,0}=\bar d_{0,0}=1,\qquad d_{m,0}=\bar d_{m,0}=0\quad\text{for }m\ge1.
$$
Similarly, by choosing $F(x)=x$, we get $d_{1,1}=\bar{d}_{1,1}=0$.

For each fixed $m\geq 1$, the recursion is determined successively by downward induction on $i$. Hence, the families $(d_{m,i})$ and $(\bar d_{m,i})$ are uniquely determined.
\end{proof}

\begin{theorem}\label{thm-consistency} Suppose that $F\in C^{n+1}(\mathbb{R})$ and satisfies \eqref{assump-F}.
	Let $(b_m(F))_{0\leq m\leq n}$ be the long-time limit coefficients defined in Theorem \ref{thm-longtime-intro} with the assumption $\mathbb{P}(\xi_0 > 0) = \mathbb{P}(\xi_0 < 0) = 1/2$. Let $(B_m(F))_{0\leq m\leq n}$  be the coefficients defined in Proposition \ref{prop-invariant-exp}. Then for every $m \in \{0, \dots, n\}$, we have
	\begin{equation*}
		b_m(F) = B_m(F).
	\end{equation*}
\end{theorem}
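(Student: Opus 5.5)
The plan is to compare the two explicit formulas term by term. By Theorem \ref{thm-longtime-intro}(iii) with $\mathbb P(\xi_0>0)=\mathbb P(\xi_0<0)=\frac12$, we have
$$
b_m(F)=\frac12\sum_{i=1}^m\frac{c_{m,i}}{i!}F^{(i)}(1)+\frac12\sum_{i=1}^m\frac{\bar c_{m,i}}{i!}F^{(i)}(-1),\qquad m\ge1,
$$
and $b_0(F)=\frac12F(1)+\frac12F(-1)$ by (i). Proposition \ref{prop-invariant-exp} gives $B_m(F)$ in the same form \eqref{eq:Bm-structure}, with sums starting at $i=0$. Since $d_{0,0}=\bar d_{0,0}=1$ and $d_{m,0}=\bar d_{m,0}=0$ for $m\ge1$, the $i=0$ terms produce exactly $b_0(F)$ when $m=0$ and vanish when $m\ge1$. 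It therefore suffices to prove that $c_{m,i}=d_{m,i}$ and $\bar c_{m,i}=\bar d_{m,i}$ for all $1\le i\le m\le n$.

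I would establish this identity by the same nested induction used in Theorem \ref{thm-longtime-intro}: an outer induction on $m$, and for each fixed $m$ a downward induction on $i$ from $i=m$ to $i=1$. Both families satisfy the identical recursion, namely \eqref{eq-c-n-intro} and \eqref{eq:rec-d}. Both vanish outside the admissible index range, in particular $c_{m,i}=d_{m,i}=0$ for $i>m$. The base values also agree: $c_{1,1}=d_{1,1}=0$ and $c_{2,2}=d_{2,2}=\frac12$.

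The one point needing care is the term $c_{m-2,i-2}$ when $i=2$, which involves $c_{m-2,0}$. The convention in Theorem \ref{thm-longtime-intro} sets $c_{0,0}=1$ and $c_{k,0}=0$ for $k\ge1$, and this matches $d_{0,0}=1$, $d_{k,0}=0$. For $i=1$ the coefficient $\frac{i-1}{2}$ kills that term on both sides. Negative indices are zero in both conventions.

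Under these matching conventions, each right-hand side of the recursion at stage $(m,i)$ involves only $(m-2,i-2)$, $(m,i+1)$ and $(m,i+2)$. All of these were already handled, by the outer hypothesis or by the inner downward hypothesis. Hence $c_{m,i}=d_{m,i}$, and the same argument gives $\bar c_{m,i}=\bar d_{m,i}$.

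The main obstacle is this bookkeeping of boundary conventions, not any analysis. One must confirm that the base cases $c_{1,1},c_{2,2}$ computed dynamically agree with the recursion-generated $d_{1,1},d_{2,2}$. One must also confirm that the recursion \eqref{eq:rec-d} at $(2,2)$ reproduces $\frac12=\frac12 d_{0,0}$. Once the tables coincide, summing against $F^{(i)}(\pm1)/i!$ gives $b_m(F)=B_m(F)$ for every $0\le m\le n$.
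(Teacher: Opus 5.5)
Your proposal is correct and follows the paper's own argument. The families $c_{m,i},\bar c_{m,i}$ and $d_{m,i},\bar d_{m,i}$ satisfy the same recursion with the same boundary data ($c_{0,0}=d_{0,0}=1$, $c_{k,0}=d_{k,0}=0$ for $k\ge1$, and zero for $i>m$ or negative indices), so they coincide; the formulas for $b_m(F)$ and $B_m(F)$ then agree once you note that the $i=0$ terms of $B_m$ vanish for $m\ge1$. Your explicit check of the $c_{m-2,0}$ convention and of the base values $c_{1,1}$, $c_{2,2}$ only spells out bookkeeping that the paper leaves implicit.
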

\begin{proof}
	For $m=0$, the conclusion follows immediately from \eqref{eq-lim-a0}, \eqref{eq:Bm-structure},
	$d_{0,0}=\bar d_{0,0}=1$, and the assumption
	$$
	\mathbb P(\xi_0>0)=\mathbb P(\xi_0<0)=\frac12.
	$$
	Indeed,
	$$
	b_0(F)=\frac12F(1)+\frac12F(-1)=B_0(F).
	$$
	
	Now let $m\in\{1,\dots,n\}$. By Theorem \ref{thm-longtime-intro}, the coefficients
	$c_{m,i}$ and $\bar c_{m,i}$ satisfy
	$$
	c_{m,i}
	=\frac{i-1}{2}c_{m-2,i-2}-\frac32 c_{m,i+1}-\frac12 c_{m,i+2},
	$$
	$$
	\bar c_{m,i}
	=\frac{i-1}{2}\bar c_{m-2,i-2}+\frac32 \bar c_{m,i+1}-\frac12 \bar c_{m,i+2},
	$$
	for $1\le i\le m$, with
	$
	c_{0,0}=\bar c_{0,0}=1,\quad c_{1,1}=\bar c_{1,1}=0
	$,
	and the convention
	$$
	c_{k,j}=\bar c_{k,j}=0
	\quad\text{whenever }k<0,\text{ or } k\geq 1 \text{ and }j\notin\{0,1,\dots,k\}.
	$$
	On the other hand, by Proposition \ref{prop-invariant-exp}, the coefficients
	$d_{m,i}$ and $\bar d_{m,i}$ satisfy exactly the same recursion with the same initial values.
Hence,
	$$
	c_{m,i}=d_{m,i},\quad \bar c_{m,i}=\bar d_{m,i},
	\quad 1\le i\le m,\ 1\le m\le n.
	$$

	Finally, by \eqref{eq-b-n} and \eqref{eq:Bm-structure}, together with
	$$
	\mathbb P(\xi_0>0)=\mathbb P(\xi_0<0)=\frac12,
	$$
	we obtain for every $m\in\{1,\dots,n\}$,
	\begin{align*}
		b_m(F)
		&=\frac12\sum_{i=1}^m \frac{c_{m,i}}{i!}F^{(i)}(1)
		+\frac12\sum_{i=1}^m \frac{\bar c_{m,i}}{i!}F^{(i)}(-1)\\
		&=\frac12\sum_{i=1}^m \frac{d_{m,i}}{i!}F^{(i)}(1)
		+\frac12\sum_{i=1}^m \frac{\bar d_{m,i}}{i!}F^{(i)}(-1)
		=B_m(F).
	\end{align*}
	This completes the proof.
\end{proof}

\section*{Acknowledgements}
The first author acknowledges the support by National Natural Science Foundation of China (No. 12471138, 12571171). 
The second author acknowledges the support by the US Army Research Office, grant W911NF2310230. 

The authors would like to thank Jungkyoung Lee and Jae-Hwan Choi for helpful discussions.

		\bibliographystyle{alpha}
	\bibliography{Expansion}
\end{document}